\documentclass[12pt,a4paper,reqno]{amsart}
\usepackage{amssymb,amsfonts,mathrsfs}
\usepackage[T1]{fontenc}
\usepackage[utf8]{inputenc}
\usepackage[english]{babel}
\usepackage{enumerate}
\usepackage{color}

\advance\textwidth30mm \advance\hoffset-14mm
\advance\textheight30mm \advance\voffset-18mm
\newtheorem{thm}{Theorem}
\newtheorem{lem}{Lemma}
\newtheorem{cor}{Corollary}
\newtheorem{prop}{Proposition}
\newtheorem{conj}{Conjecture}
\theoremstyle{remark}
\newtheorem{rem}{Remark}
\theoremstyle{definition}

\newcommand\sign{\operatorname{sign}}
\newcommand\supp{\operatorname{supp}}
\newcommand\Cdot{{\mskip2mu\cdot\mskip2mu}}
\newcommand\PW{\mathit{PW}}

\newcommand\Ld{\mathcal L^*}
\newcommand\Ai{\operatorname{Ai}}

\renewcommand\Re{\operatorname{Re}}
\renewcommand\Im{\operatorname{Im}}

\begin{document}

\title{Exact Asymptotics of the Multidimensional Nikolskii Constant}
\author{D.\,V.~Gorbachev}
\address{Saint Petersburg State University}
\email{dvgmail@mail.ru}
\date{}

\begin{abstract}
We prove the exact asymptotics of the multidimensional normalized $L^1$ Nikolskii
constant
\[
\Ld(d)=\Bigl(\frac{\pi}{2}+o(1)\Bigr)2^{-d},
\quad d\to\infty.
\]
In addition, for each fixed dimension we obtain asymptotics for the positive zeros
of the extremal function $\varphi_d$, and determine their limiting distribution
as $d\to\infty$.
The lower bound follows from the construction of an admissible function and
its asymptotic analysis. For the upper bound, we represent
$x^{d+1}\varphi_d(x)$ as the product of two solutions of the equation
$u''+V_d(x)u=0$, compare this equation with a Bessel model, and analyze
relative canonical products.
\end{abstract}

\keywords{Nikolskii constant, extremal function, Paley--Wiener space, Bessel
functions, distribution of zeros, asymptotics with respect to dimension}

\subjclass[2020]{Primary 41A17, 41A44; Secondary 30D15, 34A30}

\maketitle

\section{Introduction}

Let $\PW_r^p(\mathbb R^d)$ be the Paley--Wiener class of functions of spherical
exponential type at most~$r$ that belong to $L^p(\mathbb R^d)$. Consider the
extremal problem of finding the exact Nikolskii constant
\begin{equation}\label{eq:Cproblem}
\mathcal C_d^{-1} =\inf\bigl\{\|f\|_{L^1(\mathbb R^d)}\colon
f\in\PW_1^1(\mathbb R^d),\ f(0)=1\bigr\}.
\end{equation}

It was proved in \cite{Da21} that problem~\eqref{eq:Cproblem} has a unique
radial extremal function $\varphi_d(|\Cdot|)$. Its one-dimensional profile is
an even real entire function of exact exponential type~$1$ and has the
canonical product
\[
\varphi_d(z)= \prod_{k=1}^{\infty} \Bigl(1-\frac{z^2}{\tau_{d,k}^2}\Bigr),
\]
where $0<\tau_{d,1}<\tau_{d,2}<\cdots$ are the positive zeros of $\varphi_d$,
and $\tau_{d,k}\sim\pi k$ as $k\to\infty$ for each fixed $d$.

Let
\[
v_d=\frac{\pi^{d/2}}{\Gamma(d/2+1)},\quad \omega_{d-1}=dv_d,
\]
denote, respectively, the volume of the unit ball and the surface area of the
unit sphere in~$\mathbb{R}^{d}$. The normalized Nikolskii constant is
\begin{equation}\label{eq:Lstar}
\Ld(d)=\frac{(2\pi)^d}{v_d}\,\mathcal C_d.
\end{equation}

The known general estimates \cite{Da21} are
\begin{equation}\label{eq:known-bounds}
2^{-d}\le\Ld(d)\le
{}_1F_2\Bigl(\frac d2;\frac d2+1,\frac d2+1;
-\frac{\beta_d^2}{4}\Bigr),
\end{equation}
where $\beta_d$ is the first positive zero of $J_{d/2}$. In particular,
\[
2^{-d}\le \Ld(d)\le\bigl(\sqrt{2/e}\,\bigr)^{d(1+o(1))}.
\]

The main result of this paper is the following.

\begin{thm}\label{thm:main}
As $d\to\infty$,
\begin{equation}\label{eq:main}
2^d\Ld(d)=\frac\pi2\,(1+o(1)).
\end{equation}
\end{thm}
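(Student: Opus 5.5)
We prove \eqref{eq:main} by matching an upper and a lower bound on $\mathcal C_d^{-1}$, after reducing everything to a one-dimensional problem with Bessel weight. For radial $f$ the normalization \eqref{eq:Lstar} reads
\[
\Ld(d)^{-1}=\frac{(2\pi)^d}{v_d}\,\inf\Bigl\{\omega_{d-1}\!\int_0^\infty |g(t)|t^{d-1}\,dt\Bigr\},
\]
where the infimum runs over even entire $g$ of type $1$ with $g(0)=1$. The model case is the function
\[
j_{d/2}(t)=\Gamma(d/2+1)(2/t)^{d/2}J_{d/2}(t),
\]
whose weighted integrals, together with the identity that $\int_0^\infty g\,t^{d-1}dt$ depends only on $g$ at the zeros of $J_{d/2-1}$ (Gauss-type quadrature for the Hankel transform), explain the factor $2^{-d}$. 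The target constant $\pi/2$ must arise from a second-order correction.

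\emph{Lower bound for $\Ld$ (upper bound for $\mathcal C_d^{-1}$).} I would use an admissible test function built from a product of two Bessel-type functions, for example
\[
g(t)=\frac{j_{d/2}(t/2)^2}{1-t^2/\beta^2}\ \text{(suitably normalized)}\quad\text{or}\quad g(t)=\frac{j_{\nu}(t/2)\,j_{\nu+1}(t/2)}{\cdots}.
\]
This mimics the structure in which $x^{d+1}\varphi_d$ is a product of two solutions of $u''+V_du=0$. The steps are:
\begin{enumerate}[(i)]
\item Compute $\int|g|t^{d-1}$ via Bessel orthogonality and the known integrals of products of Bessel functions (Weber--Schafheitlin).
\item Show that the contribution of sign changes beyond the first zero is asymptotically negligible. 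This uses the uniform Debye/Airy asymptotics of $J_\nu(\nu z)$: the mass concentrates in the transition zone $t\approx d$, of width $d^{1/3}$.
\end{enumerate}
Evaluating the resulting expression should give $2^d\Ld\ge \pi/2-o(1)$.

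\emph{Upper bound for $\Ld$.} I would use the extremal $\varphi_d$ itself and its characterization from \cite{Da21}: $\sign\varphi_d$ is orthogonal (with weight $t^{d-1}$) to type-$1$ functions, and the zeros $\tau_{d,k}$ interlace with zeros of a Bessel-like function. The plan has three steps.
\begin{enumerate}[(i)]
\item Write $x^{d+1}\varphi_d(x)=u_1u_2$, where $u_1,u_2$ solve $u''+V_d u=0$ with a potential $V_d$ determined by $\varphi_d$.
\item Compare $V_d$ with the Bessel potential $1-(\nu^2-1/4)/x^2$ through Sturm comparison and Prüfer angles. This yields $\tau_{d,k}=\pi k+c_d+o(1)$, where the shift is governed by the index $\nu\approx d/2$.
\item Express $\varphi_d$ relative to the Bessel product via the relative canonical product $\prod(1-z^2/\tau_{d,k}^2)/(1-z^2/\beta_k^2)$. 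Show that this ratio tends to $1$ uniformly in the region $|z|\lesssim d$ that carries the weighted $L^1$ mass, which gives $\|\varphi_d\|\ge(1-o(1))$ times the model value.
\end{enumerate}

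The main obstacle is step (iii) of the upper bound. One needs uniform-in-$d$ control of the zero shifts $\tau_{d,k}-\beta_k$ near the turning point $k\approx d/2$, where the WKB comparison degenerates. Summing the errors in the relative product must give $o(1)$ rather than $O(1)$, since an $O(1)$ error would be exactly the size of the $\pi/2$ constant. I would attack this first with Airy-uniform Liouville--Green estimates near the turning point combined with a monotonicity (Sturm) argument for $k$ away from it.
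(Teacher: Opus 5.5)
\textbf{Lower bound for $\Ld$.} Your test-function plan has no mechanism that produces the factor $\pi/2$.

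By Plancherel, $j_{d/2}(|x|/2)^2$ has $L^1(\mathbb R^d)$ norm exactly $2^d(2\pi)^d/v_d$, so it gives only the known bound $\Ld(d)\ge 2^{-d}$. (Your opening formula is also inverted: $\Ld(d)^{-1}=v_d(2\pi)^{-d}\mathcal C_d^{-1}$.)

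Dividing by $1-t^2/\beta^2$, with $\beta$ the first zero, makes things worse. In the variable $t=d\sec\psi$, the weighted mass of $j_{d/2}(t/2)^2t^{d-1}$ is asymptotically uniform in $\psi\in(0,\pi/2)$. The extra factor $|1-t^2/\beta^2|^{-1}\approx\cot^2\psi$ is non-integrable at $\psi=0$ down to the turning-point scale $\psi\asymp d^{-1/3}$, so the norm grows by a factor of order $d^{1/3}$.

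The same computation refutes your claim that the mass sits in the transition zone. The region $t\le d+O(d^{1/3})$ contributes $o(1)$; the whole norm comes from the oscillatory bulk $t\in(d,\infty)$, where the sign changes fix the leading constant rather than being negligible. Closed-form Weber--Schafheitlin or orthogonality integrals compute $\int g\,t^{d-1}dt$, not $\int |g|\,t^{d-1}dt$.

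What is missing is the paper's construction. It takes $f_{d,N}=\sum_{n\le N}c_{d,n}j_{d+n+1/2}$, the Fourier transform of $(1-|\xi|^2)^{(d+1)/2}P_N(1-|\xi|^2)$. Its Debye amplitude satisfies $H_N\to H_*(w)=\sqrt{w/(2-w)}$, and $|H_*(1+e^{-2i\psi})|\sqrt{\tan\psi}\equiv 1$. This flat amplitude, together with the oscillation average $\frac1{2\pi}\int_0^{2\pi}|\cos u|\,du=\frac2\pi$ (Lemma~\ref{lem:cos-average-simple}), gives exactly $\frac\pi2\,2^{-d}$.

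\textbf{Upper bound for $\Ld$.} The missing idea is an exact identity that turns the norm into a relative canonical product, after which a one-sided zero comparison suffices.

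The identity comes from three ingredients: the residue identity \eqref{eq:residue}, the precise tail $\tau_{d,n}=\pi(n+d/2)+O_d(n^{-1})$, and Lemma~\ref{lem:relative-derivative}. Together they give $2^d\Ld(d)=(4/\pi)^d/D_d$ with $D_d=\prod_n\lambda_{d,n}^2/\tau_{d,n}^2$. The shift $d/2$ does not come from Sturm comparison. It comes from the spectral gap \eqref{eq:signature-gap}, which forces $q_d=0$, and from matching $|\varphi_d'(\tau_{d,n})|\asymp n^{-2c_d-1}$ against $n^{-d-1}$.

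Next, one uses the inequality $V_d\le V_{d,0}=\frac14-\frac{d^2-1}{4x^2}$, an input from \cite{Go26}, not a consequence of Sturm theory. The correct model is $\sqrt x\,J_{d/2}(x/2)$, $\sqrt x\,Y_{d/2}(x/2)$. Your potential $1-(\nu^2-1/4)/x^2$ has the wrong frequency, because each $u_{d,j}$ carries only every other zero of $\varphi_d$. A Prüfer-phase argument with phases matched at infinity then gives $\tau_{d,n}\le\gamma_{d,n}$, hence $D_d\ge\widetilde D_d$. Only the explicit Bessel product $\widetilde D_d=(2/\pi+o(1))(4/\pi)^d$ remains to be evaluated.

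Your step (iii) instead needs two-sided, uniform closeness of $\varphi_d$ to a Bessel model, and it fails for three reasons.
\begin{enumerate}[(a)]
\item The one-sided inequality $V_d\le V_{d,0}$ cannot deliver two-sided control; that is available only a posteriori, once the theorem is proved.
\item As stated it is false on the real axis. Unless the zero sets coincide, $\prod_k(1-z^2/\tau_{d,k}^2)/(1-z^2/\gamma_{d,k}^2)$ has real zeros and poles, so it cannot be uniformly close to $1$ there.
\item Even granting it, you would still need the asymptotics of the model's $L^1$ norm, which you do not provide.
\end{enumerate}
So the obstacle you single out is genuine on your route and is left unresolved. The identity $2^d\Ld(d)=(4/\pi)^d/D_d$ removes it altogether.
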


We note that the numerical experiments in~\cite{Go26} indicate the existence
of the next term in the asymptotic expansion.

\begin{conj}
There is an absolute constant $c_*>0$ such that
\[
2^d\Ld(d)=\frac\pi2-c_*d^{-1/3}\bigl(1+o(1)\bigr),\quad
c_*\approx0.8.
\]
\end{conj}

In the course of the proof we also obtain an independent result on the limiting
distribution of the positive zeros of the extremal function. Introduce the
discrete measure
\[
\mu_d=\frac1d\sum_{n\ge1}\delta_{\tau_{d,n}/d}.
\]

\begin{thm}\label{thm:limit-measure}
As $d\to\infty$, the measures $\mu_d$ converge weakly on compact subsets of
$(0,\infty)$ to the locally finite measure
\begin{equation}\label{eq:intro-measure}
d\mu_*(t)=
\frac1\pi\,\sqrt{(1-t^{-2})_{+}}\,dt.
\end{equation}
\end{thm}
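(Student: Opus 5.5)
This is a plan, not a proof. The idea is to obtain $\mu_*$ as the WKB (Liouville--Green) density of zeros of solutions of the ODE from the abstract. I would use the representation $x^{d+1}\varphi_d(x)=u_1(x)u_2(x)$, where $u_1,u_2$ are real solutions of
\[
u''+V_d(x)u=0.
\]
With this representation the zeros of $\varphi_d$ on $(0,\infty)$ are exactly the union of the zeros of $u_1$ and $u_2$. I expect the potential to have the Bessel-type form
\[
V_d(x)=1-\frac{\nu_d^2-1/4}{x^2}+(\text{lower order}),\qquad \nu_d\sim d/2 .
\]
This is the form of the equation satisfied by $\sqrt x\,J_{d/2}(x)$, which comes from the optimality (Fourier-side orthogonality) conditions of the extremal problem \eqref{eq:Cproblem}. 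After rescaling $x=dt$ we get
\[
d^{-2}\,V_d(dt)\to 1-\frac{1}{4t^2}.
\]

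Before going on I have to check the normalization. For one Bessel solution, the Sturm comparison and Prüfer-angle argument gives a zero count on $[da,db]$ of
\[
\frac{d}{\pi}\int_a^b\sqrt{\bigl(1-\tfrac{1}{4t^2}\bigr)_+}\,dt+o(d).
\]
There are two factors $u_1,u_2$, so the count doubles. The target density $\frac1\pi\sqrt{1-t^{-2}}$ has turning point $t=1$ rather than $t=1/2$, and no factor $2$. So the effective index must be $\nu_d\sim d$ rather than $d/2$. This is consistent with $x^{d+1}$, the weight $|x|^{d-1}$ of the radial $L^1$ norm, and the fact that $\varphi_d$ interlaces with zeros of $J_{d}$-type functions coming from the structure $\varphi_d\sim$ (kernel)$^2$. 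The first step is therefore to extract from the paper's model the exact asymptotics $V_d(dt)=d^2\bigl(1-t^{-2}+o(1)\bigr)$, locally uniformly on $(0,\infty)$. I also need to fix the zero count so that the union of the zero sets of $u_1,u_2$ yields density $\frac1\pi\sqrt{1-t^{-2}}$. For instance, $u_1,u_2$ may both be Bessel-like of scale $\nu\sim d$ with interlacing zeros, each contributing half. Equivalently, I would use the Prüfer angle of the pair, $\theta=\arg(u_1+iu_2)$, whose derivative is $W/(u_1^2+u_2^2)\approx\sqrt{V_d}$ in the oscillatory region.

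The core estimate runs as follows. For a compact interval $[a,b]\subset(0,\infty)$ I would use the Prüfer transformation $u=\rho\sin\theta$, $u'=\sqrt{V}\rho\cos\theta$, which gives
\[
\theta'=\sqrt V+\frac{V'}{4V}\sin2\theta .
\]
On $[da,db]$ with $a>1$ the correction term integrates to $O(1)$, since $V'/V=O(1/d)$ over a length $O(d)$. The number of zeros of each $u_j$ is then
\[
\frac1\pi\int_{da}^{db}\sqrt{V_d}+O(1)=\frac d\pi\int_a^b\sqrt{1-t^{-2}}\,dt+o(d).
\]
For $b<1$ the potential is negative on $[0,d(1-\varepsilon)]$, so by Sturm comparison each $u_j$ has at most one zero there. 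Near $t=1$ the turning-point layer has width $O(d^{1/3})$ by Airy scaling (consistent with the $d^{-1/3}$ of the Conjecture) and contributes $o(d)$ zeros. Summing the counts, dividing by $d$, and approximating continuous compactly supported test functions by step functions then gives weak convergence on compacts.

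I expect the main obstacle to be the first step. It requires controlling $V_d$ uniformly enough, namely $d^{-2}V_d(dt)\to 1-t^{-2}$ locally uniformly, and identifying how the zeros split between $u_1$ and $u_2$. $V_d$ is defined implicitly through the unknown extremal $\varphi_d$ (for example via $u_1u_2$ and a Wronskian normalization), so its asymptotics must be obtained from the comparison with the Bessel model. I would first bound $V_d-V_d^{\mathrm{Bessel}}$ using the relative canonical product $\prod(1-z^2/\tau_{d,k}^2)/(1-z^2/j_{\nu,k}^2)$, which should be controlled by the extremality together with the known bounds \eqref{eq:known-bounds}.
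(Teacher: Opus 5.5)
\paragraph{The gap: the lower bound on the zero count.}
What is missing is the lower bound $\liminf_{d\to\infty}N_d(t)/d\ge M(t)$, where $N_d(t)=|\{n\colon\tau_{d,n}\le dt\}|$ and $M(t)=\mu_*((0,t])$. Local Sturm or Pr\"ufer counting bounds the number of zeros in $[da,db]$ from below only through a \emph{lower} bound on $V_d$ there. Your plan therefore rests on a two-sided, locally uniform asymptotic for $V_d(dt)$ that you never derive. The route you sketch for it cannot work as stated. The bounds $2^{-d}\le\Ld(d)\le(\sqrt{2/e})^{d(1+o(1))}$ are exponentially apart and cannot fix zero counts to within $o(d)$. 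Controlling the relative canonical product uniformly in $d$ is essentially the zero asymptotics you are trying to prove.

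What is actually available is only the one-sided inequality $V_d(x)\le\frac14-\frac{d^2-1}{4x^2}$, plus $V_d(x)=\frac14+O_d(x^{-2})$ as $x\to\infty$ for fixed $d$. That inequality gives the upper estimate $\limsup_{d\to\infty}N_d(t)/d\le M(t)$ by comparison with constant majorants on a partition of $[d,dt]$. Unlike your equation $\theta'=\sqrt V+\frac{V'}{4V}\sin2\theta$, this needs neither $V_d\in C^1$ nor $V_d>0$.

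\paragraph{The missing idea: anchor the phase at $x=+\infty$.}
Once the phase is anchored at infinity, the same one-sided inequality also gives the lower estimate.
\begin{enumerate}
\item For fixed $d$, first prove the exact tail $\tau_{d,n}=\pi(n+d/2)+O_d(n^{-1})$. The spectral gap $\widehat{|x|^{d-1}\sigma_d}=4a_d/d$ on $(-1,1)$, with $\sigma_d=\sign\varphi_d$, forces $q_d=0$ in $x^{d+1}\varphi_d(x)=B_d(\cos(x+\Theta_d)+q_d)+O_d(x^{-1})$. The identity $\tau_{d,n}^{d+1}\varphi_d'(\tau_{d,n})=2(-1)^na_d$ then pins the shift to $d/2$.
\item This is exactly the tail of the zeros $\gamma_{d,n}$ of $\sqrt x\,(J_{d/2}\pm Y_{d/2})(x/2)$.
\item Write $u_{d,1}=rA_d\cos(P_d-\delta)$, where $A_d\cos P_d$ and $A_d\sin P_d$ are $\sqrt x\,(J_{d/2}\mp Y_{d/2})(x/2)$, with Wronskian $W_0=4/\pi$; treat $u_{d,2}$ the same way with $P_d-\pi/2$. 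Variation of constants gives $\delta'=\Delta_d u^2/(W_0r^2)\ge0$, where $\Delta_d=\frac14-\frac{d^2-1}{4x^2}-V_d\ge0$. The matched tails give $\delta(+\infty)=0$.
\item Hence $\delta\le0$ and $\tau_{d,n}\le\gamma_{d,n}$ for every $n$. So $N_d(t)$ is at least the number of $\gamma_{d,n}\le dt$, which is $dM(t)+o(d)$ by the Debye asymptotics of the Bessel phase.
\end{enumerate}
Without this global comparison, or a genuine two-sided control of $V_d$ that nothing in your plan supplies, you obtain only the $\limsup$ half.

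\paragraph{The normalization is also inconsistent.}
At $x=dt$ the comparison potential is $\frac14(1-t^{-2})+O(d^{-2})$, which is of order one. The model solutions are $\sqrt x\,J_{d/2}(x/2)$ and $\sqrt x\,Y_{d/2}(x/2)$: index $d/2$, as you first guessed, but argument $x/2$. They have turning point $x=d$ and frequency $1/2$. So each factor accounts for $\frac d{2\pi}\int_a^b\sqrt{1-t^{-2}}\,dt+o(d)$ zeros on $[da,db]$. The zeros of $u_{d,1}$ and $u_{d,2}$ are the odd- and even-indexed $\tau_{d,n}$, and together they give exactly the density of $\mu_*$.

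By contrast, $V_d(dt)=d^2(1-t^{-2}+o(1))$ would give $\asymp d^2$ zeros on $[da,db]$. Your per-factor count $\frac d\pi\int_a^b\sqrt{1-t^{-2}}\,dt$, summed over the two factors, is twice the target.
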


The limiting measure already determines the exponential scale of the Nikolskii
constant.

\begin{cor}\label{cor:intro-root-asymptotic}
The following root asymptotics holds:
\[
\Ld(d)^{1/d}\to \frac12.
\]
\end{cor}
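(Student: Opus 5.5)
The corollary has a trivial direction and a substantive one, and I will treat them separately. The lower bound $\Ld(d)^{1/d}\ge \tfrac12\cdot 1$ comes directly from \eqref{eq:known-bounds}: since $2^{-d}\le\Ld(d)$, we get $\liminf\Ld(d)^{1/d}\ge 1/2$. Theorem~\ref{thm:main} would of course give the whole statement at once. The point of the corollary, however, is that the limiting measure of Theorem~\ref{thm:limit-measure} \emph{alone} determines the exponential scale. So I will derive the upper bound $\limsup \Ld(d)^{1/d}\le 1/2$ from $\mu_d\to\mu_*$, using only a mild extra uniform control on the zeros.

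First I express $\Ld(d)$ through $\varphi_d$. Since $\varphi_d(|\Cdot|)$ is extremal with $\varphi_d(0)=1$,
\[
\mathcal C_d^{-1}=\omega_{d-1}\int_0^\infty|\varphi_d(r)|r^{d-1}\,dr .
\]
Hence $\Ld(d)^{-1}=(2\pi)^{-d}d\int_0^\infty|\varphi_d|r^{d-1}dr$, and it suffices to show
\[
\liminf_{d\to\infty}\frac1d\log\int_0^\infty|\varphi_d(r)|r^{d-1}\,dr\ \ge\ \log(4\pi).
\]
Substituting $r=dt$ gives $\frac1d\log\bigl(|\varphi_d(dt)|(dt)^{d}\bigr)=\log d+\log t+U_d(t)$. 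Here
\[
U_d(t)=\int\log|1-t^2/s^2|\,d\mu_d(s)
\]
is the logarithmic potential of $\mu_d$. The factor $d^d$ has to combine with $(2\pi)^{-d}$ and with the $d$-dependence of the potential. Comparing with $\mu_*$ gives $\int_0^{\Lambda}d\mu_*\approx \Lambda/\pi$, so the zeros up to $dt$ number about $dt/\pi$, consistent with exponential type~$1$.

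The key computation is $U_*(t)=\int\log|1-t^2/s^2|\,d\mu_*(s)$. This is divergent as written, so I would instead renormalize: compare $\varphi_d$ with $\cos$-type products, or equivalently use a Jensen-type formula $\log|\varphi_d(iy)|=\int\log(1+y^2/s^2)\,d\mu_d(s)\cdot d$. For the density $\frac1\pi\sqrt{1-t^{-2}}$, I expect $\frac1d\log|\varphi_d(dt)|$ to converge to
\[
\int_1^\infty\log\Bigl|1-\frac{t^2}{s^2}\Bigr|\Bigl(\frac1\pi\sqrt{1-s^{-2}}-\frac1\pi\Bigr)ds
\]
plus the $\log|\cos|$-type contribution, which averages to $-\log 2$ per unit. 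Maximizing $\log t+U_*(t)$ over $t$ then yields the exponent. A direct computation with the substitution $s=\sec\theta$ should give a maximum equal to $\log(4\pi)-\log d$ up to $o(1)$; that is, $\max_t(\dots)=\log 2+\log(2\pi)$. Laplace's principle applied to $\int|\varphi_d(dt)|(dt)^{d-1}d\,dt$ then gives the $\liminf$, because the integral is bounded below by the contribution near the maximizer, once we know $|\varphi_d|$ is not small on a set of proportional measure there.

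The main obstacle is passing from weak convergence on compacts of $(0,\infty)$ to convergence of the potentials. This requires uniform control of the tails near $0$ and $\infty$; $\mu_d$ has no mass below $t\approx1$ asymptotically, and at large $t$ the zeros satisfy $\tau_{d,k}\approx\pi k$. It also requires a lower bound on $|\varphi_d|$ off its zeros, to avoid the cancellation that $|\cdot|$ of an oscillating product permits. I would handle both steps as follows. The tail near infinity I would treat via the exponential-type-$1$ bound $n_d(R)\le R/\pi+O(d)$. The oscillation I would handle by averaging $\log|\varphi_d|$ over intervals of length $\pi$: the upper semicontinuity principle for potentials gives a lower bound in $L^1_{\mathrm{loc}}$, which suffices for the integral.
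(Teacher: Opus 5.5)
Your overall strategy is the one the paper uses: bound $\int_0^\infty|\varphi_d(r)|r^{d-1}\,dr$ from below through the logarithmic potential of $\mu_*$, with the easy direction coming from $\Ld(d)\ge2^{-d}$. As written, however, the argument does not prove the corollary.

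First, the reduction is wrong. Since $\mathcal C_d^{-1}=dv_d\int_0^\infty|\varphi_d(r)|r^{d-1}\,dr$ and $\Ld(d)=(2\pi)^d\mathcal C_d/v_d$, one has
\[
\Ld(d)^{-1}=(2\pi)^{-d}\,d\,v_d^2\int_0^\infty|\varphi_d(r)|\,r^{d-1}\,dr .
\]
The factor $v_d^2=(2\pi e/d)^{d}e^{o(d)}$ that you dropped is exponentially large on the relevant scale. The correct target is
\[
\liminf_{d\to\infty}\frac1d\log\int_0^\infty|\varphi_d(dt)|\,t^{d-1}\,dt\ \ge\ \log2-1,
\]
i.e.\ $\liminf\alpha_d\ge\log2-1$ in the paper's notation. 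Your target $\liminf\frac1d\log\int_0^\infty|\varphi_d(r)|r^{d-1}dr\ge\log(4\pi)$ is trivially true, because the left side grows like $\log d$, so it implies nothing. For the same reason, your claim that the maximum of $\log t+U_*(t)$ equals ``$\log(4\pi)-\log d$'' cannot hold, since the limiting potential does not depend on $d$.

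Second, $U_*(t)$ is not divergent. One has $\log|1-t^2/s^2|=O(s^{-2})$ as $s\to\infty$, and the logarithmic singularity at $s=t$ is integrable, so no renormalization is needed. An extra $\log|\cos|$ factor would in any case contribute only $O(1/d)$ after division by $d$.

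Most importantly, you never carry out the decisive computation: that the relevant supremum is \emph{exactly} $\log2-1$, matching $\Ld(d)\ge2^{-d}$. You only say it ``should'' give a value, and the value you guess is wrong.

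The paper also avoids both obstacles you single out. It works only on $s\in(0,1)$, where $\varphi_d(ds)$ has at most two zeros, and computes explicitly
\[
F_*(s)=\int_1^\infty\log\Bigl(1-\frac{s^2}{t^2}\Bigr)d\mu_*(t)=\sqrt{1-s^2}-1-\log\frac{1+\sqrt{1-s^2}}2 ,
\]
so that $g_*(s)=\log s+F_*(s)$ increases to $\log2-1$ as $s\uparrow1$. It then applies Jensen's inequality (convexity of $\exp$) on $J=[1-2\varepsilon,1-\varepsilon]$ to the exact identity $2a_d=d\int_0^\infty|\varphi_d(x)|x^{d-1}dx$. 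This requires only that the $J$-averages of $\frac1d\log|\varphi_d(ds)|$ converge. That follows from the weak convergence, because the averaged kernel $\mathcal K_J$ is continuous, together with the uniform tail bound from Jensen's formula and the a priori bound $\liminf_d\tau_{d,1}/d\ge(2e)^{-1/2}$. No pointwise lower bound for $|\varphi_d|$ off its zeros and no Laplace principle are needed. Note also that the upper semicontinuity principle you invoke bounds $\limsup\frac1d\log|\varphi_d|$ from \emph{above}, which is the wrong direction here; what you need is convergence of interval averages.
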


Corollary~\ref{cor:intro-root-asymptotic} is much weaker than
Theorem~\ref{thm:main}, but its proof requires only
Theorem~\ref{thm:limit-measure} and an exact identity for the norm of the
extremal function.

The lower and upper estimates in~\eqref{eq:main} are of different nature. To prove the lower bound, we construct a family of admissible radial functions
$f_{d,N}(|\Cdot|)$ of a special form and study the asymptotics of their $L^1$ norms
as $d\to\infty$.

For the upper estimate, we use results from \cite{Go26} and \cite{Gon26}, where
a differential equation for the extremal function $\varphi_d$ was derived
independently. From~\cite{Go26} we need the representation of
$x^{d+1}\varphi_d(x)$ as the product of two solutions of the equation
$u''+V_d(x)u=0$, the Wronskian formula for the corresponding solutions,
a spectral gap, an exact identity for $\varphi_d'(\tau_{d,k})$,
an inequality for $V_d$ comparing it with the Bessel model, and the
equilibrium condition for the zeros.
These results first give the
sharpened asymptotics
\[
\tau_{d,k}=\pi\Bigl(k+\frac d2\Bigr)+O_d(k^{-1}),\quad k\to\infty,
\]
and then the global comparison
\[
\tau_{d,k}\le \gamma_{d,k},\quad k\ge1,
\]
where $0<\gamma_{d,0}<\gamma_{d,1}<\gamma_{d,2}<\ldots$ is the combined
sequence of positive zeros of $J_{d/2}(x/2)\pm Y_{d/2}(x/2)$.
After this, the upper estimate reduces to evaluating the product
\[
\prod_{k=1}^{\infty}
\frac{\pi^2(k+d/2)^2}{\gamma_{d,k}^2}.
\]

\subsection*{Convention on constants}
The letters $c$, $C$, and their indexed variants denote positive constants
which, unless stated otherwise, may change from line to line in estimates. The
indices indicate possible dependence on the corresponding parameters. The
notation $A\asymp B$ means that $cB\le A\le CB$, while
$A\asymp_\lambda B$ means that the corresponding constants may depend on
$\lambda$. The notation $O_\lambda(\Cdot)$ has the analogous meaning.

\section{Lower estimate}

In this section we prove the lower estimate
\begin{equation}\label{eq:lower-second}
\liminf_{d\to\infty}2^d\Ld(d)\ge\frac{\pi}{2}.
\end{equation}

\subsection{An admissible function}
We use the binomial expansion
\begin{equation}\label{eq:binomial-series}
(1-u)^{-1/2}
=
\sum_{n=0}^{\infty} b_n u^n,
\quad
b_n=\frac{(1/2)_n}{n!}>0,
\quad |u|<1.
\end{equation}
For a fixed integer $N\ge0$ and $x\in \mathbb{R}^{d}$, define the polynomial
\[
P_N(u)=\sum_{n=0}^N b_nu^n
\]
and the radial function
\begin{equation}\label{eq:f-lower}
f_{d,N}(|x|)
=
\frac1{M}
\int_{|\xi|\le1}
e^{i\langle x,\xi\rangle}(1-|\xi|^2)^{(d+1)/2}P_N(1-|\xi|^2)\,d\xi,
\end{equation}
where $M$ is chosen so that $f_{d,N}(0)=1$. Then $f_{d,N}(|\Cdot|)$ has
spherical exponential type at most~$1$.

Let
\[
j_\nu(t)
=
\Gamma(\nu+1)\Bigl(\frac2t\Bigr)^\nu J_\nu(t), \quad j_\nu(0)=1,
\]
be the normalized Bessel function. By the formula in
\cite[App.~B.5]{Gr08}, for $\alpha>-1$ we have
\[
\int_{|\xi|\le1}
e^{i\langle x,\xi\rangle}(1-|\xi|^2)^\alpha\,d\xi
=
\pi^{d/2}\,
\frac{\Gamma(\alpha+1)}
{\Gamma(d/2+\alpha+1)}\,
j_{d/2+\alpha}(|x|).
\]
Hence, from \eqref{eq:f-lower}, if we put
\[
M_{d,n}
=
\int_{|\xi|\le1}
(1-|\xi|^2)^{(d+1)/2+n}\,d\xi
=
\pi^{d/2}\,
\frac{\Gamma((d+3)/2+n)}
{\Gamma(d+3/2+n)},
\]
we obtain the representation
\begin{equation}\label{eq:f-Bessel-series}
f_{d,N}(t)
=
\sum_{n=0}^Nc_{d,n}\,j_{d+n+1/2}(t),\quad
c_{d,n}
=
\frac{b_nM_{d,n}}{\sum_{k=0}^Nb_kM_{d,k}}.
\end{equation}
The standard Bessel asymptotics \cite[10.17.3]{DLMF} gives
\[
j_{d+n+1/2}(t)=O_{d,n}(t^{-d-n-1}), \quad t\to+\infty,
\]
so $f_{d,N}(|\Cdot|)\in\PW_1^1(\mathbb R^d)$ and hence it is admissible in
problem~\eqref{eq:Cproblem}.

For fixed $N$, uniformly for $0\le n\le N$, we have
\[
\frac{M_{d,n}}{M_{d,0}}
=
\frac{((d+3)/2)_n}{(d+3/2)_n}
=
2^{-n}\bigl(1+O_N(d^{-1})\bigr),
\]
where $(a)_n$ denotes the Pochhammer symbol \cite[5.2.4]{DLMF}. Hence, from
\eqref{eq:f-Bessel-series}, with
\[
c_n=\frac{b_n2^{-n}}{\sum_{k=0}^Nb_k2^{-k}},
\]
we get
\begin{equation}\label{eq:c-limit-simple}
c_{d,n}=c_n+O_N(d^{-1}), \quad 0\le n\le N.
\end{equation}

For what follows it is convenient to introduce the function
\begin{equation}\label{eq:HN-simple}
H_N(w)
=
\sum_{n=0}^Nc_nw^{n+1/2}
=
\frac{w^{1/2}P_N(w/2)}{P_N(1/2)},
\end{equation}
where the branch of $w^{1/2}$ is chosen continuously on the arc
\[
w=1+e^{-2i\beta},
\quad 0<\beta<\frac{\pi}{2}.
\]

\subsection{Uniform Debye--Olver estimates}
Put
\[
Q_d=d^d\Bigl(\frac2e\Bigr)^d.
\]

\begin{lem}\label{lem:uniform-Bessel-simple}
Let $A\subset[1/2,\infty)$ be a finite set. Then, as $d\to\infty$, the
following estimates hold uniformly for $a\in A$:

\begin{enumerate}[\rm (i)]
\item in the region before the turning point, $0\le t\le d$,
\begin{equation}\label{eq:Bessel-subcritical-simple}
\frac1{Q_d}
\int_0^d|j_{d+a}(t)|t^{d-1}\,dt
=O_A(d^{-1/2}).
\end{equation}

\item if $t=d\sec\beta$, $0<\beta<\pi/2$, then
\begin{equation}\label{eq:Bessel-majorant-simple}
Q_d^{-1}|j_{d+a}(d\sec\beta)| (d\sec\beta)^{d} \tan\beta \le
C_A\sqrt{\tan\beta}.
\end{equation}

\item on every compact set $K\subset(0,\pi/2)$,
\begin{equation}\label{eq:Debye-j-simple}
j_{d+a}(d\sec\beta)
=
\frac2{\sqrt{\tan\beta}}\,
\frac{(2\cos\beta)^{d+a}}{e^{d}}\,
\Bigl\{
\cos\Bigl(
d(\tan\beta-\beta)-a\beta-\frac\pi4
\Bigr)
+O_{A,K}(d^{-1})
\Bigr\}.
\end{equation}
\end{enumerate}
\end{lem}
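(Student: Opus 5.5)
All three parts come from the explicit identity
\[
t^{d}\,j_{d+a}(t)=\Gamma(\nu+1)\,2^{\nu}\,t^{-a}J_\nu(t),\qquad \nu=d+a .
\]
Since $A$ is finite, every step only has to be uniform in $a$ over a fixed finite set, with $\nu=d+a\to\infty$. Stirling's formula together with $(\nu/d)^\nu=e^{a}(1+O_A(d^{-1}))$ gives
\[
\frac{\Gamma(\nu+1)2^\nu}{Q_d}=\sqrt{2\pi d}\,(2d)^{a}\bigl(1+O_A(d^{-1})\bigr).
\]
So every statement becomes a statement about $\sqrt d\,(d/t)^a J_\nu(t)$ at the scale $t\asymp d$. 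I will do (iii) first, then (ii), then (i).

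\emph{Part (iii).} I apply Debye's expansion $J_\nu(\nu\sec\beta')=\sqrt{2/(\pi\nu\tan\beta')}\,\{\cos(\nu(\tan\beta'-\beta')-\pi/4)+O(\nu^{-1})\}$, which is uniform for $\beta'$ in compacts of $(0,\pi/2)$, with $\beta'$ defined by $\nu\sec\beta'=d\sec\beta$. Put $\Phi(\nu)=\sqrt{t^2-\nu^2}-\nu\arccos(\nu/t)$ at fixed $t$. Then $\Phi'(\nu)=-\arccos(\nu/t)$ and $\Phi''(\nu)=O(1/t)$, so
\[
\nu(\tan\beta'-\beta')=d(\tan\beta-\beta)-a\beta+O_{A,K}(d^{-1}),
\]
and also $\tan\beta'=\tan\beta\,(1+O(d^{-1}))$. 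For the prefactor, $t^{-a}(2d)^a=(2\cos\beta)^a$ and $t^{-d}=(\cos\beta/d)^d$. Multiplying $\sqrt{2\pi d}\cdot\sqrt{2/(\pi d\tan\beta)}=2/\sqrt{\tan\beta}$ gives exactly the amplitude $\frac{2}{\sqrt{\tan\beta}}(2\cos\beta)^{d+a}e^{-d}$ in \eqref{eq:Debye-j-simple}.

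\emph{Part (ii).} By the reduction above, the left side of \eqref{eq:Bessel-majorant-simple} is at most $C_A\sqrt d\,(\cos\beta)^{a}|J_\nu(d\sec\beta)|\tan\beta$. Since $(\cos\beta)^a\le1$, it suffices to prove
\[
\sqrt d\,|J_\nu(d\sec\beta)|\le C_A(\tan\beta)^{-1/2}.
\]
I split at $\tan\beta=d^{-1/3}$.
\begin{itemize}
\item For $\tan\beta\le d^{-1/3}$, I use Landau's bound $|J_\nu(x)|\le c\,\nu^{-1/3}$. It gives $\sqrt d\,|J_\nu|\le Cd^{1/6}\le C(\tan\beta)^{-1/2}$.
\item For $\tan\beta\ge d^{-1/3}$, I have $x^2-\nu^2=d^2\tan^2\beta-2ad-a^2\ge\frac12d^2\tan^2\beta$ once $d$ is large, because $d^{-1/3}\gg d^{-1/2}$. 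A Krasikov-type uniform bound $|J_\nu(x)|\le C(x^2-\nu^2)^{-1/4}$, valid for such $x$, then gives the claim.
\end{itemize}
Alternatively, both ranges follow from Olver's uniform Airy-type expansion. I expect this step to be the main obstacle: the estimate has to hold uniformly up to the turning point and for all $\beta$ including $\beta\to\pi/2$, so I must check that the cited global bounds really cover the full range (for large $x$, the elementary bound $|J_\nu(x)|\le C x^{-1/2}$ also works).

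\emph{Part (i).} Here $0\le t\le d<\nu<j_{\nu,1}$, so $J_\nu(t)>0$. The integrand $Q_d^{-1}|j_{d+a}(t)|t^{d-1}$ becomes $C_A\sqrt d\,(d/t)^aJ_\nu(t)\,t^{-1}$. I split $[0,d]$ into three ranges.
\begin{itemize}
\item On $[0,d/2]$, the bound $J_\nu(t)\le(t/2)^\nu/\Gamma(\nu+1)$ shows that the contribution is exponentially small. The factor $t^{\nu-a-1}$ removes any singularity at $0$.
\item On $[d/2,d-d^{1/3}]$, the Debye bound $J_\nu(\nu\,\mathrm{sech}\,\alpha)\le e^{-\nu(\alpha-\tanh\alpha)}/\sqrt{2\pi\nu\tanh\alpha}$ applies.
\item Near the turning point, the Airy approximation $J_\nu(t)\approx(2/\nu)^{1/3}\mathrm{Ai}\bigl((2/\nu)^{1/3}(\nu-t)\bigr)$ holds (again Olver, or Landau's bound for the size).
\end{itemize}
The last two ranges show that $\int_{d/2}^{d}J_\nu(t)\,dt=O_A(1)$: the function has height $\nu^{-1/3}$ over a window of width $\nu^{1/3}$ and decays exponentially below it. Since $t^{-1}(d/t)^a\le C_A/d$ on $[d/2,d]$, the whole integral is $O_A(\sqrt d\cdot d^{-1})=O_A(d^{-1/2})$, which is \eqref{eq:Bessel-subcritical-simple}.
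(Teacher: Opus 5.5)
Your proposal is correct and follows essentially the paper's route. It uses the same Stirling reduction to $\sqrt d\,(d/t)^aJ_{d+a}(t)$, the same split at $\tan\beta=d^{-1/3}$ in (ii), the same reparametrization $(d+a)\sec\beta'=d\sec\beta$ followed by Debye's expansion in (iii), and in (i) the same trivial range $[0,d/2]$, where you use $|j_\nu|\le1$, followed by the turning-point region. The only difference is that you quote explicit global inequalities where the paper extracts the corresponding bounds from Olver's uniform Airy expansion: Landau's bound $|J_\nu(x)|\le b\,\nu^{-1/3}$ and the Nicholson-type majorant $J_\nu^2(x)+Y_\nu^2(x)\le 2/(\pi\sqrt{x^2-\nu^2})$ for $x>\nu\ge1/2$. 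These two bounds cover the whole range $0<\beta<\pi/2$, so the obstacle you anticipated in (ii) does not arise.

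One caution for (i): make sure the Debye-type bound you cite for $J_\nu(\nu\,\mathrm{sech}\,\alpha)$ keeps the factor $(2\pi\nu\tanh\alpha)^{-1/2}$, or take it from Olver's expansion. The cruder Kapteyn-type bound $J_\nu(\nu\,\mathrm{sech}\,\alpha)\le e^{-\nu(\alpha-\tanh\alpha)}$ only gives $\int_{d/2}^{d}J_\nu(t)\,dt=O(d^{1/3})$, hence $O(d^{-1/6})$ rather than $O(d^{-1/2})$ in part (i).
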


\begin{proof}
To prove \textup{(i)} and \textup{(ii)}, we use Olver's uniform expansion for
$J_\nu(\nu z)$ as $\nu\to\infty$ in terms of the Airy function, uniform for
$z>0$ and, in particular, valid through the turning point $z=1$; see
\cite[10.20.4]{DLMF}, \cite[Sec.~11.10]{Ol97}. Together with the standard
estimates for the Airy function, this gives the following two consequences,
uniformly for $a\in A$.

(1) There are constants $C_A,c_A>0$ such that
\begin{equation}\label{eq:J-subcritical-bound}
|J_{d+a}(dz)|\le C_A d^{-1/3} \exp\,\bigl\{-c_A d(1-z)^{3/2}\bigr\}, \quad
\frac12\le z\le1.
\end{equation}
Indeed, if $s=dz/(d+a)$, then $s\le1$ and
\[
1-s=\frac{d(1-z)+a}{d+a}\ge c_A(1-z).
\]
For the variable $\zeta=\zeta(s)$ that occurs in Olver's uniform expansion, we
have $\zeta(s)^{3/2}\asymp(1-s)^{3/2}$ for $1/3\le s\le1$, and therefore
\eqref{eq:J-subcritical-bound} follows from the uniform expansion and the
estimate
\[
|\!\Ai(u)|\le C\exp(-c u^{3/2}), \quad u\ge0.
\]

(2) For $0<\beta<\pi/2$,
\begin{equation}\label{eq:J-global-majorant}
|J_{d+a}(d\sec\beta)|
\le
C_A d^{-1/2}(\tan\beta)^{-1/2}.
\end{equation}
Indeed, if $\tan\beta\le d^{-1/3}$, then
\[
|d\sec\beta-(d+a)|\le C_A d^{1/3},
\]
and the transition estimate gives
\[
|J_{d+a}(d\sec\beta)|
\le C_A d^{-1/3}
\le C_A d^{-1/2}(\tan\beta)^{-1/2}.
\]
If $\tan\beta>d^{-1/3}$, then for all sufficiently large $d$,
\[
(d\sec\beta)^2-(d+a)^2
\ge c_A d^2\tan^2\beta,
\]
and the oscillatory part of the same uniform expansion gives
\[
|J_{d+a}(d\sec\beta)|
\le
C_A\bigl((d\sec\beta)^2-(d+a)^2\bigr)^{-1/4},
\]
which again implies \eqref{eq:J-global-majorant}.

Uniformity in $a\in A$ follows from the finiteness of $A$ and the relation
$d+a\asymp_A d$.

(i) On $0\le t\le d/2$ we use $|j_{d+a}(t)|\le1$:
\[
\frac1{Q_d}\int_0^{d/2}|j_{d+a}(t)|t^{d-1}\,dt
\le
\frac1d\Bigl(\frac e4\Bigr)^d.
\]
On $d/2<t\le d$, put $t=dz$. From the definition of $j_{d+a}$ and Stirling's
formula we obtain
\begin{align*}
\frac1{Q_d}\int_{d/2}^d|j_{d+a}(t)|t^{d-1}\,dt&= 2^ae^d\Gamma(d+a+1)d^{-d-a}
\int_{1/2}^1 z^{-a-1}|J_{d+a}(dz)|\,dz\\ &\le C_A d^{1/2}
\int_{1/2}^1|J_{d+a}(dz)|\,dz.
\end{align*}
By \eqref{eq:J-subcritical-bound} and the substitution
$u=d^{2/3}(1-z)$, the last expression is at most
\[
C_A d^{1/2}d^{-1}
\int_0^\infty e^{-cu^{3/2}}\,du
=
O_A(d^{-1/2}),
\]
which proves \eqref{eq:Bessel-subcritical-simple}.

(ii) Next, by Stirling's formula, uniformly for $a\in A$,
\[
2^{d+a}\Gamma(d+a+1)(d\sec\beta)^{-d-a}
\le
C_A d^{1/2}\,
\frac{(2\cos\beta)^{d+a}}{e^d}.
\]
Indeed,
\[
\Bigl(1+\frac ad\Bigr)^{d+a}e^{-a}=O_A(1).
\]
Therefore, \eqref{eq:J-global-majorant} gives
\[
|j_{d+a}(d\sec\beta)|
\le
\frac{C_A}{\sqrt{\tan\beta}}\,
\frac{(2\cos\beta)^{d+a}}{e^d}.
\]
Multiplying by $Q_d^{-1}(d\sec\beta)^d\tan\beta$, we find
\[
Q_d^{-1}|j_{d+a}(d\sec\beta)|
(d\sec\beta)^d\tan\beta
\le
C_A\,2^a(\cos\beta)^a\sqrt{\tan\beta}
\le
C_A\sqrt{\tan\beta},
\]
which proves \eqref{eq:Bessel-majorant-simple}.

(iii) Finally, let $\beta$ belong to a compact set $K\subset(0,\pi/2)$. For all
sufficiently large $d$, define $\beta_{d,a}\in(0,\pi/2)$ by
\[
(d+a)\sec\beta_{d,a}=d\sec\beta.
\]
Then, uniformly for $a\in A$ and $\beta\in K$,
\[
\beta_{d,a}
=
\beta-\frac{a}{d}\cot\beta+O_{A,K}(d^{-2})
\]
and
\[
(d+a)(\tan\beta_{d,a}-\beta_{d,a})
=
d(\tan\beta-\beta)-a\beta+O_{A,K}(d^{-1}).
\]
Moreover, the exact equality
\[
\cos\beta_{d,a}
=
\Bigl(1+\frac ad\Bigr)\cos\beta
\]
implies
\[
\frac1{\sqrt{\tan\beta_{d,a}}}
=
\frac1{\sqrt{\tan\beta}}
\bigl(1+O_{A,K}(d^{-1})\bigr)
\]
and
\begin{align*}
\Bigl(\frac{2\cos\beta_{d,a}}e\Bigr)^{d+a}
&=
\frac{(2\cos\beta)^{d+a}}{e^d}
\exp\,\Bigl\{
(d+a)\log\Bigl(1+\frac ad\Bigr)-a
\Bigr\} \\
&=
\frac{(2\cos\beta)^{d+a}}{e^d}
\bigl(1+O_A(d^{-1})\bigr).
\end{align*}
Since for $\beta\in K$ the quantities $\beta_{d,a}$ remain in some compact set
$K'\subset (0,\pi/2)$ uniformly for $a\in A$, the Debye formula with one
remainder term \cite[10.19.6]{DLMF}, together with the preceding expansions,
gives \eqref{eq:Debye-j-simple}.
\end{proof}

The next lemma is used to average rapid oscillations.

\begin{lem}\label{lem:cos-average-simple}
Let $0<\beta_{0}<\beta_{1}<\pi/2$, $\phi(\beta)=\tan\beta-\beta$, and
\begin{equation}\label{h-props}
h\in C^1([\beta_0,\beta_1]),\quad h(\beta)\ne0\quad \text{on
$[\beta_0,\beta_1]$}.
\end{equation}
Then, as $d\to\infty$,
\begin{equation}\label{eq:cos-average-simple}
\int_{\beta_{0}}^{\beta_{1}} \bigl| \Re\,\{e^{id\phi(\beta)}h(\beta)\} \bigr|\,d\beta \to \frac2\pi
\int_{\beta_{0}}^{\beta_{1}} |h(\beta)|\,d\beta.
\end{equation}
\end{lem}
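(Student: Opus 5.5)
Write $h(\beta)=|h(\beta)|e^{i\theta(\beta)}$. Since $h$ is $C^1$ and never vanishes on $[\beta_0,\beta_1]$, both $|h|$ and a continuous branch of $\theta$ are $C^1$ there. Then
\[
\bigl|\Re\{e^{id\phi(\beta)}h(\beta)\}\bigr|=|h(\beta)|\,\bigl|\cos\bigl(d\phi(\beta)+\theta(\beta)\bigr)\bigr|.
\]
The plan is to substitute $s=\phi(\beta)$, which is legitimate because $\phi'(\beta)=\tan^2\beta>0$ on $[\beta_0,\beta_1]$. This turns the integral into
\[
\int_{s_0}^{s_1}g(s)\,\bigl|\cos\bigl(ds+\psi(s)\bigr)\bigr|\,ds,
\qquad g=\frac{|h|}{\phi'}\circ\phi^{-1},\quad \psi=\theta\circ\phi^{-1},
\]
with $g,\psi\in C^1$ and $s_j=\phi(\beta_j)$. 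The target is $\frac{2}{\pi}\int g\,ds$, which equals $\frac2\pi\int|h|\,d\beta$. The constant $\frac2\pi$ is exactly the mean of $|\cos|$ over a period.

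Next split $[s_0,s_1]$ into about $d$ consecutive intervals $I_k$ of length $\pi/d$, plus one leftover piece of length $<\pi/d$. The leftover piece contributes $O(1/d)$. On each $I_k$, freeze $g$ and $\psi$ at the left endpoint $s_k$. The error from this freezing is
\[
\int_{I_k}\Bigl(|g(s)-g(s_k)|+|g(s_k)|\,|\psi(s)-\psi(s_k)|\Bigr)ds=O(d^{-2}),
\]
using $\bigl||\cos x|-|\cos y|\bigr|\le|x-y|$ together with the $C^1$ bounds on $g$ and $\psi$. Summed over $O(d)$ intervals, the total freezing error is $O(d^{-1})$.

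With the coefficients frozen, the substitution $u=ds+\psi(s_k)$ gives
\[
\int_{I_k}\bigl|\cos\bigl(ds+\psi(s_k)\bigr)\bigr|\,ds=\frac1d\int_0^\pi|\cos u|\,du=\frac2d=\frac2\pi|I_k|,
\]
since the interval of integration in $u$ is a full period of $|\cos|$ and the phase shift does not matter. Summing $g(s_k)\cdot\frac{2}{\pi}|I_k|$ over $k$ gives a Riemann sum that converges to $\frac2\pi\int g$. Undoing the substitution yields \eqref{eq:cos-average-simple}.

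The only point needing care is the $C^1$ regularity of the phase $\theta$. This is exactly where the hypothesis $h\ne0$ in \eqref{h-props} is used: it allows a continuous logarithm of $h$ on the interval. If one prefers to avoid the phase altogether, an alternative route is to expand $|\cos x|$ in its Fourier series $\frac2\pi+\sum_{m\ge1}a_m\cos 2mx$ and apply the Riemann--Lebesgue lemma termwise. That route works, but the blockwise argument above is cleaner and gives an $O(d^{-1})$ rate.
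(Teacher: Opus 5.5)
Your proof is correct, but it takes a different route from the paper's. Both start from the same polar form $h=|h|e^{i\theta}$.

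\emph{The paper's route.} It expands $|\cos u|=\frac2\pi+\sum_{k\ne0}a_ke^{2iku}$ with $a_k=O(k^{-2})$, which turns the oscillatory part into $\sum_{k\ne0}a_kH_{d,k}$, where $H_{d,k}=\int_{\beta_0}^{\beta_1}|h|e^{2ik\theta}e^{2ikd\phi}\,d\beta$. Each $H_{d,k}=O_k(d^{-1})$ by one integration by parts, using $\phi'=\tan^2\beta\ge c>0$ and the $C^1$ regularity of $|h|e^{2ik\theta}$. The bound $|H_{d,k}|\le\int|h|$ together with $\sum|a_k|<\infty$ then justifies passing to the limit termwise. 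This is essentially the ``alternative route'' you mention at the end, with integration by parts in place of Riemann--Lebesgue. In that version, the hypothesis $h\ne0$ is what makes $|h|e^{2ik\theta}$ a $C^1$ function for the integration by parts.

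\emph{Your route.} You straighten the phase via $s=\phi(\beta)$, cut into cells of length $\pi/d$, and freeze coefficients using $\bigl||\cos x|-|\cos y|\bigr|\le|x-y|$. Each cell is then exactly one period of $|\cos|$. This is more elementary, avoids any interchange of limit and series, and gives an explicit $O(d^{-1})$ rate directly.

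It is also more robust than you suggest. Instead of freezing $|h|$ and $\theta$ separately, you can freeze the complex function $H=(h/\phi')\circ\phi^{-1}$ itself, using
\[
\bigl||\Re(e^{ids}H(s))|-|\Re(e^{ids}H(s_k))|\bigr|\le|H(s)-H(s_k)|
\]
and $\int_{I_k}|\Re(e^{ids}H(s_k))|\,ds=\frac2\pi|H(s_k)|\,|I_k|$. Then no phase and no hypothesis $h\ne0$ are needed. Mere continuity of $h$ suffices for the limit, and a Lipschitz $h$ gives the rate.

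The paper's Fourier approach, in turn, is the standard template. It carries over verbatim to averaging any periodic function with absolutely summable Fourier coefficients, whereas your freezing step relies on the Lipschitz property of $|\cos|$.
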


\begin{proof}
Choose a continuous branch $\theta=\arg h$ on $[\beta_0,\beta_1]$. From
\eqref{h-props} we have $\theta\in C^1([\beta_0,\beta_1])$, since
$\theta'=\Im(h'/h)$. Hence $h=|h|e^{i\theta}$.

We have
\[
\bigl|
\Re\,\{e^{id\phi(\beta)}h(\beta)\}
\bigr|
=
|h(\beta)|\,
\bigl|
\cos(d\phi(\beta)+\theta(\beta))
\bigr|.
\]
Since
\[
|\!\cos u|
=
\frac2\pi+\sum_{k\ne0}a_ke^{2iku},
\quad
a_k=O(k^{-2}),
\]
we get
\[
\begin{aligned}
\int_{\beta_{0}}^{\beta_{1}} \bigl| \Re\,\{e^{id\phi(\beta)}h(\beta)\} \bigr|\,d\beta &= \frac2\pi
\int_{\beta_{0}}^{\beta_{1}} |h(\beta)|\,d\beta +\sum_{k\ne0}a_k H_{d,k},
\end{aligned}
\]
where
\[
H_{d,k}= \int_{\beta_{0}}^{\beta_{1}} |h(\beta)|e^{2ik\theta(\beta)} e^{2ikd\phi(\beta)}\,d\beta.
\]
For each $k\ne0$, the function $|h|e^{2ik\theta}\in C^1([\beta_0,\beta_1])$,
while $\phi'(\beta)=\tan^2\beta$ is bounded away from zero on the interval in
question. Thus one integration by parts gives
\[
H_{d,k}=O_k(d^{-1}),
\quad d\to\infty.
\]
Since
\[
|H_{d,k}|
\le
\int_{\beta_{0}}^{\beta_{1}} |h(\beta)|\,d\beta,\quad
\sum_{k\ne0}|a_k|<\infty,
\]
we may pass to the limit under the sum, and $\sum_{k\ne0}a_k H_{d,k}\to
0$. This completes the proof of \eqref{eq:cos-average-simple}.
\end{proof}

\subsection{Asymptotics of the $L^{1}$ norm}
Denote
\[
\mathcal N_{d,N}
=
\|f_{d,N}(|\Cdot|)\|_1
=
dv_d\int_0^\infty |f_{d,N}(t)|t^{d-1}\,dt .
\]

\begin{lem}\label{lem:Debye-norm-simple}
For each fixed $N$, as $d\to\infty$,
\begin{equation}\label{eq:Debye-norm-simple}
\frac{\mathcal N_{d,N}}{dv_dQ_d}
\to
\frac4\pi\,\mathcal I(H_N),
\end{equation}
where
\[
\mathcal I(H_N)
=
\int_0^{\pi/2}
|H_N(1+e^{-2i\beta})|
\sqrt{\tan\beta}\,d\beta.
\]
\end{lem}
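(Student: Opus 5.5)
The plan is to pass to polar coordinates and split the radial integral at the turning point $t=d$. By \eqref{eq:f-Bessel-series},
\[
\frac{\mathcal N_{d,N}}{dv_dQ_d}=\frac1{Q_d}\int_0^\infty\Bigl|\sum_{n=0}^Nc_{d,n}\,j_{d+n+1/2}(t)\Bigr|t^{d-1}\,dt .
\]
All estimates will come from Lemma~\ref{lem:uniform-Bessel-simple} with the finite set $A=\{n+1/2\colon 0\le n\le N\}$. By \eqref{eq:c-limit-simple} the coefficients $c_{d,n}$ are bounded uniformly in $d$.

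\emph{Region $0\le t\le d$.} Apply the triangle inequality to the sum. Then \eqref{eq:Bessel-subcritical-simple} shows that this part is $O_N(d^{-1/2})$, so it does not contribute to the limit.

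\emph{Region $t>d$: reduction to an oscillatory integral.} Substitute $t=d\sec\beta$ with $0<\beta<\pi/2$. Since $t^{d-1}\,dt=(d\sec\beta)^d\tan\beta\,d\beta$, the integrand becomes
\[
F_d(\beta)=Q_d^{-1}\Bigl|\sum_n c_{d,n}j_{d+n+1/2}(d\sec\beta)\Bigr|(d\sec\beta)^d\tan\beta .
\]
By \eqref{eq:Bessel-majorant-simple}, $F_d(\beta)\le C_N\sqrt{\tan\beta}$, and this majorant is integrable on $(0,\pi/2)$; near $\pi/2$ it behaves like $(\pi/2-\beta)^{-1/2}$. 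Hence for any $\varepsilon>0$ we may discard fixed small neighbourhoods of $0$ and $\pi/2$, at a cost that is small uniformly in $d$.

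On a compact set $K\subset(0,\pi/2)$ we use \eqref{eq:Debye-j-simple}. The factors combine as
\[
Q_d^{-1}\frac{(2\cos\beta)^d}{e^d}(d\sec\beta)^d=1 .
\]
Together with the identity $2\cos\beta\,e^{-i\beta}=1+e^{-2i\beta}$, this gives, uniformly on $K$,
\[
F_d(\beta)=2\sqrt{\tan\beta}\,\Bigl|\Re\bigl\{e^{id\phi(\beta)}e^{-i\pi/4}H_N(1+e^{-2i\beta})\bigr\}\Bigr|+O_{N,K}(d^{-1}).
\]
Here $c_{d,n}$ has been replaced by $c_n$ at a cost $O_N(d^{-1})$, and the branch of $w^{1/2}$ is the continuous one fixed in \eqref{eq:HN-simple}.

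\emph{Averaging and assembly.} Now apply Lemma~\ref{lem:cos-average-simple} with
\[
h(\beta)=2\sqrt{\tan\beta}\,e^{-i\pi/4}H_N(1+e^{-2i\beta}).
\]
This $h$ is real-analytic on $K$, so it is $C^1$ there. The lemma yields the limit
\[
\frac2\pi\int_K 2\sqrt{\tan\beta}\,|H_N(1+e^{-2i\beta})|\,d\beta .
\]
Letting $K$ exhaust $(0,\pi/2)$ then gives $\frac4\pi\,\mathcal I(H_N)$, which is \eqref{eq:Debye-norm-simple}.

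The main obstacle is the hypothesis $h\ne0$ in Lemma~\ref{lem:cos-average-simple}. The factor $w^{1/2}$ does not vanish on the arc, since $w=1+e^{-2i\beta}\ne0$ there. However, I cannot exclude zeros of $P_N(w/2)$ on the arc $|w/2|=\cos\beta$. My first approach is to note that $\beta\mapsto P_N\bigl((1+e^{-2i\beta})/2\bigr)$ is a trigonometric polynomial that is not identically zero, because its value tends to $1$ as $\beta\to\pi/2$. It therefore has finitely many zeros in $K$. I would remove $\delta$-neighbourhoods of these zeros and bound their contribution by the majorant $C_N\sqrt{\tan\beta}$, which gives $O(\delta)$ uniformly in $d$. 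On the remaining finitely many closed intervals $h$ does not vanish, so the lemma applies on each of them, and finally one lets $\delta\to0$.
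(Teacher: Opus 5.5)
Your proof is correct and follows the paper's argument step for step. The part $0\le t\le d$ is handled by Lemma~\ref{lem:uniform-Bessel-simple}\,(i), the ends of $(0,\pi/2)$ by the majorant \eqref{eq:Bessel-majorant-simple}, and on compacts the Debye formula together with Lemma~\ref{lem:cos-average-simple} gives the limit.

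The only difference is the step you flagged as the main obstacle, and there the paper shows that the obstacle does not arise. Since $b_n/b_{n+1}=(2n+2)/(2n+1)$, the Enestr\"om--Kakeya theorem places every zero of $P_N$ in $|z|\ge 2N/(2N-1)>1$. On the arc, however, $|w/2|=\cos\beta<1$. So $h$ has no zeros on $(0,\pi/2)$, and Lemma~\ref{lem:cos-average-simple} applies at once on the single interval $[\delta,\pi/2-\delta]$.

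Your workaround is also valid: you excise $\delta$-neighbourhoods of the finitely many zeros of the nonzero trigonometric polynomial $P_N\bigl((1+e^{-2i\beta})/2\bigr)$ in $K$, and control them by the majorant, which is bounded on $K$ uniformly in $d$. This uses nothing about the coefficients $b_n$ beyond $P_N(0)\ne0$, so it is more robust. The price is an extra limiting step $\delta\to0$, which the paper's argument avoids.
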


\begin{proof}
From \eqref{eq:f-Bessel-series} and Lemma~\ref{lem:uniform-Bessel-simple}\,(i),
we obtain
\begin{equation}\label{eq:below-d-simple}
\frac1{Q_d}\int_0^d
|f_{d,N}(t)|t^{d-1}\,dt\le
\sum_{n=0}^N c_{d,n}\,
\frac1{Q_d}\int_0^d
|j_{d+n+1/2}(t)|t^{d-1}\,dt= o_N(1),
\end{equation}
where we used $c_{d,n}>0$ and $\sum_{n=0}^N c_{d,n}=1$.

Let $t=d\sec\beta$. On every compact set
$K\subset(0,\pi/2)$, from \eqref{eq:Debye-j-simple},
\eqref{eq:c-limit-simple}, and the finiteness of the sum in
\eqref{eq:f-Bessel-series}, we obtain
\[
f_{d,N}(d\sec\beta)
=
\frac2{\sqrt{\tan\beta}}
\Bigl(\frac{2\cos\beta}{e}\Bigr)^d\,
\Bigl\{
\Re\bigl[
e^{i(d(\tan\beta-\beta)-\pi/4)}
H_N(1+e^{-2i\beta})
\bigr]
+o_N(1)
\Bigr\},
\]
uniformly on $K$. Here we used the identity
\[
2\cos\beta\,e^{-i\beta}=1+e^{-2i\beta}.
\]
Also,
\[
\frac{
(d\sec\beta)^{d}\tan\beta}
{Q_d}
\Bigl(\frac{2\cos\beta}{e}\Bigr)^d
=
\tan\beta.
\]

Fix $0<\delta<\pi/4$ and put
\[
h(\beta)
=
\sqrt{\tan\beta}\,
e^{-i\pi/4}H_N(1+e^{-2i\beta}).
\]
The function $h$ belongs to $C^1([\delta,\pi/2-\delta])$ and does not vanish on
this interval. Indeed, for $N=0$ this is obvious, while for $N\ge1$ the
Enestr\"om--Kakeya theorem \cite[Corollary~8.3.5]{RS02} implies that all zeros
of $P_N$ satisfy
\[
|z|\ge \min_{0\le n<N}\frac{b_n}{b_{n+1}}
=\frac{2N}{2N-1}>1,
\]
whereas $|(1+e^{-2i\beta})/2|<1$.

Applying Lemma~\ref{lem:cos-average-simple} to $h$ and using the uniformity of
the remainder $o_N(1)$ on $[\delta,\pi/2-\delta]$, we obtain
\begin{equation}\label{eq:compact-norm-simple}
\frac1{Q_d}
\int_{d\sec\delta}^{d\csc\delta}
|f_{d,N}(t)|t^{d-1}\,dt
\to
\frac4\pi
\int_\delta^{\pi/2-\delta}
|H_N(1+e^{-2i\beta})|
\sqrt{\tan\beta}\,d\beta.
\end{equation}

To pass to the integral over $(0,\pi/2)$, we use \eqref{eq:f-Bessel-series} and
Lemma~\ref{lem:uniform-Bessel-simple}\,(ii):
\begin{equation}\label{eq:global-majorant-simple}
Q_d^{-1} |f_{d,N}(d\sec\beta)| (d\sec\beta)^{d}\tan\beta \le
C_N\sqrt{\tan\beta}, \quad 0<\beta<\frac{\pi}{2}.
\end{equation}
The function $\sqrt{\tan\beta}$ is integrable on $(0,\pi/2)$. Therefore,
\eqref{eq:global-majorant-simple} shows that the contributions from
$(0,\delta)$ and $(\pi/2-\delta,\pi/2)$ tend to zero uniformly in $d$ as
$\delta\downarrow0$. Together with \eqref{eq:compact-norm-simple} and
\eqref{eq:below-d-simple}, this gives \eqref{eq:Debye-norm-simple}.
\end{proof}

\subsection{The limit as $N\to\infty$}
For every fixed $0<\beta<\pi/2$, we have
\[
\Bigl|\frac{1+e^{-2i\beta}}2\Bigr|=\cos\beta<1.
\]
It follows from the binomial series \eqref{eq:binomial-series} and
\eqref{eq:HN-simple} that, as $N\to \infty$,
\begin{equation}\label{eq:HN-limit-simple}
H_N(w)\to
H_*(w)=
\sqrt{\frac{w}{2-w}},
\quad
w=1+e^{-2i\beta}.
\end{equation}
Here we also used $P_N(1/2)\to\sqrt2$.

Moreover,
\[
|P_N(w/2)|
\le P_N(|w|/2)
\le(1-|w|/2)^{-1/2}
=(1-\cos\beta)^{-1/2}.
\]
Since $P_N(1/2)\ge1$ and $|w|=2\cos\beta$, we obtain
\begin{equation}\label{eq:HN-majorant-simple}
|H_N(1+e^{-2i\beta})|\sqrt{\tan\beta}
\le
C\frac{\sqrt{\sin\beta}}
{\sqrt{1-\cos\beta}}
\le C\beta^{-1/2}.
\end{equation}
The right-hand side is integrable on $(0,\pi/2)$.

Finally,
\[
\frac{1+e^{-2i\beta}}{1-e^{-2i\beta}}
=-i\cot\beta,
\]
and therefore
\begin{equation}\label{eq:magic-simple}
|H_*(1+e^{-2i\beta})|\sqrt{\tan\beta}=1.
\end{equation}
By \eqref{eq:HN-limit-simple}, \eqref{eq:HN-majorant-simple},
\eqref{eq:magic-simple}, and the dominated convergence theorem,
\begin{equation}\label{eq:I-HN-limit-simple}
\mathcal I(H_N)\to\frac{\pi}{2}.
\end{equation}

\subsection{Completion of the proof}
Since $f_{d,N}(|\Cdot|)$ is admissible in \eqref{eq:Cproblem},
\[
\Ld(d)
\ge
\frac{(2\pi)^d}{v_d\mathcal N_{d,N}}.
\]
By Stirling's formula, as $d\to \infty$,
\[
dv_d^2Q_d
=
\frac{(4\pi)^d}{\pi}
\bigl(1+O(d^{-1})\bigr).
\]
On the other hand, by Lemma~\ref{lem:Debye-norm-simple}, for every fixed $N$,
\[
\frac{\mathcal N_{d,N}}{dv_dQ_d}
\to
\frac4\pi\,\mathcal I(H_N).
\]
Hence
\[
\liminf_{d\to\infty}2^d\Ld(d)
\ge
\frac{\pi^2}{4\,\mathcal I(H_N)}.
\]
Letting $N\to\infty$ and using \eqref{eq:I-HN-limit-simple}, we obtain the
required lower estimate
\[
\liminf_{d\to\infty}2^d\Ld(d)
\ge
\frac{\pi}{2}.
\]

\begin{rem}
A more precise lower estimate can be obtained by replacing the factor
$P_N(1-|\xi|^2)$ in the definition of the admissible function
\eqref{eq:f-lower} by
\[
\frac{1}{\sqrt{1-(1-\lambda d^{-1/3})(1-|\xi|^2)}},
\]
where $\lambda>0$ is fixed and $d$ is sufficiently large. In this case, the
main contribution to the corresponding Bessel expansion comes from indices
$n\asymp d^{1/3}$, and a uniform analysis near the turning point gives
\[
2^d\Ld(d)\ge
\frac{\pi}{2}-\widetilde c\,d^{-1/3}\bigl(1+o(1)\bigr),
\]
where the constant $\widetilde c>0$ is determined by the choice of $\lambda$.
Numerical optimization in $\lambda$ gives $\widetilde c\approx0.8$. A proof of
this refinement requires a substantially more complicated analysis of the
transition region and is therefore not included here.
\end{rem}

\section{Upper estimate in Theorem~\ref{thm:main}}

We use the results of~\cite{Go26}. Put
\begin{equation}\label{eq:ad}
a_d=\frac1{2v_d\mathcal C_d}.
\end{equation}
It follows from this and~\eqref{eq:Lstar} that
\begin{equation}\label{eq:L-via-a}
\Ld(d)=
\frac{2^{d-1}\Gamma(d/2+1)^2}{a_d}.
\end{equation}

\begin{prop}[{\cite[Corollary~1, formula (63), Sec.~8.2]{Go26}}]
For every $d\ge1$, there exist a real function $V_d\in C((0,\infty))$ and two
real linearly independent solutions $u_{d,1}$, $u_{d,2}$ of the equation
\[
u''+V_d(x)u=0
\]
on $(0,\infty)$ such that
\begin{equation}\label{u-u-varphi}
u_{d,1}(x)u_{d,2}(x)=x^{d+1}\varphi_d(x)
\end{equation}
and the Wronskian is
\[
W(u_{d,1},u_{d,2})= u_{d,1}u_{d,2}' -u_{d,1}'u_{d,2}= 2a_d.
\]
The positive zeros of $u_{d,1}$ and $u_{d,2}$ are
$\tau_{d,1},\tau_{d,3},\ldots$ and $\tau_{d,2},\tau_{d,4},\ldots$,
respectively. Moreover,
\begin{align}
\label{eq:V-bound}
V_d(x)&\le
\frac14-\frac{d^2-1}{4x^2},\quad x>0,\\
\label{eq:V-infty}
V_d(x)&=\frac14+O_d(x^{-2}),\quad x\to +\infty.
\end{align}
\end{prop}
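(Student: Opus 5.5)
This proposition is imported from \cite{Go26}, so here I only outline how I would reconstruct it from the extremality of $\varphi_d$. Put $F(x)=x^{d+1}\varphi_d(x)$. The key remark is that a factorization $F=u_{d,1}u_{d,2}$ with constant Wronskian $W=2a_d$ is forced by the formula
\[
\Bigl(\log\frac{u_{d,2}}{u_{d,1}}\Bigr)'=\frac{W}{F}.
\]
So I would \emph{define}
\[
u_{d,1,2}(x)=\sqrt{|F(x)|}\,\exp\Bigl(\mp a_d\int^x\frac{ds}{F(s)}\Bigr),
\]
with principal values across the zeros and signs fixed so that the result is real. I would then check that these two functions extend smoothly through the $\tau_{d,k}$, with $u_{d,1}$ vanishing exactly at the odd zeros and $u_{d,2}$ exactly at the even ones.

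Near a simple zero $\tau$ of $F$, the factor $\exp(\mp a_d\int dx/F)$ behaves like $|x-\tau|^{\mp a_d/F'(\tau)}$. Hence the factorization works exactly when the following identity holds:
\[
\tau_{d,k}^{d+1}\,\varphi_d'(\tau_{d,k})=(-1)^k\,2a_d,\quad k\ge1.
\]
In that case one factor acquires $|x-\tau|^{1/2}$ and the other $|x-\tau|^{-1/2}$, which combine with $\sqrt{|F|}$ into a simple zero and a nonvanishing factor, alternating in $k$.

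\textbf{First main step: the identity for $\varphi_d'(\tau_{d,k})$.} I would derive it from the $L^1$ extremality conditions. The function $\sign\varphi_d(|x|)$ annihilates every $g\in\PW_1^1(\mathbb R^d)$ with $g(0)=0$, which gives a Gauss-type quadrature formula for radial $g$ with nodes at the sign changes $\tau_{d,k}$. Testing this formula on the functions $g_k(t)=\varphi_d(t)\,t^2/(t^2-\tau_{d,k}^2)$, suitably normalized, should give weights expressed through $\tau_{d,k}^{d-1}\varphi_d'(\tau_{d,k})$. The normalization $a_d=1/(2v_d\mathcal C_d)$ enters through $\|\varphi_d\|_1=\mathcal C_d^{-1}$, and this should make all weights equal to $2a_d$ in absolute value.

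\textbf{Second step: the potential and its continuity.} I would set
\[
V_d=-\frac{u_{d,1}''}{u_{d,1}}=-\frac{u_{d,2}''}{u_{d,2}};
\]
the two quotients agree because $W$ is constant. A direct computation gives
\[
V_d=\frac{F'^2-4a_d^2}{4F^2}-\frac{F''}{2F}.
\]
Continuity at $\tau_{d,k}$ requires $u''(\tau)=0$ for the factor that vanishes there, equivalently a second-order matching condition on $F$ at its zeros. I would try to obtain this from a second quadrature-type identity (differentiating the extremality relation in the node position), or else show directly that $u_{d,j}$ is $C^2$ via the explicit local expansion.

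\textbf{Asymptotics and the main obstacle.} The behaviour \eqref{eq:V-infty} should follow from $\varphi_d(x)\approx c\,x^{-(d+1)}\cos(x-\theta)$, which is consistent with $\tau_{d,k}\sim\pi k$, inserted into the formula for $V_d$: the leading terms give $V_d=\tfrac14+O_d(x^{-2})$. I expect \eqref{eq:V-bound} to be the main obstacle. The quantity $\tfrac14-(d^2-1)/(4x^2)$ is the potential of the Bessel model $\sqrt x\,(J_{d/2}\pm iY_{d/2})(x/2)$, whose product is $x\,(J_{d/2}^2+Y_{d/2}^2)(x/2)$. To prove the bound I would first try to show that
\[
\frac14-\frac{d^2-1}{4x^2}-V_d\ge0
\]
by writing it as $\rho''/\rho$ (or a similar nonnegative expression) for the amplitude $\rho=\sqrt{u_{d,1}^2+u_{d,2}^2}$. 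If that fails, I would try a Pr\"ufer-type argument that uses the differential equation for $\varphi_d$ from \cite{Go26,Gon26}, which is inherited from the Fourier-side support condition $\supp\widehat{\varphi_d}\subset[-1,1]$.
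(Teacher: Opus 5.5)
The paper gives no proof of this proposition; it imports it from \cite{Go26}, where $V_d$ comes together with the differential equation for $\varphi_d$.

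\textbf{What works.} Your algebraic reduction is correct and is a different, softer route. Use the principal-value convention and flip the sign after the appropriate zeros. Then $u_{d,1,2}=\sqrt{|F|}\exp(\mp a_d\int^x ds/F)$ are real-analytic on $(0,\infty)$ exactly when $F'(\tau_{d,k})=(-1)^k2a_d$, that is, when \eqref{eq:residue} holds. From this, \eqref{u-u-varphi}, $W=2a_d$ and the odd/even splitting of the zeros follow. There is no extra ``second-order matching condition''. Indeed, $W'=0$ gives $u_{d,1}''u_{d,2}=u_{d,1}u_{d,2}''$, and the two zero sets are disjoint, so $V_d$ is automatically real-analytic.

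\textbf{The residue identity.} Your derivation of \eqref{eq:residue} does not work as written. Putting $g_k=\varphi_d t^2/(t^2-\tau_{d,k}^2)$ into the orthogonality relation only gives $\int_0^\infty|\varphi_d(t)|\,t^{d+1}(t^2-\tau_{d,k}^2)^{-1}\,dt=0$, which contains no $\varphi_d'(\tau_{d,k})$. The missing step is an integration by parts that turns $\sigma_d$ into point masses $2(-1)^j$ at its jumps. Apply $\int_0^\infty\sigma_d(t)g(t)t^{d-1}\,dt=\frac{2a_d}{d}\,g(0)$, which follows from \eqref{eq:signature-gap}, to $g=t^{1-d}(t^dk)'=dk+tk'$ with $k(t)=\varphi_d(t)\tau_{d,n}^2/(\tau_{d,n}^2-t^2)$. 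Since $k(\tau_{d,j})=0$ for $j\ne n$, $k(\tau_{d,n})=-\tau_{d,n}\varphi_d'(\tau_{d,n})/2$ and $k(0)=1$, this gives $(-1)^n\tau_{d,n}^{d+1}\varphi_d'(\tau_{d,n})=2a_d$.

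\textbf{The genuine gap: \eqref{eq:V-bound}.} In your construction $V_d=\frac{F'^2-4a_d^2}{4F^2}-\frac{F''}{2F}$ is determined by $\varphi_d$ alone. So \eqref{eq:V-bound} is a deep property of the extremal function, not of the factorization. It is exactly the input the paper later uses to get $\tau_{d,k}\le\gamma_{d,k}$, and hence the upper bound in Theorem~\ref{thm:main}. You only list things to try. The $\rho''/\rho$ idea has no mechanism behind it: for $\rho^2=u_{d,1}^2+u_{d,2}^2$ the Ermakov--Pinney identity gives $V_d=4a_d^2\rho^{-4}-\rho''/\rho$, and nothing makes $V_{d,0}-V_d$ sign-definite. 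You would need the explicit differential equation for $\varphi_d$ from \cite{Go26,Gon26}.

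\textbf{Also a gap: \eqref{eq:V-infty}.} It does not follow from leading-order asymptotics, for two reasons.
First, $x^{d+1}\varphi_d(x)\approx B\cos(x+\Theta)$ is not available a priori. In the paper, \eqref{eq:phi-pure-cos} is deduced \emph{from} \eqref{eq:V-infty} and the spectral gap.
Second, suppose you grant $F=B\cos(x+\Theta)+E$ with $E,E',E''=O(x^{-1})$. Then $V_d-\frac14=N/(4F^2)$ with $N=F'^2-4a_d^2-2FF''-F^2=B^2-4a_d^2+O(x^{-1})$. This forces $B=2a_d$, and it still gives no uniform $O(x^{-2})$ bound near the zeros of $F$. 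There you would need $N$ to vanish to second order with an $O(x^{-2})$ coefficient.
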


\begin{prop}[{\cite[Sec.~8.1]{Go26}}]
For every $n\ge1$, the identities
\begin{equation}\label{eq:residue}
\tau_{d,n}^{d+1}\varphi_d'(\tau_{d,n})
=2(-1)^n a_d
\end{equation}
and
\begin{equation}\label{eq:zero-equilibrium}
a_d
=
\tau_{d,n}^d
\prod_{k\ge1,\;k\ne n}
\Bigl|1-\frac{\tau_{d,n}^2}{\tau_{d,k}^2}\Bigr|
\end{equation}
hold.
\end{prop}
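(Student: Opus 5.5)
The plan is to derive both identities directly from the preceding proposition (the factorization \eqref{u-u-varphi} together with the Wronskian $W(u_{d,1},u_{d,2})=2a_d$) and from the canonical product for $\varphi_d$. No further analytic input should be needed.

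\emph{Identity \eqref{eq:residue}.} Fix $n\ge1$ and differentiate \eqref{u-u-varphi} at $x=\tau_{d,n}$. Since $\varphi_d(\tau_{d,n})=0$, the derivative of $x^{d+1}\varphi_d(x)$ at this point is $\tau_{d,n}^{d+1}\varphi_d'(\tau_{d,n})$. By the product rule, the same derivative equals $u_{d,1}'u_{d,2}+u_{d,1}u_{d,2}'$ evaluated at $\tau_{d,n}$. We then split according to the parity of $n$.
\begin{itemize}
\item If $n$ is odd, then $u_{d,1}(\tau_{d,n})=0$. The derivative reduces to $u_{d,1}'u_{d,2}$, while the Wronskian at this point reads $-u_{d,1}'u_{d,2}=2a_d$. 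Hence $\tau_{d,n}^{d+1}\varphi_d'(\tau_{d,n})=-2a_d$.
\item If $n$ is even, then $u_{d,2}(\tau_{d,n})=0$. The derivative reduces to $u_{d,1}u_{d,2}'$, which is exactly the Wronskian $2a_d$.
\end{itemize}
Together the two cases give $2(-1)^na_d$, which is \eqref{eq:residue}. The interlacing of zeros between $u_{d,1}$ and $u_{d,2}$ stated in the proposition guarantees that exactly one factor vanishes at each $\tau_{d,n}$. Linear independence and the nonzero Wronskian guarantee that the zeros are simple and that the other factor is nonzero there.

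\emph{Identity \eqref{eq:zero-equilibrium}.} Since $\tau_{d,k}\sim\pi k$, the series $\sum_k\tau_{d,k}^{-2}$ converges. Hence the canonical product for $\varphi_d$ converges locally uniformly on $\mathbb C$ and may be differentiated termwise, with the usual logarithmic-derivative argument. At the simple zero $\tau_{d,n}$, only the term in which the $n$-th factor is differentiated survives. The derivative of $1-z^2/\tau_{d,n}^2$ at $z=\tau_{d,n}$ equals $-2/\tau_{d,n}$, so
\[
\varphi_d'(\tau_{d,n})=-\frac{2}{\tau_{d,n}}\prod_{k\ne n}\Bigl(1-\frac{\tau_{d,n}^2}{\tau_{d,k}^2}\Bigr).
\]
Take absolute values in \eqref{eq:residue} and recall $a_d>0$ by \eqref{eq:ad}. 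This yields $2\tau_{d,n}^{d}\prod_{k\ne n}|1-\tau_{d,n}^2/\tau_{d,k}^2|=2a_d$, which is \eqref{eq:zero-equilibrium}.

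The only point requiring any care is the justification of termwise differentiation, and this is standard given the growth $\tau_{d,k}\sim\pi k$. The sign bookkeeping is then consistent automatically: the product over $k\ne n$ contains exactly $n-1$ negative factors, which matches the sign $(-1)^n$ obtained in \eqref{eq:residue}.
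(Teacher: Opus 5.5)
Your derivation is correct. At $\tau_{d,n}$ the derivative of $u_{d,1}u_{d,2}$ is $u_{d,1}'u_{d,2}=-W$ for odd $n$ and $u_{d,1}u_{d,2}'=W$ for even $n$. This gives \eqref{eq:residue} with the right sign. Taking moduli and using $\varphi_d'(\tau_{d,n})=-(2/\tau_{d,n})\prod_{k\ne n}(1-\tau_{d,n}^2/\tau_{d,k}^2)$ then gives \eqref{eq:zero-equilibrium}. For this last step you do not even need termwise differentiation of the full product: write $\varphi_d(z)=(1-z^2/\tau_{d,n}^2)\psi_n(z)$ with $\psi_n$ entire and apply the product rule to two factors.

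Your route differs from the paper's. The paper gives no argument here and imports this proposition from \cite[Sec.~8.1]{Go26}, listing it as a separate input alongside the Wronskian formula. What your route buys is the observation that, inside this paper, the proposition is redundant. Given \eqref{u-u-varphi} and the stated assignment of zeros to $u_{d,1}$ and $u_{d,2}$, evaluating the constant Wronskian at any positive zero shows that \eqref{eq:residue} is \emph{equivalent} to the normalization $W(u_{d,1},u_{d,2})=2a_d$. Likewise, \eqref{eq:zero-equilibrium} is just the modulus of \eqref{eq:residue} read through the canonical product.

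The price is that this is a deduction between two imported results, not an independent proof. In \cite{Go26} the identity sits in Sec.~8.1, before the construction of $u_{d,1},u_{d,2}$ in Sec.~8.2. It is therefore presumably obtained there directly from the extremal problem, and the constant $2a_d$ in the Wronskian may well be fixed by your computation run in the opposite direction. The citation route keeps \eqref{eq:residue} as a property of $\varphi_d$ that does not depend on the ODE machinery. Your route is nonetheless fully valid in the present context, since the preceding proposition is taken as given.
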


Put
\[
\sigma_d(x)=\sign\varphi_d(x)
\]
and denote the one-dimensional Fourier transform by
\[
\widehat f(t)=\int_{\mathbb R}f(x)e^{-itx}\,dx.
\]

\begin{prop}[{\cite[Lemma~1]{Go26}}]
In $\mathcal S'(\mathbb R)$, one has
\begin{equation}\label{eq:signature-gap}
\widehat{|x|^{d-1}\sigma_d}=\frac{4a_d}{d}\quad \text{on $(-1,1)$}.
\end{equation}
\end{prop}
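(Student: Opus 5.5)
The plan is to derive \eqref{eq:signature-gap} from the first-order optimality (Lagrange) condition for the $L^1$ problem \eqref{eq:Cproblem}, restricted to radial competitors, and then read the resulting linear identity as a statement about the Fourier transform.

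\emph{Step 1 (variational identity).} Let $g$ be an even real entire function of exponential type at most $1$ such that $g(|\Cdot|)\in\PW_1^1(\mathbb R^d)$, i.e.\ $\int_0^\infty|g(t)|t^{d-1}\,dt<\infty$. For real $\varepsilon$, the function $\varphi_d+\varepsilon(g-g(0)\varphi_d)$ is admissible in \eqref{eq:Cproblem}. The zero set of the nontrivial entire function $\varphi_d$ on $(0,\infty)$ is discrete, hence of measure zero. Therefore
\[
\varepsilon\mapsto\int_0^\infty|\varphi_d+\varepsilon h|\,t^{d-1}\,dt,\qquad h=g-g(0)\varphi_d,
\]
is differentiable at $\varepsilon=0$, with derivative $\int_0^\infty h\,\sigma_d\,t^{d-1}\,dt$; this follows from dominated convergence, using $\bigl||a+\varepsilon b|-|a|\bigr|\le|\varepsilon||b|$. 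Minimality of $\varphi_d$ forces this derivative to vanish. Using $\int_0^\infty\varphi_d\sigma_d t^{d-1}\,dt=\|\varphi_d\|_1/\omega_{d-1}=\mathcal C_d^{-1}/(dv_d)$, we get
\[
\int_0^\infty g(t)\,\sigma_d(t)\,t^{d-1}\,dt=\frac{g(0)}{dv_d\,\mathcal C_d}=\frac{2a_d}{d}\,g(0),
\]
by \eqref{eq:ad}. Extending evenly to $\mathbb R$ doubles the left side, which gives
\[
\int_{\mathbb R}g(x)\,|x|^{d-1}\sigma_d(x)\,dx=\frac{4a_d}{d}\,g(0).
\]
For odd $g$ the left-hand side is zero, and we will only use even test functions anyway.

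\emph{Step 2 (test functions).} Let $\psi\in C_c^\infty(-1,1)$ be even, and put $g=\widehat\psi$ (for odd $\psi$ both sides of \eqref{eq:signature-gap} tested against $\psi$ vanish by parity, since $|x|^{d-1}\sigma_d$ is even). Then $g$ is an even Schwartz function. By Paley--Wiener, it extends to an entire function of exponential type strictly less than $1$. Its rapid decay gives $\int|g||x|^{d-1}<\infty$. We need $g(|\Cdot|)\in\PW_1^1(\mathbb R^d)$, namely that the radial extension of an even one-dimensional entire function of type $\le1$ has spherical type $\le1$ in $\mathbb R^d$. This holds because $g(|x|)=G(x_1^2+\dots+x_d^2)$ with $G$ entire, and $|g(\sqrt{z_1^2+\dots+z_d^2})|$ is controlled by $e^{|\Im\sqrt{\,\cdot\,}|}\le e^{|\Im z|}$, which gives spherical type $\le1$. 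Alternatively, one can use the Hankel transform description of radial Paley--Wiener functions.

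\emph{Step 3 (conclusion).} Let $T=|x|^{d-1}\sigma_d$, a tempered distribution (polynomially bounded). Then
\[
\langle\widehat T,\psi\rangle=\langle T,\widehat\psi\rangle=\frac{4a_d}{d}\,\widehat\psi(0)=\frac{4a_d}{d}\int\psi,
\]
so $\widehat T=4a_d/d$ on $(-1,1)$.

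The main obstacle is the justification in Steps 1--2: that differentiation under the integral is legitimate, and that the chosen competitors are genuinely admissible, i.e.\ lie in $\PW_1^1(\mathbb R^d)$ of spherical type $\le1$. The measure-zero zero set and the Schwartz decay of $\widehat\psi$ are the key facts making both work.
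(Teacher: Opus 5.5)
Your proof is correct. The paper gives no proof of this proposition and simply cites \cite[Lemma~1]{Go26}. Your argument is the standard first-variation one: perturb along the radial admissible family $\varphi_d+\varepsilon\bigl(g-g(0)\varphi_d\bigr)$, use that the zero set of $\varphi_d$ has measure zero, then test with $g=\widehat\psi$. The constant $4a_d/d$ agrees with the identity $2a_d=d\int_0^\infty|\varphi_d(x)|x^{d-1}\,dx$ that the paper records later.

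Two details deserve one line each in a write-up.

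\emph{The inequality $|\Im w|\le|\Im z|$ when $w^2=z_1^2+\dots+z_d^2$.} Write $z=x+iy$ and $w=a+ib$. Then $a^2-b^2=|x|^2-|y|^2$ and $|ab|=|\langle x,y\rangle|\le|x||y|$. If $|b|>|y|$, these give $|a|>|x|$, and hence $|ab|>|x||y|$, a contradiction.

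\emph{Complex-valued $\psi$.} Your Step~1 uses real $g$, so for complex $\psi$ apply the real case to $\Re\psi$ and $\Im\psi$ separately.
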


The following important consequence follows immediately from
\eqref{eq:V-bound}, where we use standard facts about Bessel functions
\cite{Wa66}.

\begin{cor}\label{cor:Bessel-majorant}
For all $x>0$,
\begin{equation}\label{eq:Bessel-majorant}
V_d(x)\le V_{d,0}(x)=\frac14-\frac{d^2-1}{4x^2},
\end{equation}
where the potential $V_{d,0}$ corresponds to the Bessel equation in normal
form
\[
y''+\Bigl(\frac14-\frac{d^2-1}{4x^2}\Bigr)y=0
\]
on $(0,\infty)$. Linearly independent solutions of this equation are
\begin{equation}\label{eq:j-y}
y_{d,1}(x)=\sqrt{x}\,J_{d/2}(x/2),
\quad
y_{d,2}(x)=\sqrt{x}\,Y_{d/2}(x/2),
\end{equation}
and their Wronskian is
\begin{equation}\label{eq:jy-W}
W(y_{d,1},y_{d,2})=\frac2\pi.
\end{equation}
\end{cor}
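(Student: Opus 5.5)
The inequality~\eqref{eq:Bessel-majorant} is just~\eqref{eq:V-bound}, so only the claims about the Bessel model need proof. I would verify them by direct computation, using standard facts from~\cite{Wa66}.

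\emph{The functions~\eqref{eq:j-y} solve the model equation.} Let $Z_\nu$ be any cylinder function of order $\nu=d/2$. Then $w(s)=Z_\nu(s)$ satisfies
\[
s^2w''+sw'+(s^2-\nu^2)w=0 .
\]
Put $y(x)=\sqrt{x}\,Z_\nu(x/2)$. Write $y=\sqrt{2}\,s^{1/2}w(s)$ with $s=x/2$, so that $d/dx=\tfrac12\,d/ds$. The Liouville transformation $w=s^{-1/2}v$ turns the Bessel equation into the normal form
\[
v''+\Bigl(1-\frac{\nu^2-1/4}{s^2}\Bigr)v=0 .
\]
Rescaling $s=x/2$ multiplies the potential by $1/4$, which gives
\[
y''+\Bigl(\frac14-\frac{\nu^2-1/4}{x^2}\Bigr)y=0 .
\]
With $\nu=d/2$ we have $\nu^2-1/4=(d^2-1)/4$, so this is exactly the equation with potential $V_{d,0}$. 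Hence $y_{d,1}$ and $y_{d,2}$ are solutions on $(0,\infty)$.

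\emph{The Wronskian~\eqref{eq:jy-W}.} Use $W(J_\nu,Y_\nu)(s)=2/(\pi s)$. For $y_i(x)=\sqrt{x}\,Z_i(x/2)$, the terms coming from the derivative of $\sqrt x$ cancel in the antisymmetric combination. What remains is
\[
W(y_{d,1},y_{d,2})(x)=x\cdot\tfrac12\,W(J_\nu,Y_\nu)(x/2)=x\cdot\frac12\cdot\frac{2}{\pi(x/2)}=\frac{2}{\pi}.
\]
This constant is nonzero, so the two solutions are linearly independent.

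There is no real obstacle here. The only care needed is with the chain-rule factor $1/2$ in both computations, so that the constants $\tfrac14$ and $2/\pi$ come out correctly.
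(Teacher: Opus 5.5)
Your proposal is correct and matches the paper. The inequality is exactly \eqref{eq:V-bound}, and the remaining claims are the standard Bessel facts that the paper simply cites from Watson: the Liouville normal form of the Bessel equation and $W(J_\nu,Y_\nu)(s)=2/(\pi s)$. Your chain-rule bookkeeping, which yields the potential $\frac14-\frac{d^2-1}{4x^2}$ and the Wronskian $2/\pi$, is accurate.
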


\subsection{Phase of the extremal function for fixed dimension}
In this subsection $d$ is fixed, so the index $d$ is sometimes omitted from
functions.

\subsubsection*{Asymptotic integration}
We need a simple version of the standard theorem on an integrable perturbation
of the harmonic oscillator.

\begin{lem}\label{lem:asymptotic-integration}
Let $X>0$ and let a real function $u\not\equiv0$ on $(X,\infty)$ satisfy
\[
u''+\Bigl(\frac14+r(x)\Bigr)u=0,
\quad
r(x)=O(x^{-2}),\quad x\to+\infty.
\]
Then there exist $A>0$ and $\theta\in\mathbb R$ such that
\begin{align}
\label{eq:u-asymptotic}
u(x)&=A\cos\Bigl(\frac x2+\theta\Bigr)+O(x^{-1}),\\
\label{eq:up-asymptotic}
u'(x)&=-\frac A2\sin\Bigl(\frac x2+\theta\Bigr)+O(x^{-1}).
\end{align}
\end{lem}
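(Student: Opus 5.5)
We use variation of constants (Prüfer-type amplitude–phase variables) relative to the unperturbed oscillator $u''+\frac14u=0$. Write
\[
u(x)=A(x)\cos\Bigl(\frac x2+\theta(x)\Bigr),\quad
u'(x)=-\frac{A(x)}2\sin\Bigl(\frac x2+\theta(x)\Bigr),
\]
with $A>0$. Such $A,\theta$ exist and are $C^1$ because $(u,u')\ne(0,0)$ for a nontrivial solution. Substituting into the equation gives
\[
A'=2r\,A\cos\psi\sin\psi,\qquad
\theta'=2r\cos^2\psi,\qquad \psi=\frac x2+\theta.
\]
The only input is that $r=O(x^{-2})$ is integrable at $+\infty$; the $O(x^{-2})$ rate then converts into the $O(x^{-1})$ remainders.

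\textbf{Step 1 (amplitude).} We have $|(\log A)'|\le 2|r|$. Hence $\log A(x)$ converges to a finite limit as $x\to\infty$, so $A(x)\to A_\infty>0$. Moreover,
\[
|\log A(x)-\log A_\infty|\le 2\int_x^\infty|r|=O(x^{-1}),
\]
which gives $A(x)=A_\infty+O(x^{-1})$.

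\textbf{Step 2 (phase).} Similarly $|\theta'|\le 2|r|$. Therefore $\theta(x)\to\theta_\infty$ and $|\theta(x)-\theta_\infty|=O(x^{-1})$.

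\textbf{Step 3 (assembly).} Since $\cos$ and $\sin$ are $1$-Lipschitz,
\[
\Bigl|A(x)\cos\Bigl(\frac x2+\theta(x)\Bigr)-A_\infty\cos\Bigl(\frac x2+\theta_\infty\Bigr)\Bigr|
\le|A(x)-A_\infty|+A_\infty|\theta(x)-\theta_\infty|=O(x^{-1}).
\]
The same estimate holds for $u'$ with the sine. This gives \eqref{eq:u-asymptotic} and \eqref{eq:up-asymptotic} with $A=A_\infty$ and $\theta=\theta_\infty$.

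There is no real obstacle here. The only care needed is that the transformed system is well defined on all of $(X,\infty)$, which follows from $A^2=u^2+4u'^2>0$ by uniqueness for the linear ODE. Note also that $r$ need only be locally integrable for the ODE to make sense; continuity is available in the application since $V_d$ is continuous.
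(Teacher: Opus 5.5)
Your proof is correct and is essentially the paper's argument. The paper's complex quantity $Z(x)=e^{-ix/2}\bigl(u-2iu'\bigr)$ is exactly $A(x)e^{i\theta(x)}$ in your notation, and its equation $Z'=2ir\,e^{-ix/2}u$ is the complex form of your system for $A$ and $\theta$. The paper works with $Z$ directly, so it needs Gronwall to bound $Z$ and a separate contraction argument to show $Z_\infty\ne0$. Your version tracks $\log A$ and $\theta$ instead, so $A_\infty>0$ follows at once from the integrability of $(\log A)'$.
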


\begin{proof}
Put
\[
Z(x)=e^{-ix/2}\bigl(u(x)-2iu'(x)\bigr).
\]
The equation directly gives
\[
Z'(x)=2ir(x)e^{-ix/2}u(x).
\]
Since $u$ is real,
\[
|u(x)|\le |u(x)-2iu'(x)|=|Z(x)|,
\]
and therefore
\begin{equation}\label{Z-r}
|Z'(x)|\le2|r(x)|\,|Z(x)|.
\end{equation}

Fix $X_0>X$. From \eqref{Z-r},
\[
|Z(x)|\le |Z(X_0)| +2\int_{X_0}^x |r(t)|\,|Z(t)|\,dt, \quad x\ge X_0,
\]
and Gronwall's inequality
\cite[Ch.~3, Th.~1.1]{Ha02} gives
\[
|Z(x)|
\le |Z(X_0)|
\exp\biggl(2\int_{X_0}^x|r(t)|\,dt\biggr).
\]
Thus $Z$ is bounded for $x\ge X_0$. Hence, for $y>x\ge X_0$,
\[
|Z(y)-Z(x)|\le C\int_x^y|r(t)|\,dt.
\]
Since $r(t)=O(t^{-2})$, the right-hand side tends to zero as
$x,y\to+\infty$. Therefore the limit
\[
Z_\infty=\lim_{x\to+\infty}Z(x)
\]
exists.

We show that $Z_\infty\ne0$. If $Z_\infty=0$, then
\[
|Z(x)|
\le 2\int_x^\infty |r(t)|\,|Z(t)|\,dt.
\]
Increasing $X_0$ if necessary, we may assume that
\[
2\int_{X_0}^\infty|r(t)|\,dt<1.
\]
Taking the supremum over $x\ge X_0$, we obtain
\[
\sup_{x\ge X_0}|Z(x)|
\le
2\biggl(\int_{X_0}^\infty|r(t)|\,dt\biggr)
\sup_{x\ge X_0}|Z(x)|,
\]
so $Z\equiv0$ on $[X_0,\infty)$. Consequently,
$u=u'=0$ on this half-line, and uniqueness implies $u\equiv0$, a contradiction.

Since
\[
\int_x^\infty|r(t)|\,dt=O(x^{-1}),
\]
we have
\[
Z(x)=Z_\infty+O(x^{-1}).
\]
Write $Z_\infty=Ae^{i\theta}$, $A>0$. Then
\[
u(x)-2iu'(x)=Ae^{i(x/2+\theta)}+O(x^{-1}),
\]
and taking real and imaginary parts gives \eqref{eq:u-asymptotic} and
\eqref{eq:up-asymptotic}.
\end{proof}

Applying Lemma~\ref{lem:asymptotic-integration} to $u_{d,1}$ and $u_{d,2}$ and
using \eqref{eq:V-infty}, we obtain numbers $A_1,A_2>0$ and
$\theta_1,\theta_2$ such that
\[
u_{d,j}(x)=A_j\cos\Bigl(\frac x2+\theta_j\Bigr)+O_d(x^{-1}), \quad j=1,2,
\]
with an analogous formula for the derivatives.

From constancy of the Wronskian,
\[
2a_d=\frac{A_1A_2}{2}\,\sin(\theta_1-\theta_2).
\]
In particular,
\[
q_d=\cos(\theta_1-\theta_2)\in(-1,1).
\]
On the other hand, by~\eqref{u-u-varphi},
\[
u_{d,1}(x)u_{d,2}(x)=x^{d+1}\varphi_d(x).
\]
Therefore
\begin{equation}\label{eq:phi-cos-q}
x^{d+1}\varphi_d(x)
=B_d\bigl(\cos(x+\Theta_d)+q_d\bigr)+O_d(x^{-1}),
\end{equation}
where
\[
B_d=\frac{A_1A_2}{2}>0,\quad \Theta_d=\theta_1+\theta_2.
\]

\subsubsection*{Spectral gap}
We need an elementary lemma on averaging a sign function.

\begin{lem}\label{lem:sign-mean}
Let $F\in C((0,\infty))$ be a real function such that, as $x\to+\infty$,
\[
F(x)=C\bigl(\cos(x+\Theta)+q\bigr)+O(x^{-1}),
\quad C\ne0,\quad |q|<1,
\]
and let $s(x)=\sign F(x)$. Then, for every $w\in L^1(0,\infty)$,
\[
\lim_{R\to\infty}\int_0^\infty w(y)s(Ry)\,dy
=\sign(C)\,m(q)\int_0^\infty w(y)\,dy,
\]
where
\begin{equation}\label{eq:mq}
m(q)=\frac1{2\pi}\int_0^{2\pi}\sign(\cos t+q)\,dt
=\frac2\pi\arcsin q.
\end{equation}
\end{lem}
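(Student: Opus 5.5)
We first reduce to the model sign function $s_0(x)=\sign(\cos(x+\Theta)+q)$. Since $F(x)/C=\cos(x+\Theta)+q+O(x^{-1})$, $s(x)=\sign(C)\,s_0(x)$ at every $x$ with $x\ge X$ (for a suitable large $X$) that lies outside the set
\[
E_\varepsilon=\{x\ge X:\ |\cos(x+\Theta)+q|\le\varepsilon\},
\]
where $\varepsilon=\varepsilon(X)\to0$ as $X\to\infty$; indeed, for $x\ge X$ the $O(x^{-1})$ error is smaller than $\varepsilon$ once $\varepsilon\ge C'X^{-1}$. Because $|q|<1$, the zeros of $\cos t+q$ in each period are simple (there $|\sin t|=\sqrt{1-q^2}>0$). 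Hence the set $E_\varepsilon\cap[k\pi\cdot 2,(k+1)2\pi]$ has measure $O_q(\varepsilon)$, uniformly in the period.

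Next we approximate $w$. By density of step functions in $L^1(0,\infty)$, and since $|s|\le1$ and $|s_0|\le1$, it suffices to prove the claim for $w=\mathbf 1_{[a,b]}$ with $0<a<b<\infty$. The error in replacing $w$ is bounded by $2\|w-w_{\text{step}}\|_1$ uniformly in $R$. For such $w$ we have
\[
\int_0^\infty w(y)s(Ry)\,dy=\frac1R\int_{Ra}^{Rb}s(x)\,dx .
\]
For $R$ large, $Ra\ge X$, so this equals
\[
\sign(C)\,\frac1R\int_{Ra}^{Rb}s_0(x)\,dx+O\Bigl(\frac1R\,|E_\varepsilon\cap[Ra,Rb]|\Bigr).
\]
The error term is $O_q((b-a)\varepsilon+R^{-1})$, because $[Ra,Rb]$ contains about $R(b-a)/2\pi$ periods.

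The main term follows from periodicity of $s_0$. The average of $s_0$ over $[Ra,Rb]$ equals its period mean $m(q)$ up to an error of $O(1/R)$ coming from the incomplete periods at the ends. Thus the main term tends to $\sign(C)\,m(q)(b-a)$. We then let $R\to\infty$ first and then $X\to\infty$, so that $\varepsilon\to0$.

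It remains to evaluate $m(q)$ in closed form. On $[-\pi,\pi]$ we have $\cos t>-q$ exactly when $|t|<\arccos(-q)=\pi/2+\arcsin q$. Therefore
\[
m(q)=\frac1{2\pi}\bigl(2(\pi/2+\arcsin q)-(2\pi-2(\pi/2+\arcsin q))\bigr)=\frac2\pi\arcsin q .
\]

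The only delicate point is controlling the set where the perturbation can flip the sign. This is where $|q|<1$ is used: it makes the level crossings transversal, so that set has small measure in each period. Everything else is routine Riemann–Lebesgue-type averaging.
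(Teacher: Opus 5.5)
Your proposal is correct and follows essentially the paper's route: compare $s$ with the periodic function $\sign(C)\sign(\cos(x+\Theta)+q)$ outside an exceptional set made small by the transversality coming from $|q|<1$, reduce to step functions by density, apply periodic averaging, and compute $m(q)$. The only difference is minor: you use a fixed threshold $\varepsilon(X)$ and pass to a double limit ($R\to\infty$, then $X\to\infty$), whereas the paper uses the $O(x^{-1})$-per-period bound on the exceptional set to get $\int_1^X|s-p|\,dx=O(\log X)$ directly.
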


\begin{proof}
Put
\[
p(x)=\sign(C)\sign(\cos(x+\Theta)+q).
\]
The zeros of $\cos(x+\Theta)+q$ are simple because $|q|<1$. Hence the
asymptotics of $F$ shows that, on every sufficiently far period, the set where
$s\ne p$ is contained in the union of two intervals of total length
$O(x^{-1})$. Therefore,
\[
\int_1^X|s(x)-p(x)|\,dx=O(\log X).
\]
For a compactly supported step function $w$, after the substitution $x=Ry$,
this gives
\[
\int_0^\infty w(y)(s(Ry)-p(Ry))\,dy\to0.
\]
Since $s$ and $p$ are bounded, the limit extends to all $w\in L^1(0,\infty)$
by density.

The function $p$ is periodic. Standard periodic averaging gives
\[
\int_0^\infty w(y)p(Ry)\,dy
\to
\biggl(\frac1{2\pi}\int_0^{2\pi}p(t)\,dt\biggr)
\int_0^\infty w(y)\,dy.
\]
Indeed, this is checked directly for compactly supported step functions $w$,
and the general case follows by density in $L^1(0,\infty)$ and boundedness of
$p$.

It remains to compute the mean value. The set
\[
\{t\in[0,2\pi)\colon \cos t>-q\}
\]
has length
\[
2\arccos(-q)=\pi+2\arcsin q,
\]
which gives~\eqref{eq:mq}.
\end{proof}

\begin{lem}
In~\eqref{eq:phi-cos-q}, one has $q_d=0$. Consequently, as $x\to +\infty$,
\begin{equation}\label{eq:phi-pure-cos}
x^{d+1}\varphi_d(x)=B_d\cos(x+\Theta_d)+O_d(x^{-1}).
\end{equation}
\end{lem}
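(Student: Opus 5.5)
We will combine the spectral gap \eqref{eq:signature-gap} with Lemma~\ref{lem:sign-mean}, applied to $F(x)=x^{d+1}\varphi_d(x)$. By \eqref{eq:phi-cos-q}, $F$ has the required form with $C=B_d>0$, $q=q_d\in(-1,1)$ and $\Theta=\Theta_d$. Moreover $\sign F=\sigma_d$ on $(0,\infty)$. The idea is that the identity $\widehat{|x|^{d-1}\sigma_d}=4a_d/d$ on $(-1,1)$ forces the ``low-frequency content'' of $\sigma_d$ at infinity to be small. Lemma~\ref{lem:sign-mean}, on the other hand, says that $\sigma_d(Ry)$ converges weakly to the constant $m(q_d)$. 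Comparing the growth rates of the two sides should give $m(q_d)=0$, hence $\arcsin q_d=0$ and $q_d=0$. The conclusion \eqref{eq:phi-pure-cos} then follows at once from \eqref{eq:phi-cos-q}.

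Concretely, take an even test function $\psi\in\mathcal S(\mathbb R)$ with $\widehat\psi$ supported in $(-1,1)$ and $\psi(0)\neq0$. Test \eqref{eq:signature-gap} against $\widehat\psi(t)$ rescaled as $\widehat{\psi_\varepsilon}(t)=\varepsilon^{-1}\widehat\psi(t/\varepsilon)$, where $\psi_\varepsilon(x)=\psi(\varepsilon x)$; its support stays inside $(-1,1)$ for $\varepsilon\le1$. Parseval gives
\[
\int_{\mathbb R}|x|^{d-1}\sigma_d(x)\psi(\varepsilon x)\,dx=\frac{4a_d}{d}\cdot\frac1{2\pi}\int\widehat{\psi_\varepsilon}(t)\,dt=\frac{4a_d}{d}\psi(0).
\]
The right side is bounded in $\varepsilon$. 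On the left we substitute $x=Ry$ with $R=1/\varepsilon$ and use evenness of $\sigma_d$ (since $\varphi_d$ is even). This yields
\[
2R^{d}\int_0^\infty y^{d-1}\psi(y)\sigma_d(Ry)\,dy.
\]
Now apply Lemma~\ref{lem:sign-mean} with $w(y)=y^{d-1}\psi(y)\in L^1(0,\infty)$. The integral tends to $m(q_d)\int_0^\infty y^{d-1}\psi(y)\,dy$. We then choose $\psi$ so that $\int_0^\infty y^{d-1}\psi(y)\,dy\neq0$. This is possible: for instance, take $\psi=|\widehat\chi|^2$ up to normalization, with $\chi$ real, even and supported in $(-1/2,1/2)$. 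Then $\psi\ge0$, $\psi\not\equiv0$ and $\widehat\psi$ is supported in $(-1,1)$. Since the left side is $R^d$ times a quantity tending to $m(q_d)\cdot(\text{nonzero})$ while remaining bounded, and $d\ge1$, we get $m(q_d)=0$.

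The main obstacle is justifying the pairing. The distribution identity \eqref{eq:signature-gap} is tested against Schwartz functions whose Fourier transform is supported in $(-1,1)$, and $\psi_\varepsilon$ is such a function, so Parseval in $\mathcal S'$ applies directly. Since $|x|^{d-1}\sigma_d$ is a tempered, locally integrable function, the pairing equals the absolutely convergent integral; this uses only that $\psi$ decays rapidly. The one remaining point is the convention: the pairing $\langle\widehat T,\phi\rangle=\langle T,\widehat\phi\rangle$ produces a factor $2\pi$ and a reflection, which evenness handles. None of these bookkeeping factors affects the conclusion, because only boundedness against the growth $R^d$ is used.
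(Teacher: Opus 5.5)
Your proposal is correct and follows essentially the same route as the paper. You test the spectral-gap identity \eqref{eq:signature-gap} against a rescaled nonnegative band-limited Schwartz function (your $|\widehat\chi|^2$ plays the role of the paper's $\eta=g^2$), divide the resulting bounded identity by $R^d$, and apply Lemma~\ref{lem:sign-mean} with $w(y)=y^{d-1}\psi(y)$ to force $m(q_d)=0$. Two cosmetic points: $\chi$ should be taken in $C_c^\infty$ with support compactly inside $(-1/2,1/2)$, so that $\psi\in\mathcal S$, and your assumption $\psi(0)\ne0$ is never actually used.
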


\begin{proof}
Choose a real even function $g\in\mathcal S(\mathbb R)$,
$g\not\equiv0$, such that $\supp\widehat g\subset(-1/4,1/4)$, and put
$\eta=g^2$. Then $\eta\ge0$, $\eta$ is even, $\eta\in\mathcal S$, and
$\supp\widehat\eta\subset(-1/2,1/2)$. For $R\ge1$, put
$\eta_R(x)=\eta(x/R)$. Then
\[
\supp\widehat{\eta_R}\subset(-1/(2R),1/(2R))\subset(-1,1).
\]
From the distributional identity~\eqref{eq:signature-gap} and Parseval's
formula for distributions, we obtain the exact equality
\[
\int_{\mathbb R}|x|^{d-1}\sigma_d(x)\eta(x/R)\,dx
=\frac{4a_d}{d}\,\eta(0).
\]
After the substitution $x=Ry$ and division by $R^d$,
\begin{equation}\label{eq:weighted-sign-limit}
\int_{\mathbb R}|y|^{d-1}\sigma_d(Ry)\eta(y)\,dy
\to0.
\end{equation}
The functions $\varphi_d$ and $\sigma_d$ are even, so the left-hand side is
twice the integral over $(0,\infty)$. On the positive half-line,
$\sign\varphi_d(x)=\sign(x^{d+1}\varphi_d(x))$. Apply
Lemma~\ref{lem:sign-mean} to~\eqref{eq:phi-cos-q} with the weight
$w(y)=y^{d-1}\eta(y)$. Since $w\ge0$ and $w\not\equiv0$,
\eqref{eq:weighted-sign-limit} implies $m(q_d)=0$. By~\eqref{eq:mq},
$q_d=0$.
\end{proof}

\begin{lem}
There exists a constant $c_d\in\mathbb R$ such that
\begin{equation}\label{eq:tail-with-c}
\tau_{d,n}=\pi(n+c_d)+O_d(n^{-1}),
\quad n\to\infty.
\end{equation}
\end{lem}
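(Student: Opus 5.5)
The plan is to read the zeros directly off the pure-cosine asymptotics \eqref{eq:phi-pure-cos}, using the derivative asymptotics to get simplicity and local uniqueness. Put $F(x)=x^{d+1}\varphi_d(x)=u_{d,1}u_{d,2}$. From Lemma~\ref{lem:asymptotic-integration} applied to $u_{d,1},u_{d,2}$ we have both $u_{d,j}=A_j\cos(x/2+\theta_j)+O_d(x^{-1})$ and $u_{d,j}'=-\frac{A_j}{2}\sin(x/2+\theta_j)+O_d(x^{-1})$. Hence
\[
F(x)=B_d\cos(x+\Theta_d)+O_d(x^{-1}),\qquad F'(x)=-B_d\sin(x+\Theta_d)+O_d(x^{-1}),
\]
where the second formula follows from $F'=u_{d,1}'u_{d,2}+u_{d,1}u_{d,2}'$ and the identity $q_d=0$.

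First I locate the zeros for large $x$. Consider the model zeros $x_m=\pi(m+\tfrac12)-\Theta_d$ of $\cos(x+\Theta_d)$. Away from these points, say with $|\cos(x+\Theta_d)|\ge Cx^{-1}$ for a suitable large $C$, the function $F$ does not vanish. Near $x_m$ we have $|F'|\ge B_d/2$, so $F$ is strictly monotone there. Combined with the intermediate value theorem, this gives exactly one zero $\xi_m$ of $F$ in $|x-x_m|\le C'x_m^{-1}$, and no other zeros for $x\ge X_d$. The mean value theorem then yields $\xi_m=x_m+O_d(m^{-1})$.

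Next I match indices. The positive zeros of $F$ on $(0,\infty)$ coincide with the $\tau_{d,n}$, and they are strictly increasing. Beyond $X_d$ the zeros are thus in bijection with the consecutive $x_m$, so there is a fixed integer shift $\ell_d$ with $\tau_{d,n}=\xi_{n+\ell_d}$ for all large $n$. Here $\ell_d$ equals the number of the $\tau$'s below $X_d$ minus the index of the first $x_m$ beyond it. This gives \eqref{eq:tail-with-c} with
\[
c_d=\ell_d+\tfrac12-\Theta_d/\pi .
\]

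The main point needing care is the claim of exactly one zero per window. A possible zero of $F$ far from every $x_m$ is excluded by the lower bound on $|\cos|$ with $C$ large compared with the $O_d(x^{-1})$ constant. Multiple zeros near $x_m$ are excluded by the derivative asymptotics. For this step it is essential that the derivative statement \eqref{eq:up-asymptotic} of Lemma~\ref{lem:asymptotic-integration} is available and that $q_d=0$, so that $F'$ has the clean form above.
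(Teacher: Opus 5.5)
Your proposal is correct and follows the paper's proof essentially verbatim. Like the paper, you use the derivative asymptotics $F'=-B_d\sin(x+\Theta_d)+O_d(x^{-1})$ to get exactly one zero of $F$ within $O_d(x^{-1})$ of each large zero of $\cos(x+\Theta_d)$, then match indices with a fixed shift, which you make explicit as $c_d=\ell_d+\tfrac12-\Theta_d/\pi$.

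One small misattribution: the formula for $F'$ follows from the addition formula for $\sin$ and holds for any $q_d$. It is the formula for $F$ itself, and hence the spacing $\pi$ of the zeros, that needs $q_d=0$.
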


\begin{proof}
The asymptotics of $u_{d,j}$ and their derivatives obtained in
Lemma~\ref{lem:asymptotic-integration}, together with
$x^{d+1}\varphi_d(x)=u_{d,1}(x)u_{d,2}(x)$, imply, in addition to
\eqref{eq:phi-pure-cos}, that
\[
\bigl(x^{d+1}\varphi_d(x)\bigr)'
=
-B_d\sin(x+\Theta_d)+O_d(x^{-1}).
\]
Therefore, near every sufficiently large zero of
$\cos(x+\Theta_d)$ there is exactly one zero of
$x^{d+1}\varphi_d(x)$, and its distance from the corresponding zero of the
cosine is $O_d(x^{-1})$. After matching the numbering, we obtain
\eqref{eq:tail-with-c}.
\end{proof}

\subsection{Exact determination of the phase shift}

The following simple lemma on canonical products will be used twice.

\begin{lem}\label{lem:derivative-exponent}
Let $x_n>0$ be strictly increasing and
\begin{equation}\label{eq:x-rho-tail}
x_n=\pi(n+c)+O(n^{-1}),\quad n\to\infty,
\end{equation}
with some $c\in\mathbb R$. Let
\[
P(z)=\prod_{n=1}^{\infty}\Bigl(1-\frac{z^2}{x_n^2}\Bigr).
\]
Then there exists $C>0$ such that
\begin{equation}\label{eq:Pprime-exponent}
|P'(x_n)|=C n^{-2c-1}(1+o(1)).
\end{equation}
\end{lem}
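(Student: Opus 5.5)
We compare $P$ with the model product built from the sequence $\pi(n+c)$ and handle the remainder by a convergent correction product. First compute
\[
P'(x_n)=-\frac{2}{x_n}\prod_{k\ne n}\Bigl(1-\frac{x_n^2}{x_k^2}\Bigr),
\]
so it suffices to find the asymptotics of $\prod_{k\ne n}|1-x_n^2/x_k^2|$.

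The model case is $y_k=\pi(k+c)$ for $k\ge1$, with $c>-1$ so that $y_k>0$; the general case is reduced to this by reindexing finitely many terms. The model product is
\[
P_0(z)=\prod_{k\ge1}\Bigl(1-\frac{z^2}{\pi^2(k+c)^2}\Bigr)=\frac{\Gamma(1+c)^2}{\Gamma(1+c-z/\pi)\Gamma(1+c+z/\pi)},
\]
by the Weierstrass product for $1/\Gamma$. At $z=y_n$ one factor $1/\Gamma(1+c-y_n/\pi)=1/\Gamma(1-n)$ vanishes. Differentiating with the reflection formula gives
\[
|P_0'(y_n)|=\frac{\Gamma(1+c)^2\,(n-1)!}{\pi\,\Gamma(1+2c+n)}.
\]
By Stirling's formula this is $C_0\,n^{-2c-1}(1+o(1))$, which is exactly the claimed exponent.

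Next, compare $\Pi_n=\prod_{k\ne n}|1-x_n^2/x_k^2|$ with the analogous $\Pi_n^0$ for the $y_k$. Write $x_k=y_k+\varepsilon_k$ with $\varepsilon_k=O(k^{-1})$, and factor
\[
\frac{1-x_n^2/x_k^2}{1-y_n^2/y_k^2}=\frac{y_k^2}{x_k^2}\cdot\frac{x_k^2-x_n^2}{y_k^2-y_n^2}.
\]
The factor $\prod_k y_k^2/x_k^2$ converges to a positive constant, since $\varepsilon_k/y_k=O(k^{-2})$ is summable. For the second factor write
\[
\frac{x_k-x_n}{y_k-y_n}=1+\frac{\varepsilon_k-\varepsilon_n}{\pi(k-n)},\qquad \frac{x_k+x_n}{y_k+y_n}=1+\frac{\varepsilon_k+\varepsilon_n}{\pi(k+n+2c)}.
\]
The product over $k\ne n$ of the second expression tends to $1$, because $\sum_k (k^{-1}+n^{-1})/(k+n)=O(n^{-1}\log n)$. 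For the first expression we need $\sum_{k\ne n}|\varepsilon_k-\varepsilon_n|/|k-n|\to0$. It is bounded by $\sum_{k\ne n}(k^{-1}+n^{-1})/|k-n|=O(n^{-1}\log n)$, so that product also tends to $1$. Finitely many small $k$ are harmless, and the only requirement there is that no factor vanishes, which holds since the $x_n$ are strictly increasing and positive. Hence $\Pi_n/\Pi_n^0\to C_1>0$. Together with $x_n/y_n\to1$ this yields \eqref{eq:Pprime-exponent}.

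The main obstacle is the cross term $\prod_{k\ne n}(x_k-x_n)/(y_k-y_n)$. There the perturbations are compared with gaps $|k-n|$ that may be $1$, so a careful harmonic-sum bound with the $O(k^{-1})$ decay is needed, splitting $k<n/2$, $|k-n|\le n/2$ and $k>3n/2$. The logarithms of the factors must be uniformly small for large $n$, which holds since $|\varepsilon_k-\varepsilon_n|=O(n^{-1})$ whenever $k\asymp n$. A secondary point is the case $c\le-1$, where $\pi(k+c)$ is not positive for small $k$. I would handle it by shifting the index $m=k+\lfloor -c\rfloor+1$ and absorbing the finitely many affected factors into the constant; this gives $n^{-2c-1}$ with the original $n$, since $(n+j)^{\alpha}\sim n^{\alpha}$.
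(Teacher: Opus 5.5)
Your overall architecture (a Gamma-function model product, the convergent normalizing product $\prod_k y_k^2/x_k^2$, and a comparison of $\prod_{k\ne n}(x_k^2-x_n^2)/(y_k^2-y_n^2)$ with $1$) is the paper's. However, the comparison step fails as you wrote it. The trouble is not near the diagonal, which is what you worry about, but as $k\to\infty$. After you split the ratio into $\frac{x_k-x_n}{y_k-y_n}\cdot\frac{x_k+x_n}{y_k+y_n}$, each factor contains the perturbation $\pm\varepsilon_n$, which does not decay in $k$. Hence $\sum_k|\varepsilon_n|/(k+n+2c)$ and $\sum_{k>n}|\varepsilon_n|/(k-n)$ are both infinite. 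In fact, the partial products over $k\le K$ of the sum factor and of the difference factor behave like $K^{\varepsilon_n/\pi}$ and $K^{-\varepsilon_n/\pi}$ respectively, up to factors bounded in $K$. So neither product converges, let alone to $1$, unless $\varepsilon_n=0$. Accordingly, both bounds you quote are false: $\sum_k(k^{-1}+n^{-1})/(k+n)=O(n^{-1}\log n)$ and $\sum_{k\ne n}(k^{-1}+n^{-1})/|k-n|=O(n^{-1}\log n)$. In each, the $n^{-1}$ part is a divergent harmonic tail, and only the $k^{-1}$ part is $O(n^{-1}\log n)$.

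The two divergences cancel only after recombination, because $\frac1{k+n+2c}-\frac1{k-n}=-\frac{2(n+c)}{(k-n)(k+n+2c)}$. The clean repair, which is exactly what the paper does, is not to split at all. Put $e_k=x_k^2-y_k^2=2y_k\varepsilon_k+\varepsilon_k^2$, which is $O(1)$ precisely because $\varepsilon_k=O(k^{-1})$, and write $\frac{x_k^2-x_n^2}{y_k^2-y_n^2}=1+\frac{e_k-e_n}{\pi^2(k-n)(k+n+2c)}$. Then every correction term is $O(n^{-1})$ uniformly in $k\ne n$, and $\sum_{k\ne n}\frac1{|k-n|(k+n)}=O(n^{-1}\log n)$, so the product is $1+o(1)$.

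There is also a smaller inaccuracy in your reduction for $c\le-1$. The finitely many removed factors $|1-x_n^2/x_k^2|$ are not constants: each is $\asymp n^2$. Their product $\asymp n^{2j}$ (for a shift by $j$) is exactly what turns $n^{-2(c+j)-1}$ back into $n^{-2c-1}$; this is the paper's finite factor $C_0n^{2N-2}$ set against the tail estimate $C_Nn^{1-2N-2c}$. Moreover, $m=k+\lfloor -c\rfloor+1$ moves $c$ in the wrong direction; you want $k=m+j$ with $c+j>-1$.
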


\begin{proof}
Choose $N$ so large that $n+c>0$ for $n\ge N$, and put
$\rho_n=\pi(n+c)$. The model tail product is
\[
Q_N(z)=\prod_{n=N}^{\infty}
\Bigl(1-\frac{z^2}{\rho_n^2}\Bigr)
=\frac{\Gamma(N+c)^2}
{\Gamma(N+c+z/\pi)\Gamma(N+c-z/\pi)}.
\]
At $\rho_n$, $n\ge N$,
\begin{equation}\label{eq:Q-tail-derivative}
|Q_N'(\rho_n)|
=\frac{\Gamma(N+c)^2\Gamma(n-N+1)}
{\pi\Gamma(n+N+2c)}
=C_N n^{1-2N-2c}(1+o(1)).
\end{equation}

By~\eqref{eq:x-rho-tail}, $x_k^2-\rho_k^2=O(1)$. For $k\ne n$,
\[
|\rho_k^2-\rho_n^2|\asymp |k-n|(k+n)
\]
for large $k,n$, and hence
\begin{equation}\label{eq:gap-sum}
\sum_{k\ge N,\;k\ne n}
\frac1{|\rho_k^2-\rho_n^2|}
=O\Bigl(\frac{\log n}{n}\Bigr).
\end{equation}
Thus, applying $\log(1+s)=s+O(s^2)$ first outside a finite set of indices, we
obtain
\begin{equation}\label{eq:gap-product-tail}
\prod_{k\ge N,\;k\ne n}
\frac{|x_k^2-x_n^2|}{|\rho_k^2-\rho_n^2|}=1+o(1).
\end{equation}
Moreover, $x_k/\rho_k=1+O(k^{-2})$, so the corresponding normalizing products
converge to a nonzero constant. Finally, the finite factor
\[
\prod_{k<N} \Bigl(1-\frac{x_n^2}{x_k^2}\Bigr)
\]
has modulus $C_0n^{2N-2}(1+o(1))$. Combining this with
\eqref{eq:Q-tail-derivative}, \eqref{eq:gap-sum}, and
\eqref{eq:gap-product-tail}, we obtain \eqref{eq:Pprime-exponent}.
\end{proof}

\begin{thm}\label{thm:true-tail}
For every fixed $d\ge1$,
\[
\tau_{d,n}=\pi\Bigl(n+\frac d2\Bigr)+O_d(n^{-1}),
\quad n\to\infty.
\]
\end{thm}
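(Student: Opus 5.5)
The plan is to pin down the constant $c_d$ in \eqref{eq:tail-with-c} by comparing two computations of $|\varphi_d'(\tau_{d,n})|$ as $n\to\infty$. The first uses the identity \eqref{eq:residue}. The second uses Lemma~\ref{lem:derivative-exponent} applied to the canonical product of $\varphi_d$.

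First, by \eqref{eq:tail-with-c}, the zeros $x_n=\tau_{d,n}$ satisfy the hypothesis \eqref{eq:x-rho-tail} with $c=c_d$. They are positive and strictly increasing. Since $\varphi_d$ equals the canonical product $P$ over these zeros, Lemma~\ref{lem:derivative-exponent} gives
\[
|\varphi_d'(\tau_{d,n})|=C\,n^{-2c_d-1}(1+o(1)),\quad n\to\infty,
\]
for some $C>0$.

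Second, \eqref{eq:residue} gives exactly $|\varphi_d'(\tau_{d,n})|=2a_d\,\tau_{d,n}^{-d-1}$. Since $\tau_{d,n}=\pi n(1+O(1/n))$, this equals $2a_d\pi^{-d-1}n^{-d-1}(1+o(1))$. Comparing the two power laws in $n$, with both constants positive and finite, forces $-2c_d-1=-d-1$, i.e. $c_d=d/2$. Substituting back into \eqref{eq:tail-with-c} yields the theorem.

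The only point needing care is the numbering. The index $n$ in \eqref{eq:tail-with-c} must be the true index of the $n$-th positive zero $\tau_{d,n}$, and not a shifted count of the cosine zeros. This is what makes $c_d$ a meaningful constant and the exponent comparison valid. The proof of \eqref{eq:tail-with-c} matches zeros of $x^{d+1}\varphi_d$ with zeros of $\cos(x+\Theta_d)$ one-to-one for large $x$, so the shift is absorbed into $c_d$ with the genuine indexing. Lemma~\ref{lem:derivative-exponent} uses that same indexing, so the two are consistent.

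I therefore expect no serious obstacle. The argument only compares exponents, and does not need the constant $C$ from Lemma~\ref{lem:derivative-exponent}.
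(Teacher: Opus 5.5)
Your proposal is correct and follows the paper's proof: you combine the tail asymptotics $\tau_{d,n}=\pi(n+c_d)+O_d(n^{-1})$ with Lemma~\ref{lem:derivative-exponent} to get $|\varphi_d'(\tau_{d,n})|=C n^{-2c_d-1}(1+o(1))$. You then compare this with $|\varphi_d'(\tau_{d,n})|=2a_d\tau_{d,n}^{-d-1}=C' n^{-d-1}(1+o(1))$ from the residue identity, and matching exponents gives $c_d=d/2$. Your remark that the index in the tail asymptotics must be the true zero index is a valid clarification of what the paper compresses into ``after matching the numbering.''
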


\begin{proof}
By~\eqref{eq:tail-with-c} and Lemma~\ref{lem:derivative-exponent},
\[
|\varphi_d'(\tau_{d,n})|
=C n^{-2c_d-1}(1+o(1)).
\]
On the other hand, the exact identity~\eqref{eq:residue} gives
\[
|\varphi_d'(\tau_{d,n})|
=\frac{2a_d}{\tau_{d,n}^{d+1}}
=C' n^{-d-1}(1+o(1)).
\]
Therefore $2c_d+1=d+1$, that is, $c_d=d/2$.
\end{proof}

\subsection*{Comparison with the Bessel equation}
Put $\nu=d/2$. Introduce the phase function $\theta_\nu$ by the standard
formula
\begin{equation}\label{eq:Bessel-phase-def}
J_\nu(z)=M_\nu(z)\cos\theta_\nu(z),
\quad
Y_\nu(z)=M_\nu(z)\sin\theta_\nu(z),
\end{equation}
where $M_\nu(z)>0$ and $\theta_\nu(0+)=-\pi/2$. The identity
\[
M_\nu(z)^2\theta_\nu'(z)=\frac{2}{\pi z}
\]
implies that $\theta_\nu$ is strictly increasing; see
\cite[\S\,10.18]{DLMF}.

Put
\[
P_d(x)=\theta_\nu(x/2)+\frac\pi4.
\]
Then $P_d$ is strictly increasing. For the solutions introduced in
\eqref{eq:j-y}, put
\[
y_{d}^{-}(x)=y_{d,1}(x)-y_{d,2}(x),\quad
y_{d}^{+}(x)=y_{d,1}(x)+y_{d,2}(x).
\]
From \eqref{eq:Bessel-phase-def},
\[
y_{d}^{-}(x)=A_d(x)\cos P_d(x),
\quad
y_{d}^{+}(x)=A_d(x)\sin P_d(x),
\]
where
\[
A_d(x)=\sqrt{2x}\,M_\nu(x/2)>0.
\]
Moreover, by \eqref{eq:jy-W},
\begin{equation}\label{eq:v-W}
W(y_{d}^{-},y_{d}^{+})=\frac4\pi.
\end{equation}

Define the sequence $\gamma_{d,n}$ by
\begin{equation}\label{eq:beta-def}
P_d(\gamma_{d,n})=\frac{\pi n}{2},
\quad n=0,1,2,\ldots.
\end{equation}
Then $\gamma_{d,1},\gamma_{d,3},\ldots$ are all positive zeros of
$y_{d}^{-}$, while $\gamma_{d,0},\gamma_{d,2},\gamma_{d,4},\ldots$ are all
positive zeros of $y_{d}^{+}$.

For later use, note that $\gamma_{d,1}>d$. Indeed, the classical ordering of
the first zeros $\nu\le j'_{\nu,1}<y_{\nu,1}<j_{\nu,1}$ implies
$J_\nu(\nu)>0$ and $Y_\nu(\nu)<0$ for $\nu\ge1/2$. Hence
$\theta_\nu(\nu)\in(-\pi/2,0)$ and
$P_d(d)<\pi/4<\pi/2=P_d(\gamma_{d,1})$; see
\cite[\S\,10.21]{DLMF}.

The fixed-order phase asymptotics
\[
\theta_\nu(z)
=z-\Bigl(\frac\nu2+\frac14\Bigr)\pi+O_\nu(z^{-1})
\]
gives
\begin{equation}\label{eq:beta-tail}
\gamma_{d,n}=\pi\Bigl(n+\frac d2\Bigr)+O_d(n^{-1}).
\end{equation}

\subsection{Comparison of zeros}
The main statement is as follows.

\begin{thm}\label{thm:all-zero-comparison}
For all $d\ge1$ and $n\ge1$,
\[
\tau_{d,n}\le\gamma_{d,n}.
\]
\end{thm}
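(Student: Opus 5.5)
The plan is to prove Theorem~\ref{thm:all-zero-comparison} by a Sturm--Pr\"ufer comparison run \emph{backwards from infinity}. I would treat odd and even $n$ separately: compare $u_{d,1}$ with $y_d^-$ for odd $n$, and $u_{d,2}$ with $y_d^+$ for even $n$. The argument is the same in both cases, so I describe the odd case.

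For a nontrivial real solution $u$ of $u''+Vu=0$ on $(0,\infty)$, I introduce a modified Pr\"ufer angle adapted to the frequency $1/2$:
\[
u=\rho\sin\psi,\qquad u'=\tfrac12\rho\cos\psi,\qquad \rho>0 .
\]
A direct computation gives
\[
\psi'=f_V(x,\psi):=\tfrac12\cos^2\psi+2V(x)\sin^2\psi .
\]
The zeros of $u$ are exactly the points where $\psi\in\pi\mathbb Z$, and $\psi'=1/2>0$ there. Hence $\psi$ crosses each level $k\pi$ exactly once, and only upward.

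Let $\psi_u$ be the angle of $u_{d,1}$ (potential $V_d$) and $\psi_y$ the angle of $y_d^-$ (potential $V_{d,0}$). By Corollary~\ref{cor:Bessel-majorant}, $V_d\le V_{d,0}$, so
\[
f_{V_d}(x,\psi)\le f_{V_{d,0}}(x,\psi).
\]

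The first step is normalization. Theorem~\ref{thm:true-tail} and \eqref{eq:beta-tail} show that the odd-indexed zeros satisfy $\tau_{d,n}-\gamma_{d,n}=O_d(n^{-1})$ with the \emph{same} indexing. Choose the additive multiples of $\pi$ in $\psi_u,\psi_y$ so that $\psi_u(\tau_{d,n})=\psi_y(\gamma_{d,n})=L_n$ for all large odd $n$. Then $L_n$ equals the same level for both solutions at every odd $n$, since going down by two indices lowers the level by $\pi$ in both.

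Set $D=\psi_y-\psi_u$. I claim $D(x)\to0$ as $x\to\infty$. Both angles equal $x/2+\mathrm{const}+o(1)$; this follows from Lemma~\ref{lem:asymptotic-integration} applied to each solution, using \eqref{eq:V-infty} for $V_d$. The constants agree because the level-$L_n$ crossings occur at points $\tau_{d,n},\gamma_{d,n}$ whose difference tends to $0$.

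The key step is $D\le0$ on $(0,\infty)$. By the mean value theorem,
\[
D'=f_{V_{d,0}}(x,\psi_y)-f_{V_{d,0}}(x,\psi_u)+\bigl(f_{V_{d,0}}-f_{V_d}\bigr)(x,\psi_u)\ge \lambda(x)D,
\]
where
\[
|\lambda(x)|\le\bigl|2V_{d,0}(x)-\tfrac12\bigr|=\frac{d^2-1}{2x^2}.
\]
This bound is continuous on $(0,\infty)$ and integrable at $\infty$. Suppose $D(x_0)>0$ for some $x_0$. Then
\[
\Bigl(D\,e^{-\int_{x_0}^x\lambda}\Bigr)'\ge0,
\]
so $D(x)\ge D(x_0)\exp\bigl(-\int_{x_0}^\infty|\lambda|\bigr)>0$ for all $x>x_0$. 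This contradicts $D\to0$. Hence $\psi_u\ge\psi_y$ everywhere.

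Now fix an odd $n$. At $\gamma_{d,n}$ we have $\psi_u(\gamma_{d,n})\ge\psi_y(\gamma_{d,n})=L_n$. Since $\psi_u$ crosses the level $L_n$ only once, upward, and does so at $\tau_{d,n}$, it follows that $\tau_{d,n}\le\gamma_{d,n}$. For even $n$, I repeat the argument with $u_{d,2}$ and $y_d^+$. There the zeros $\gamma_{d,2},\gamma_{d,4},\dots$ of $y_d^+$ are matched with $\tau_{d,2},\tau_{d,4},\dots$ by the same tail normalization. The extra zero $\gamma_{d,0}$ simply corresponds to a level below $L_2$, and it does not matter whether $u_{d,2}$ attains that level.

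The main obstacle I expect is the normalization step. One must check carefully that the index matching of zeros at infinity forces the two phase constants to agree exactly, not merely modulo $\pi$. Equivalently, one must check that the levels $L_n$ coincide for every $n$ and not only asymptotically. This relies on the sharp tail asymptotics of Theorem~\ref{thm:true-tail}; the cruder \eqref{eq:tail-with-c} with an unknown $c_d$ would not suffice. A second point to verify is that the Lipschitz bound for $f_{V_{d,0}}$ stays controlled on all of $(0,\infty)$. It does, since the Gronwall step only propagates forward from $x_0$ and needs integrability only at $\infty$.
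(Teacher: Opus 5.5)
Your proof is correct. Its skeleton is the paper's: a Sturm comparison in phase form, run backwards from infinity, done separately for the pairs $(u_{d,1},y_d^-)$ and $(u_{d,2},y_d^+)$, with the phases pinned at infinity by Theorem~\ref{thm:true-tail} and \eqref{eq:beta-tail}.

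The difference lies in the comparison device.
\begin{itemize}
\item The paper expands $u$ by variation of constants in the Bessel basis $A_d\cos P_d$, $A_d\sin P_d$ and follows the relative phase $\delta$. This phase is exactly monotone, $\delta'=\Delta u^2/(W_0r^2)\ge0$, so together with $\delta(+\infty)=0$ it gives $\delta\le0$ at once. The zeros of $u$ are then read off as level crossings of $P_d-\delta$, that is, directly in terms of the Bessel phase that defines $\gamma_{d,n}$.
\item You instead give each solution its own frequency-$\tfrac12$ Pr\"ufer angle. Then $D=\psi_y-\psi_u$ is not monotone, and you need the quantitative Gronwall step to stop a positive gap from decaying to $0$. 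That step works exactly because the Lipschitz constant $|2V_{d,0}-\tfrac12|=(d^2-1)/(2x^2)$ is integrable at infinity. With the standard angle $u'=\rho\cos\psi$ the constant would tend to $3/4$ and the argument would fail, so your frequency choice is essential.
\end{itemize}
Your route is the more classical one and uses only the zeros of $y_d^\pm$, not the amplitude--phase representation of the Bessel pair. The paper's route replaces an estimate by an exact identity.

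Two cosmetic points:
\begin{itemize}
\item Since $\rho>0$ fixes $\psi$ modulo $2\pi$, matching the levels may require replacing $u_{d,1}$ by $-u_{d,1}$. This is harmless, since only zeros matter.
\item It is cleaner to use $D'\ge-\frac{d^2-1}{2x^2}D$ on the maximal interval where $D>0$ than to integrate a mean-value coefficient $\lambda$ whose dependence on $x$ you have not controlled.
\end{itemize}
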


Instead of applying Sturm's theorem directly, it is convenient to use its
phase form, which we now prove.

\begin{lem}\label{lem:phase-comparison}
Let $v_1,v_2$ be a fundamental system for the equation
\[
v''+V_0v=0,
\quad W(v_1,v_2)=W_0>0,
\]
and suppose
\[
v_1=A\cos P,\quad v_2=A\sin P,
\quad A>0,\quad P'>0.
\]
Assume also that, as $x\to+\infty$,
\[
v_j(x)=O(1),\quad v_j'(x)=O(1),
\quad j=1,2.
\]
Let $u$ satisfy
\[
u''+Vu=0,
\quad \Delta=V_0-V\ge0,
\]
and, as $x\to+\infty$,
\[
u(x)=c\,v_1(x)+o(1),\quad
u'(x)=c\,v_1'(x)+o(1)
\]
with some $c>0$. Then there exists a continuous real function $\delta$ such
that
\begin{equation}\label{eq:u-phase-rep}
u=rA\cos(P-\delta),\quad r>0,
\end{equation}
\begin{equation}\label{eq:delta-prime}
\delta'(x)=\frac{\Delta(x)u(x)^2}{W_0r(x)^2}\ge0,
\quad
\delta(+\infty)=0.
\end{equation}
In particular, $\delta(x)\le0$ for all $x>0$.
\end{lem}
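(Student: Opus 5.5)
This is a Prüfer-type (variation of parameters) argument in the model basis $v_1,v_2$.

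\textbf{Step 1: the representation.} Since $v_1,v_2$ form a fundamental system, write
\[
u=\alpha v_1+\beta v_2,\qquad u'=\alpha v_1'+\beta v_2',
\]
with coefficients $\alpha,\beta$ given by Cramer's rule:
\[
\alpha=\frac{uv_2'-u'v_2}{W_0},\qquad \beta=\frac{u'v_1-uv_1'}{W_0}.
\]
Differentiating and using $u''=-Vu$, $v_j''=-V_0v_j$, one obtains
\[
\alpha'=-\frac{\Delta\,u\,v_2}{W_0},\qquad \beta'=\frac{\Delta\,u\,v_1}{W_0}.
\]
The vector $(\alpha,\beta)$ never vanishes, since otherwise $u=u'=0$ at a point and hence $u\equiv0$; this would contradict $u\to cv_1$ with $c>0$ and $v_1\not\equiv0$. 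So I can write $\alpha=r\cos\delta$, $\beta=r\sin\delta$ with $r>0$ and $\delta$ chosen as a continuous lift. Then
\[
u=rA(\cos\delta\cos P+\sin\delta\sin P)=rA\cos(P-\delta),
\]
which is \eqref{eq:u-phase-rep}.

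\textbf{Step 2: the derivative of $\delta$.} From $r^2\delta'=\alpha\beta'-\beta\alpha'$ we get
\[
r^2\delta'=\frac{\Delta u}{W_0}\,(\alpha v_1+\beta v_2)=\frac{\Delta u^2}{W_0}.
\]
This gives \eqref{eq:delta-prime}. It is nonnegative because $\Delta\ge0$.

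\textbf{Step 3: the limit at infinity.} This is the main obstacle. The formulas for $\alpha,\beta$ use the asymptotic hypotheses. Write $u=cv_1+\epsilon$ and $u'=cv_1'+\epsilon'$ with $\epsilon,\epsilon'=o(1)$. Then
\[
\alpha=c+\frac{\epsilon v_2'-\epsilon' v_2}{W_0}.
\]
The boundedness of $v_2$ and $v_2'$ gives $\alpha\to c$. Similarly,
\[
\beta=\frac{\epsilon' v_1-\epsilon v_1'}{W_0}\to0.
\]
Hence $(\alpha,\beta)\to(c,0)$ with $c>0$, so $r\to c$. The lift $\delta$ therefore converges to an element of $2\pi\mathbb Z$. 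By subtracting a constant multiple of $2\pi$ from the lift, which does not affect the representation, we normalize $\delta(+\infty)=0$.

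\textbf{Step 4: conclusion.} Since $\delta$ is nondecreasing and tends to $0$ at $+\infty$, we get $\delta(x)\le0$ for all $x>0$. The delicate points are the normalization of the branch and the fact that the $o(1)$ errors do not need integrability. We only need convergence of $(\alpha,\beta)$, which the boundedness of $v_j,v_j'$ supplies, rather than any integral of $\Delta$.
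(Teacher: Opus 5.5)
Your proposal is correct and follows the paper's proof essentially step by step: variation of constants in the basis $v_1,v_2$ via Cramer's rule, the formulas $\alpha'=-\Delta u v_2/W_0$ and $\beta'=\Delta u v_1/W_0$, a continuous polar lift of $(\alpha,\beta)$ giving $r^2\delta'=\Delta u^2/W_0$, and $(\alpha,\beta)\to(c,0)$ from the boundedness of $v_j,v_j'$, followed by normalizing the branch so that $\delta(+\infty)=0$. One small tightening: the cleanest way to rule out $u\equiv0$, and hence $(\alpha,\beta)=(0,0)$, is your own Step~3, since $\alpha\to c>0$; the remark that $v_1\not\equiv0$ does not by itself exclude $u\equiv0$.
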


\begin{proof}
Write $u$ by variation of constants:
\[
u=a(x)v_1(x)+b(x)v_2(x),
\quad
a'v_1+b'v_2=0.
\]
Since $u''+V_0u=\Delta u$, we obtain
\begin{equation}\label{eq:ab-prime}
a'=-\frac{\Delta u\,v_2}{W_0},
\quad
b'=\frac{\Delta u\,v_1}{W_0}.
\end{equation}
By Cramer's rule,
\[
a=\frac{uv_2'-u'v_2}{W_0},
\quad
b=\frac{v_1u'-v_1'u}{W_0}.
\]
Therefore the asymptotics of $u,u'$ and the boundedness of $v_j,v_j'$ imply
\[
a(x)=c+o(1),\quad b(x)=o(1), \quad x\to+\infty.
\]
The pair $(a,b)$ never equals $(0,0)$, since otherwise $u=u'=0$ at that point.
Thus we can choose a continuous argument
\[
a=r\cos\delta,\quad b=r\sin\delta,
\quad r>0,
\]
so that $\delta(+\infty)=0$. From~\eqref{eq:ab-prime},
\[
\delta'=\frac{ab'-ba'}{r^2}
=\frac{\Delta u(av_1+bv_2)}{W_0r^2}
=\frac{\Delta u^2}{W_0r^2}\ge0.
\]
Formula~\eqref{eq:u-phase-rep} follows from the representations of $v_1$ and
$v_2$.
\end{proof}

\begin{proof}[Proof of Theorem~\ref{thm:all-zero-comparison}]
By Corollary~\ref{cor:Bessel-majorant},
\[
\Delta_d(x)=V_{d,0}(x)-V_d(x)\ge0.
\]

First consider $u_{d,1}$. By Lemma~\ref{lem:asymptotic-integration},
\[
u_{d,1}(x)
=A_1\cos\Bigl(\frac x2+\theta_1\Bigr)+O_d(x^{-1}),
\quad
u_{d,1}'(x)
=-\frac{A_1}{2}\sin\Bigl(\frac x2+\theta_1\Bigr)+O_d(x^{-1}),
\]
where $A_1>0$. On the other hand, the standard Bessel asymptotics gives
\[
y_d^-(x)
=
2\sqrt{\frac2\pi}\,
\cos\Bigl(\frac x2-\frac{\pi d}{4}\Bigr)+O_d(x^{-1}),
\]
\[
(y_d^-)'(x)
=
-\sqrt{\frac2\pi}\,
\sin\Bigl(\frac x2-\frac{\pi d}{4}\Bigr)+O_d(x^{-1}).
\]
Theorem~\ref{thm:true-tail} and formula~\eqref{eq:beta-tail} imply that the
corresponding phase shifts agree modulo $\pi$. Hence, changing $u_{d,1}$ to
$-u_{d,1}$ if necessary, we get
\[
u_{d,1}(x)=c_1y_d^-(x)+o(1),
\quad
u_{d,1}'(x)=c_1(y_d^-)'(x)+o(1),
\quad c_1>0.
\]

The standard asymptotics of Bessel functions and their derivatives also gives
\[
y_d^\pm(x)=O_d(1),\quad (y_d^\pm)'(x)=O_d(1),
\quad x\to+\infty,
\]
so all assumptions of Lemma~\ref{lem:phase-comparison} are satisfied. Applying
it with $P=P_d$ and $W_0=4/\pi$, we obtain
\[
u_{d,1}=rA_d\cos(P_d-\delta_1),
\quad \delta_1\le0.
\]
At every zero of $u_{d,1}$, formula~\eqref{eq:delta-prime} gives
$\delta_1'=0$, and hence
\[
(P_d-\delta_1)'=P_d'>0
\]
at the zero itself. Thus every crossing of a level
$\pi/2+\pi\mathbb Z$ is in the increasing direction. The same level cannot be
crossed twice, because between two such crossings there would have to be a
crossing in the decreasing direction. Hence consecutive zeros correspond to
consecutive values of $\pi/2+\pi\mathbb Z$. Matching the phase at infinity and
the numbering of zeros gives, for all $k\ge1$,
\[
P_d(\tau_{d,2k-1})-\delta_1(\tau_{d,2k-1})
=\frac{(2k-1)\pi}{2}.
\]
Since $\delta_1\le0$,
\[
P_d(\tau_{d,2k-1})\le\frac{(2k-1)\pi}{2} =P_d(\gamma_{d,2k-1}).
\]
The strict monotonicity of $P_d$ gives
\begin{equation}\label{eq:odd-zero-comparison}
\tau_{d,2k-1}\le\gamma_{d,2k-1}.
\end{equation}

For $u_{d,2}$, use the positively oriented basis
\[
w_1=y_{d}^{+},\quad w_2=-y_{d}^{-}.
\]
By~\eqref{eq:v-W}, $W(w_1,w_2)=4/\pi$, and if $Q_d=P_d-\pi/2$, then
\[
w_1=A_d\cos Q_d,\quad w_2=A_d\sin Q_d,
\quad Q_d'>0.
\]
Theorem~\ref{thm:true-tail} and~\eqref{eq:beta-tail} show that the phase shifts
of $u_{d,2}$ and $w_1$ at infinity agree modulo $\pi$. Therefore, after choosing
the sign of $u_{d,2}$, we have
\[
u_{d,2}(x)=c_2w_1(x)+o(1),
\quad
u_{d,2}'(x)=c_2w_1'(x)+o(1),
\quad c_2>0.
\]
Applying Lemma~\ref{lem:phase-comparison}, we get
\[
u_{d,2}=rA_d\cos(Q_d-\delta_2),
\quad \delta_2\le0.
\]
Taking into account the numbering of zeros,
\[
Q_d(\tau_{d,2k})-\delta_2(\tau_{d,2k}) =\frac{(2k-1)\pi}{2}.
\]
Hence
\[
P_d(\tau_{d,2k}) \le k\pi=P_d(\gamma_{d,2k}),
\]
and therefore
\begin{equation}\label{eq:even-zero-comparison}
\tau_{d,2k}\le\gamma_{d,2k}.
\end{equation}
Formulas~\eqref{eq:odd-zero-comparison} and \eqref{eq:even-zero-comparison}
prove the theorem.
\end{proof}

\subsection{Limiting distribution of zeros}
We prove Theorem~\ref{thm:limit-measure}. Recall that
\[
\mu_d=\frac1d\sum_{n\ge1}\delta_{\tau_{d,n}/d}.
\]
Let $M$ be the distribution function of the limiting measure
\eqref{eq:intro-measure}:
\begin{equation}\label{eq:M-def}
M(t)=\mu_*((0,t])=
\begin{cases}
0, & 0<t\le1,\\ (1/\pi)\bigl( \sqrt{t^2-1}-\arccos(1/t) \bigr), & t>1.
\end{cases}
\end{equation}

\begin{lem}[{\cite[Lemma~8]{Go26a}}]\label{lem:zero-count}
Let a nonzero solution $u$ of the equation $u''+V(x)u=0$ be defined on an
interval of length $L$, and suppose $V(x)\le M^2$ on this interval. Then the
number of zeros of $u$ does not exceed $ML/\pi+1$.
\end{lem}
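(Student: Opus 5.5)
Let $u\not\equiv0$ solve $u''+Vu=0$ on an interval $I$ of length $L$, with $V\le M^2$ on $I$. The plan is to compare $u$ with the constant-coefficient equation $v''+M^2v=0$ via the Sturm comparison theorem, in the same phase form as Lemma~\ref{lem:phase-comparison}. The claim is that between any two consecutive zeros of $u$ the distance is at least $\pi/M$. Once that is known, if $u$ has zeros $x_1<x_2<\dots<x_m$ in $I$, then
\[
L\ge x_m-x_1\ge (m-1)\pi/M,
\]
so $m\le ML/\pi+1$. If $M=0$ (or $V\le0$), $u$ can have at most one zero. Indeed, at two zeros $x_1<x_2$ we may take $u>0$ in between, so $u''=-Vu\ge0$ there; then $u$ is convex with $u(x_1)=u(x_2)=0$, hence $u\le0$ on $[x_1,x_2]$, a contradiction. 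This matches the bound $0\cdot L/\pi+1$.

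For the key step, take consecutive zeros $a<b$ of $u$ with $u>0$ on $(a,b)$, and suppose $b-a<\pi/M$. Put $v(x)=\sin(M(x-a))$, which is positive on $(a,b]$ because $M(b-a)<\pi$. The Picone/Wronskian computation gives
\[
(u'v-uv')'=(M^2-V)uv\ge0 \quad\text{on }[a,b].
\]
Integrating from $a$ to $b$ and using $u(a)=u(b)=0$ and $v(a)=0$ yields $u'(b)v(b)\ge0$. But $u'(b)<0$: it is $\le0$ since $u>0$ to the left, and nonzero by uniqueness, since $u\not\equiv0$. Together with $v(b)>0$ this gives a contradiction. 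Hence $b-a\ge\pi/M$.

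Alternatively, one can use the Prüfer angle $\theta$ defined by $u=\rho\sin\theta$, $u'=M\rho\cos\theta$. Then
\[
\theta'=M\cos^2\theta+\frac{V}{M}\sin^2\theta\le M,
\]
and $\theta$ increases by exactly $\pi$ between consecutive zeros, since $\theta'=M>0$ at zeros. This again gives spacing $\ge\pi/M$.

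There is no real obstacle. The only care needed is the boundary case of equality $b-a=\pi/M$, which is harmless for the non-strict bound, and the degenerate case $M=0$ handled above. Simplicity of the zeros, and hence their isolation, follows from uniqueness for the ODE with continuous $V$.
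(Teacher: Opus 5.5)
Your proof is correct and complete. With $v=\sin(M(x-a))$, the identity $(u'v-uv')'=(M^2-V)uv$ forces consecutive zeros to be at least $\pi/M$ apart, which gives the count, and your convexity argument handles $M=0$.

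The paper gives no proof of this lemma; it only quotes \cite[Lemma~8]{Go26a}, so there is nothing in the text to compare against. Your Sturm comparison argument, or the equivalent Prüfer-angle bound $\theta'\le M$, is the standard proof of such a statement.
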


\begin{proof}[Proof of Theorem~\ref{thm:limit-measure}]
Denote
\[
N_d(t)=\bigl|\{n\colon \tau_{d,n}\le dt\}\bigr|,
\quad
\widetilde N_d(t)=\bigl|\{n\ge1\colon \gamma_{d,n}\le dt\}\bigr|.
\]
From~\eqref{eq:beta-def} and the Debye asymptotics of the Bessel phase, for each
fixed $t>1$,
\[
\frac{\widetilde N_d(t)}d\to M(t),
\]
while for $0<t<1$ the left-hand side tends to zero. By
Theorem~\ref{thm:all-zero-comparison}, $N_d(t)\ge \widetilde N_d(t)$, and hence
\begin{equation}\label{eq:count-lower}
\liminf_{d\to\infty}\frac{N_d(t)}d\ge M(t).
\end{equation}

We prove the reverse estimate. If $t<1$, the potential in
\eqref{eq:Bessel-majorant} is negative on $(0,dt)$ for all sufficiently large
$d$. Each of the two solutions $u_{d,1},u_{d,2}$ has at most one zero there, so
$N_d(t)=O(1)$.

Let $t>1$ and $1=s_0<s_1<\cdots<s_N=t$. On
$[ds_{i-1},ds_i]$, by~\eqref{eq:Bessel-majorant},
\[
V_d(x)\le M_{d,i}^2,
\quad
M_{d,i}=\frac12\sqrt{\Bigl(1-\frac{1-d^{-2}}{s_i^2}\Bigr)_+}.
\]
Also, on $(0,d)$ each of the solutions $u_{d,1},u_{d,2}$ has at most one zero.
Indeed, by~\eqref{eq:Bessel-majorant}, for $0<x\le d$,
\[
V_d(x)\le V_{d,0}(x)\le \frac1{4d^2}.
\]
Applying Lemma~\ref{lem:zero-count} on $[\varepsilon,d]$ with
$M=1/(2d)$ gives
\[
N_{[\varepsilon,d]}(u_{d,j})
\le
\frac{d-\varepsilon}{2\pi d}+1<2,
\quad j=1,2.
\]
Thus the number of zeros in $(0,d]$ not included in the partition of $[d,dt]$
is $O(1)$.

Applying Lemma~\ref{lem:zero-count} to both solutions, we obtain
\[
\limsup_{d\to\infty}\frac{N_d(t)}d
\le\frac2\pi\sum_{i=1}^{N}(s_i-s_{i-1})
\lim_{d\to\infty}M_{d,i}.
\]
Refining the partition,
\begin{equation}\label{eq:count-upper}
\limsup_{d\to\infty}\frac{N_d(t)}d
\le\frac1\pi\int_1^t\sqrt{1-s^{-2}}\,ds=M(t).
\end{equation}
Formulas~\eqref{eq:count-lower} and \eqref{eq:count-upper} give
\[
\frac{N_d(t)}d\to M(t)
\]
for all $t>0$. Since
\[
\frac{N_d(t)}d=\mu_d((0,t]),
\quad
M(t)=\mu_*((0,t]),
\]
this is exactly the weak convergence on compact subsets
$\mu_d\to\mu_*$.
\end{proof}

\subsection{Root asymptotics}

We show that the limiting measure alone already determines the exponential
scale of the constant. Put
\[
\alpha_d=\frac1d\,\log\frac{a_d}{d^d}.
\]
By~\eqref{eq:L-via-a} and Stirling's formula,
\begin{equation}\label{eq:L-alpha}
\frac1d\,\log\Ld(d)=-1-\alpha_d+o(1).
\end{equation}

We need two simple uniform estimates. Since $\varphi_d$ has exponential
type~$1$ and $|\varphi_d(x)|\le1$ on $\mathbb R$; see
\cite[Sec.~2.3]{Go26a}, the standard estimate for functions of Bernstein class
gives
\[
|\varphi_d(x+iy)|\le e^{|y|}.
\]
Applying Jensen's formula to the disk of radius $2R$ and using
$\varphi_d(0)=1$, we obtain
\[
2\log2\,|\{n\colon \tau_{d,n}\le R\}|
\le
\frac1{2\pi}\int_0^{2\pi}
\log|\varphi_d(2Re^{i\theta})|\,d\theta
\le
\frac{4R}{\pi}.
\]
Hence
\[
|\{n\colon \tau_{d,n}\le R\}|\le\frac{2R}{\pi\log2}.
\]
Therefore
\begin{equation}\label{eq:tail-bound}
\sup_d\int_T^\infty\frac{d\mu_d(t)}{t^2}\le\frac CT.
\end{equation}
Also, the zero equilibrium condition \eqref{eq:zero-equilibrium} with $n=1$
gives $a_d<\tau_{d,1}^d$. From \eqref{eq:Lstar} and the definition
\eqref{eq:ad},
\[
a_d=\frac{(2\pi)^d}{2v_d^2\Ld(d)}.
\]
Thus the upper estimate in~\eqref{eq:known-bounds} and Stirling's formula
\[
v_d^{2/d}=\frac{2\pi e}{d}\,(1+o(1))
\]
give
\begin{equation}\label{eq:first-zero-apriori}
\liminf_{d\to\infty}\frac{\tau_{d,1}}d
\ge\frac1{\sqrt{2e}}>0.
\end{equation}

For an interval $J=[p,q]\subset(0,1)$, introduce
\[
\mathcal K_J(t)=\frac1{|J|}\int_J
\log\Bigl|1-\frac{s^2}{t^2}\Bigr|\,ds.
\]
By~\eqref{eq:first-zero-apriori}, \eqref{eq:tail-bound}, and the weak
convergence in Theorem~\ref{thm:limit-measure}, we have
\begin{equation}\label{eq:averaged-potential}
\frac1{|J|}\int_J\frac1d\,\log|\varphi_d(ds)|ds
\to\int_1^\infty\mathcal K_J(t)d\mu_*(t).
\end{equation}
Here the logarithmic singularity is removed by integration in~$s$, and the tail
is uniformly small by~\eqref{eq:tail-bound}.

Put
\[
F_*(s)=\int_1^\infty
\log\Bigl(1-\frac{s^2}{t^2}\Bigr)d\mu_*(t),\quad 0<s<1.
\]
Differentiation under the integral sign gives
\[
F_*(s)=\sqrt{1-s^2}-1-
\log\frac{1+\sqrt{1-s^2}}2.
\]
Consequently,
\begin{equation}\label{eq:g-star}
g_*(s)=\log s+F_*(s)\nearrow\log2-1
\quad(s\uparrow1).
\end{equation}

Radial integration and the definition of $a_d$ give the exact identity
\[
2a_d=d\int_0^\infty|\varphi_d(x)|x^{d-1}dx.
\]
After the substitution $x=ds$, restriction of the integral to $J$, and
application of Jensen's inequality, we obtain
\[
\alpha_d\ge\frac1{|J|}\int_J\Bigl(
\frac1d\,\log|\varphi_d(ds)|+
\Bigl(1-\frac1d\Bigr)\log s\Bigr)ds+o(1).
\]
Formulas~\eqref{eq:averaged-potential} and \eqref{eq:g-star} give, for every
fixed interval $J\subset(0,1)$,
\[
\liminf_{d\to\infty}\alpha_d
\ge
\frac1{|J|}\int_J g_*(s)\,ds.
\]
Taking $J=[1-2\varepsilon,1-\varepsilon]$, $0<\varepsilon<1/2$, and then
letting $\varepsilon\downarrow0$, we obtain
\[
\liminf_{d\to\infty}\alpha_d
\ge
\lim_{\varepsilon\downarrow0}
\frac1\varepsilon
\int_{1-2\varepsilon}^{1-\varepsilon}g_*(s)\,ds
=
g_*(1-)
=
\log2-1.
\]

On the other hand, the known estimate $\Ld(d)\ge2^{-d}$ and
\eqref{eq:L-alpha} give
\[
\limsup_{d\to\infty}\alpha_d\le\log2-1.
\]
Hence
\[
\alpha_d\to\log2-1.
\]
By~\eqref{eq:L-alpha}, this immediately implies
\[
\Ld(d)^{1/d}\to\frac12.
\]
Thus Corollary~\ref{cor:intro-root-asymptotic} is proved.

\subsection{Relative canonical products}
Put
\[
\lambda_{d,n}=\pi\Bigl(n+\frac d2\Bigr).
\]
By Theorem~\ref{thm:true-tail}, the product
\[
D_d=\prod_{n=1}^{\infty}
\frac{\lambda_{d,n}^2}{\tau_{d,n}^2}
\]
converges absolutely. Similarly, by~\eqref{eq:beta-tail},
\begin{equation}\label{eq:DB}
\widetilde D_d=\prod_{n=1}^{\infty}
\frac{\lambda_{d,n}^2}{\gamma_{d,n}^2}
\end{equation}
converges. Theorem~\ref{thm:all-zero-comparison} immediately gives
\begin{equation}\label{eq:Dcompare}
D_d\ge \widetilde D_d.
\end{equation}

The product $D_d$ can be evaluated exactly.

\begin{lem}\label{lem:relative-derivative}
Let $c>-1$,
\[
\rho_n=\pi(n+c),\quad n\ge1,
\]
and let a strictly increasing sequence $x_n>0$ satisfy
\begin{equation}\label{eq:relative-assumption}
x_n-\rho_n=O(n^{-1}).
\end{equation}
Put
\[
P(z)=\prod_{n=1}^{\infty} \Bigl(1-\frac{z^2}{x_n^2}\Bigr), \quad
Q(z)=\prod_{n=1}^{\infty} \Bigl(1-\frac{z^2}{\rho_n^2}\Bigr)
\]
and
\[
D=\prod_{n=1}^{\infty}\frac{\rho_n^2}{x_n^2}.
\]
Then $D$ converges absolutely and
\begin{equation}\label{eq:relative-derivative-limit}
\frac{|P'(x_n)|}{|Q'(\rho_n)|}\to D, \quad n\to\infty.
\end{equation}
\end{lem}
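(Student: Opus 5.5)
Absolute convergence of $D$ comes first. From \eqref{eq:relative-assumption} we have $x_n/\rho_n=1+O(n^{-2})$, hence $\rho_n^2/x_n^2=1+O(n^{-2})$. Since $c>-1$, all $\rho_n>0$, and the $x_n$ are positive as well, so no factor vanishes. Therefore $\sum|\log(\rho_n^2/x_n^2)|<\infty$, and $D$ converges absolutely to a positive number.

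For \eqref{eq:relative-derivative-limit} we write the derivatives at the zeros as products:
\[
|P'(x_n)|=\frac{2}{x_n}\prod_{k\ne n}\Bigl|1-\frac{x_n^2}{x_k^2}\Bigr|,
\qquad
|Q'(\rho_n)|=\frac{2}{\rho_n}\prod_{k\ne n}\Bigl|1-\frac{\rho_n^2}{\rho_k^2}\Bigr|.
\]
Using $|1-x_n^2/x_k^2|=|x_k^2-x_n^2|/x_k^2$, the quotient becomes
\[
\frac{|P'(x_n)|}{|Q'(\rho_n)|}
=\frac{\rho_n}{x_n}\cdot\frac{x_n^2}{\rho_n^2}\cdot\prod_{k\ge1}\frac{\rho_k^2}{x_k^2}
\cdot\prod_{k\ne n}\frac{|x_k^2-x_n^2|}{|\rho_k^2-\rho_n^2|}.
\]
Here we multiplied and divided by the missing $k=n$ factor $\rho_n^2/x_n^2$, which tends to $1$. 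The prefactor $\rho_n/x_n\cdot x_n^2/\rho_n^2=x_n/\rho_n$ also tends to $1$, and the full product $\prod_k\rho_k^2/x_k^2$ is exactly $D$. It therefore remains to show
\[
\prod_{k\ne n}\frac{|x_k^2-x_n^2|}{|\rho_k^2-\rho_n^2|}\to1 .
\]
This is essentially \eqref{eq:gap-product-tail} from the proof of Lemma~\ref{lem:derivative-exponent}, now taken over all $k\ge1$ and not only $k\ge N$.

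To prove this limit, write $x_k^2-\rho_k^2=\epsilon_k$. Since $x_k^2-\rho_k^2=(x_k-\rho_k)(x_k+\rho_k)$, we get $\epsilon_k=O(1)$ uniformly in $k$. Then
\[
\frac{x_k^2-x_n^2}{\rho_k^2-\rho_n^2}=1+\frac{\epsilon_k-\epsilon_n}{\rho_k^2-\rho_n^2},
\]
and $|\rho_k^2-\rho_n^2|=\pi^2|k-n|(k+n+2c)\ge c'|k-n|(k+n)$ for all $k\ne n\ge1$, because $c>-1$ gives $k+n+2c>0$ and in fact $k+n+2c\asymp k+n$. The perturbation is then bounded by $s_{k,n}=O(1/(|k-n|(k+n)))$. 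Each factor is positive, since both sequences are strictly increasing, and $\sup_{k\ne n}|s_{k,n}|\to0$ as $n\to\infty$ (the worst case is $k=n\pm1$, where $s_{k,n}=O(1/n)$). So for large $n$ every factor lies in $[1/2,3/2]$, and $\log(1+s)=O(|s|)$ applies uniformly. We obtain
\[
\Bigl|\log\prod_{k\ne n}(1+s_{k,n})\Bigr|\le C\sum_{k\ne n}\frac1{|k-n|(k+n)}=O\Bigl(\frac{\log n}{n}\Bigr)\to0,
\]
which is the estimate \eqref{eq:gap-sum}, now over all $k\ge1$.

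The main point needing care is uniformity. The small indices $k$ and the adjacent indices $k=n\pm1$ must be controlled by the same bound, with no finite set of exceptional factors left that depends on $n$. Both are handled by the lower bound $|\rho_k^2-\rho_n^2|\gtrsim|k-n|(k+n)$ together with $\epsilon_k=O(1)$. Combining the three limits gives $|P'(x_n)|/|Q'(\rho_n)|\to D$.
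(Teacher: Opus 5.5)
Your proposal is correct and follows the paper's proof essentially step for step. You use the same exact identity $|P'(x_n)|/|Q'(\rho_n)| = D\,(x_n/\rho_n)\prod_{k\ne n}|x_k^2-x_n^2|/|\rho_k^2-\rho_n^2|$, the same splitting with $e_k=x_k^2-\rho_k^2=O(1)$, and the same $O(\log n/n)$ bound coming from $|\rho_k^2-\rho_n^2|=\pi^2|k-n|(k+n+2c)$. Your explicit check that $k+n+2c\asymp k+n$ for all $k\ne n\ge1$ (so no exceptional indices remain) is a small clarification that the paper leaves implicit.
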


\begin{proof}
From~\eqref{eq:relative-assumption} and $\rho_n\asymp n$, we have
\[
\frac{x_n}{\rho_n}=1+O(n^{-2}).
\]
Therefore,
\[
\log\frac{\rho_n^2}{x_n^2}=O(n^{-2}),
\]
and the product $D$ converges absolutely. The canonical products $P$ and $Q$
also converge locally uniformly because
\[
\sum_{n=1}^{\infty}\frac1{x_n^2}<\infty, \quad
\sum_{n=1}^{\infty}\frac1{\rho_n^2}<\infty.
\]

Differentiating the products at their corresponding zeros, we obtain the exact
identity
\begin{equation}\label{eq:derivative-ratio-product}
\frac{|P'(x_n)|}{|Q'(\rho_n)|}
=
D\,\frac{x_n}{\rho_n}
\prod_{k\ne n}
\Bigl|
\frac{x_k^2-x_n^2}{\rho_k^2-\rho_n^2}
\Bigr|.
\end{equation}
Put $e_k=x_k^2-\rho_k^2$. By~\eqref{eq:relative-assumption},
$e_k=O(1)$. For $k\ne n$,
\[
\frac{x_k^2-x_n^2}{\rho_k^2-\rho_n^2}
=
1+\frac{e_k-e_n}{\rho_k^2-\rho_n^2}.
\]
Since both sequences are strictly increasing, the left-hand side is positive.
Also,
\[
|\rho_k^2-\rho_n^2|
=
\pi^2|k-n|(k+n+2c),
\]
and therefore
\[
\sum_{k\ne n}
\frac{|e_k-e_n|}{|\rho_k^2-\rho_n^2|}
\le
C\sum_{k\ne n}
\frac1{|k-n|(k+n+2c)}
=
O_c\Bigl(\frac{\log n}{n}\Bigr).
\]
In particular,
\[
\max_{k\ne n}
\frac{|e_k-e_n|}{|\rho_k^2-\rho_n^2|}
\to0.
\]
Consequently,
\[
\sum_{k\ne n}
\Bigl|
\log\Bigl(
1+\frac{e_k-e_n}{\rho_k^2-\rho_n^2}
\Bigr)
\Bigr|
=
O_c\Bigl(\frac{\log n}{n}\Bigr)
=o(1),
\]
and hence
\[
\prod_{k\ne n}
\Bigl|
\frac{x_k^2-x_n^2}{\rho_k^2-\rho_n^2}
\Bigr|
=1+o(1).
\]
Finally, $x_n/\rho_n\to1$, and \eqref{eq:derivative-ratio-product} gives
\eqref{eq:relative-derivative-limit}.
\end{proof}

Consider the function
\[
F_d(z)=
\frac{\Gamma(d/2+1)^2}
{\Gamma(d/2+1+z/\pi)\Gamma(d/2+1-z/\pi)}.
\]
It is normalized by $F_d(0)=1$ and has zeros $\pm\lambda_{d,n}$. Hence
\[
F_d(z)=\prod_{n=1}^{\infty}
\Bigl(1-\frac{z^2}{\lambda_{d,n}^2}\Bigr).
\]
Using the pole of the gamma function at $1-n$, we obtain
\begin{equation}\label{eq:Fprime-lambda}
|F_d'(\lambda_{d,n})|
=\frac{\Gamma(d/2+1)^2\Gamma(n)}
{\pi\Gamma(n+d+1)}.
\end{equation}

\begin{lem}
For every $d\ge1$,
\begin{equation}\label{eq:D-normalization}
D_d= \frac{2a_d}{\pi^d\Gamma(d/2+1)^2}.
\end{equation}
Consequently,
\begin{equation}\label{eq:L-via-D}
2^d\Ld(d)=
\frac{(4/\pi)^d}{D_d}.
\end{equation}
\end{lem}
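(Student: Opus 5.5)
Apply Lemma~\ref{lem:relative-derivative} with $c=d/2>-1$, $x_n=\tau_{d,n}$, $\rho_n=\lambda_{d,n}$. The hypothesis \eqref{eq:relative-assumption} is exactly Theorem~\ref{thm:true-tail}, and then $P=\varphi_d$ and $Q=F_d$. The lemma gives
\[
\frac{|\varphi_d'(\tau_{d,n})|}{|F_d'(\lambda_{d,n})|}\to D_d,\quad n\to\infty.
\]
So it remains to compute the left-hand side asymptotically from the two explicit formulas available.

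For the numerator, use the exact residue identity \eqref{eq:residue}:
\[
|\varphi_d'(\tau_{d,n})|=2a_d\,\tau_{d,n}^{-d-1}.
\]
By Theorem~\ref{thm:true-tail}, $\tau_{d,n}=\pi n(1+O_d(n^{-1}))$, so
\[
|\varphi_d'(\tau_{d,n})|=2a_d\,(\pi n)^{-d-1}(1+o(1)).
\]
For the denominator, use \eqref{eq:Fprime-lambda} together with the standard ratio $\Gamma(n)/\Gamma(n+d+1)=n^{-d-1}(1+o(1))$:
\[
|F_d'(\lambda_{d,n})|=\frac{\Gamma(d/2+1)^2}{\pi}\,n^{-d-1}(1+o(1)).
\]
Dividing, the powers $n^{-d-1}$ cancel, and the ratio tends to
\[
\frac{2a_d\,\pi^{-d-1}\cdot\pi}{\Gamma(d/2+1)^2}=\frac{2a_d}{\pi^d\Gamma(d/2+1)^2}.
\]
Since the limit equals $D_d$ by Lemma~\ref{lem:relative-derivative}, this is \eqref{eq:D-normalization}.

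For \eqref{eq:L-via-D}, substitute $a_d=\pi^d\Gamma(d/2+1)^2D_d/2$ into \eqref{eq:L-via-a}:
\[
\Ld(d)=\frac{2^{d-1}\Gamma(d/2+1)^2\cdot 2}{\pi^d\Gamma(d/2+1)^2D_d}=\frac{2^d}{\pi^dD_d}.
\]
Multiplying by $2^d$ gives $(4/\pi)^d/D_d$.

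The only delicate points are bookkeeping. First, check that \eqref{eq:Fprime-lambda} is correct, i.e.\ differentiate $1/\Gamma(d/2+1-z/\pi)$ at $z=\lambda_{d,n}$, where the argument equals $1-n$; there the reciprocal Gamma function has derivative of modulus $\Gamma(n)$, and the factor $1/\pi$ comes from the chain rule. Second, make sure the $o(1)$ from $\tau_{d,n}^{-d-1}$ versus $(\pi n)^{-d-1}$ is harmless, which holds because $d$ is fixed. No genuine obstacle is expected: the real work is in Lemma~\ref{lem:relative-derivative} and Theorem~\ref{thm:true-tail}, both already available.
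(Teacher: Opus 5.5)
Your proposal is correct and follows essentially the same route as the paper: apply the relative-derivative lemma with $x_n=\tau_{d,n}$, $\rho_n=\lambda_{d,n}$ (justified by Theorem~\ref{thm:true-tail}), evaluate the ratio of derivatives using the residue identity and the explicit formula for $|F_d'(\lambda_{d,n})|$, and substitute the result into the expression for $\Ld(d)$ in terms of $a_d$. The only cosmetic difference is that you use the cruder asymptotics $\tau_{d,n}=\pi n(1+O_d(n^{-1}))$. This suffices because $d$ is fixed.
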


\begin{proof}
Apply Lemma~\ref{lem:relative-derivative} to
$x_n=\tau_{d,n}$, $\rho_n=\lambda_{d,n}$, using
Theorem~\ref{thm:true-tail}. Then
\[
D_d=\lim_{n\to\infty}
\frac{|\varphi_d'(\tau_{d,n})|}{|F_d'(\lambda_{d,n})|}.
\]
By~\eqref{eq:residue} and \eqref{eq:Fprime-lambda},
\begin{align*}
\frac{|\varphi_d'(\tau_{d,n})|}{|F_d'(\lambda_{d,n})|}
&=%
\frac{2a_d}{\tau_{d,n}^{d+1}}\,
\frac{\pi\Gamma(n+d+1)}
{\Gamma(d/2+1)^2\Gamma(n)}.
\end{align*}
Since
\[
\tau_{d,n}=\pi\Bigl(n+\frac d2\Bigr)(1+O_d(n^{-2}))
\]
and
\[
\frac{\Gamma(n+d+1)}{\Gamma(n)}=n^{d+1}(1+o(1)),
\]
the limit is the right-hand side of \eqref{eq:D-normalization}.
Formula~\eqref{eq:L-via-D} follows directly from
\eqref{eq:D-normalization} and~\eqref{eq:L-via-a}.
\end{proof}

From~\eqref{eq:Dcompare} and~\eqref{eq:L-via-D}, we obtain the explicit upper
estimate
\[
2^d\Ld(d)
\le
\frac{(4/\pi)^d}{\widetilde D_d}.
\]
It remains to find the asymptotics of $\widetilde D_d$.

\subsection{Model product}
Return to the function $M$ defined in~\eqref{eq:M-def}. We have
\[
M'(t)=\frac1\pi\,\sqrt{1-t^{-2}}, \quad t>1.
\]
Thus $M$ is strictly increasing on $[1,\infty)$ from $0$ to $+\infty$.
Denote its inverse by
\[
T=M^{-1}\colon [0,\infty)\to[1,\infty).
\]
Define the quantile grid
\[
\rho_{d,n}=d\,T(n/d),\quad n\ge1,
\]
so that
\begin{equation}\label{eq:rho-quantization}
dM(\rho_{d,n}/d)=n.
\end{equation}

Introduce the model product
\begin{equation}\label{eq:D0}
D_d^0=\prod_{n=1}^{\infty}
\frac{\lambda_{d,n}^2}{\rho_{d,n}^2}.
\end{equation}

\begin{lem}\label{lem:D0}
As $d\to\infty$,
\begin{equation}\label{eq:D0-asymp}
D_d^0=
\Bigl(\frac2\pi+o(1)\Bigr)
\Bigl(\frac4\pi\Bigr)^d.
\end{equation}
\end{lem}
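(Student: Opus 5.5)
The plan is to reduce $\log D_d^0$ to a Riemann sum of a single $d$-independent function and then evaluate that sum with an Euler--Maclaurin argument. The key point is the scaling. Since $\lambda_{d,n}=d\,\pi(n/d+1/2)$ and $\rho_{d,n}=d\,T(n/d)$, we have
\[
\log D_d^0=2\sum_{n=1}^\infty g(n/d),\qquad g(s)=\log\bigl(\pi(s+\tfrac12)\bigr)-\log T(s),\quad s\ge0 .
\]
Using $T(0)=1$, we get $g(0)=\log(\pi/2)$. The claim \eqref{eq:D0-asymp} is then equivalent to
\[
2\sum_{n\ge1}g(n/d)=2d\int_0^\infty g(s)\,ds-g(0)+o(1),
\qquad 2\int_0^\infty g(s)\,ds=\log\frac4\pi .
\]
Indeed, $e^{-g(0)}=2/\pi$ produces exactly the prefactor $2/\pi$.

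\emph{Step 1 (behaviour of $g$).} From \eqref{eq:M-def}, as $t\to\infty$,
\[
\sqrt{t^2-1}-\arccos(1/t)=t-\tfrac\pi2+O(t^{-1}).
\]
Hence $T(s)=\pi s+\pi/2+O(s^{-1})$, so $g(s)=O(s^{-2})$, and $g',g''$ decay correspondingly; this gives absolute convergence at infinity. Near $t=1$ we have $M(t)\sim\frac{2\sqrt2}{3\pi}(t-1)^{3/2}$. Hence $T(s)-1\asymp s^{2/3}$, and $g$ is smooth on $(0,\infty)$ and continuous at $0$, with $g'(s)=O(s^{-1/3})$ and $g''(s)=O(s^{-4/3})$ as $s\downarrow0$.

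\emph{Step 2 (Riemann sum).} Write the sum as $\sum_{n\ge1}g(n/d)=\frac{g(0)}2+\sum_{n\ge0}\frac{g(n/d)+g((n+1)/d)}2$. Compare each trapezoid with $d\int_{n/d}^{(n+1)/d}g$.
\begin{itemize}
\item On the first cell $[0,1/d]$, the error is at most $d\cdot\frac1d\int_0^{1/d}|g'|=O(d^{-2/3})$.
\item On the remaining cells, the trapezoid error is $O\bigl(d^{-2}\sup_{\rm cell}|g''|\bigr)$ per cell. Summing gives $O\bigl(d^{-1}\int_{1/d}^\infty|g''|\bigr)=O(d^{-1}\cdot d^{1/3})=o(1)$.
\end{itemize}
The step I expect to need the most care is this one: the $s^{2/3}$ singularity of $T$ at the turning point means a plain first-order bound $\int|g'|$ gives only $O(1)$. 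Separating the first cell and using the second-order trapezoid bound elsewhere, as above, is what should make the error $o(1)$.

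\emph{Step 3 (the integral).} Cut off at $S$ and integrate by parts using $s=M(t)$:
\[
\int_0^S\log T(s)\,ds=S\log T(S)-\int_1^{T(S)}\frac{M(t)}t\,dt .
\]
For the terms of $M(t)/t$:
\begin{itemize}
\item $\int\sqrt{t^2-1}\,t^{-1}dt=\sqrt{t^2-1}-\arccos(1/t)$;
\item $\arccos(1/t)=\frac\pi2-\arcsin(1/t)$, and $\int_1^\infty\arcsin(1/t)\,t^{-1}dt=\int_0^1\frac{\arcsin u}u\,du=\frac\pi2\log2$.
\end{itemize}
The elementary part $\int_0^S\log(\pi(s+\frac12))\,ds$ is explicit. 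Inserting the expansion of $T(S)$ from Step~1, the divergent terms ($S\log S$, $S$, $\log S$) cancel. Letting $S\to\infty$ should then give $\int_0^\infty g=\frac12\log(4/\pi)$, i.e.\ $\log 2-\frac12\log\pi$, where the $\log 2$ comes from the $\arcsin$ integral.

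As a cross-check on the constant, this value is forced by the already proved lower estimate and Corollary~\ref{cor:intro-root-asymptotic} through \eqref{eq:L-via-D}, provided one believes $D_d\approx\widetilde D_d\approx D_d^0$ up to subexponential factors. Combining Steps 2 and 3 gives
\[
\log D_d^0=d\log\frac4\pi+\log\frac2\pi+o(1),
\]
which is \eqref{eq:D0-asymp}.
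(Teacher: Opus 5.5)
Your proposal is correct and follows the paper's proof: the same reduction $\log D_d^0=\sum_{n\ge1}f(n/d)$ with $f=2g$, an Euler--Maclaurin evaluation producing $d\int_0^\infty f-\tfrac12 f(0)$, and the same value $\int_0^\infty f=\log\frac4\pi$. Your $\int_0^1\frac{\arcsin u}{u}\,du$ is the paper's $\int_0^{\pi/2}y\cot y\,dy$ under $u=\sin y$; the paper reaches it by substituting $t=\csc y$ rather than by cutting off at $S$.

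The only differences are these: you bound the remainder by a second-order trapezoid estimate with the first cell split off, which gives an explicit $O(d^{-2/3})$, whereas the paper uses the first-order formula with a mean-zero averaging argument that needs only $f'\in L^1$. Also, the identity opening Step~2 should read $-\tfrac{g(0)}2$ rather than $+\tfrac{g(0)}2$; your final formula already has the correct sign.
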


For the proof, we need asymptotics of the inverse function $T$, an exact
evaluation of one integral, and the Euler--Maclaurin formula.

\subsubsection*{Asymptotics of the inverse function}
From~\eqref{eq:M-def}, as $s\downarrow0$,
\[
M(1+s)=\frac{2\sqrt2}{3\pi}\,s^{3/2} +O(s^{5/2}).
\]
Consequently,
\begin{equation}\label{eq:T-edge}
T(u)=1+
\Bigl(\frac{3\pi}{2\sqrt2}\Bigr)^{2/3}u^{2/3}
+O(u^{4/3}),\quad u\downarrow0.
\end{equation}
As $t\to\infty$,
\[
\pi\Bigl(M(t)+\frac12\Bigr)
=t+\frac1{2t}+O(t^{-3}),
\]
and inversion gives
\begin{equation}\label{eq:T-infty}
T(u)=\pi\Bigl(u+\frac12\Bigr)
-\frac{1}{2\pi(u+1/2)}+O(u^{-3}).
\end{equation}

Put
\begin{equation}\label{eq:f-def}
f(u)=2\log\frac{\pi(u+1/2)}{T(u)},\quad u\ge0.
\end{equation}
From~\eqref{eq:T-edge} and \eqref{eq:T-infty}, we obtain
\begin{equation}\label{eq:f0}
f(0)=2\log\frac\pi2,
\end{equation}
and
\[
f(u)=
\begin{cases}
f(0)+O(u^{2/3}), & u\downarrow0,\\
O(u^{-2}), & u\to\infty.
\end{cases}
\]
Moreover, from the identity $M(T(u))=u$,
\[
T'(u)=\frac{\pi}{\sqrt{1-T(u)^{-2}}},
\]
so
\[
f'(u)=
\begin{cases}
O(u^{-1/3}), & u\downarrow0,\\
O(u^{-3}), & u\to\infty.
\end{cases}
\]
In particular, $f'\in L^1(0,\infty)$.

\subsubsection*{The integral of $f$}
We show that for the function~\eqref{eq:f-def},
\begin{equation}\label{eq:f-integral}
\int_0^\infty f(u)\,du=\log\frac4\pi.
\end{equation}
Indeed, put $t=T(u)=\csc y$, $0<y\le\pi/2$. Then
\[
\sqrt{t^2-1}=\cot y,
\quad
\arccos(1/t)=\frac\pi2-y.
\]
By~\eqref{eq:M-def},
\[
\pi\Bigl(u+\frac12\Bigr)=\cot y+y,
\quad
du=-\frac{\cot^2y}{\pi}\,dy.
\]
Also,
\[
\frac{\pi(u+1/2)}{T(u)}
=\cos y+y\sin y=F(y).
\]
Therefore,
\begin{equation}\label{eq:I-y}
\int_0^\infty f(u)du
=\frac2\pi\int_0^{\pi/2}
\cot^2y\,\log F(y)\,dy.
\end{equation}
We have $F'(y)=y\cos y$ and
$\int\cot^2y\,dy=-\cot y-y$. Integration by parts gives
\begin{align*}
\int_0^{\pi/2}\cot^2y\log F(y)\,dy
&=-\frac\pi2\,\log\frac\pi2
+\int_0^{\pi/2}y\cot y\,dy.
\end{align*}
The boundary term at $y=0$ is zero because
$\log F(y)=O(y^2)$. Finally,
\[
\int_0^{\pi/2}y\cot y\,dy
=-\int_0^{\pi/2}\log\sin y\,dy
=\frac\pi2\,\log2.
\]
Substitution into~\eqref{eq:I-y} gives~\eqref{eq:f-integral}.

\subsubsection*{Euler--Maclaurin formula}
We need the following simple form of the Euler--Maclaurin formula.

\begin{lem}\label{lem:Euler}
Let $f$ be absolutely continuous on compact intervals, let
$f,f'\in L^1(0,\infty)$, and suppose that $f(0+)$ exists. Then, as
$h\downarrow0$,
\[
\sum_{n=1}^{\infty}f(nh)
=
\frac1h\int_0^\infty f(u)\,du-\frac{f(0+)}2+o(1).
\]
\end{lem}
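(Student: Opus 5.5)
The plan is to compare the sum with the integral cell by cell, writing the error as an integral of $f'$ against a bounded kernel. For each $n\ge1$ and $u\in[(n-1)h,nh]$, absolute continuity gives
\[
f(nh)-f(u)=\int_u^{nh}f'(s)\,ds .
\]
Averaging this over the cell and exchanging the order of integration (Fubini, justified since $f'\in L^1$) gives
\[
h f(nh)-\int_{(n-1)h}^{nh}f(u)\,du=\int_{(n-1)h}^{nh}\bigl(s-(n-1)h\bigr)f'(s)\,ds .
\]
Summing over $n$ then yields the exact identity
\[
h\sum_{n=1}^\infty f(nh)-\int_0^\infty f(u)\,du=h\int_0^\infty \{s/h\}\,f'(s)\,ds ,
\]
where $\{\cdot\}$ denotes the fractional part. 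To justify the summation, note that $f,f'\in L^1$ imply $f(u)\to0$ as $u\to\infty$ and that $f$ is bounded. Moreover $\sum_n h|f(nh)|<\infty$, because $h|f(nh)|\le\int_{\text{cell}}|f|+h\int_{\text{cell}}|f'|$.

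Dividing by $h$, it remains to show that
\[
\int_0^\infty\{s/h\}f'(s)\,ds\to\frac12\int_0^\infty f'(s)\,ds=-\frac{f(0+)}2 .
\]
The equality on the right uses $f(+\infty)=0$ and the existence of $f(0+)$; absolute continuity on compact subintervals of $(0,\infty)$, together with $f'\in L^1$ near $0$, gives $\int_0^\infty f'=-f(0+)$. The convergence on the left is a Riemann–Lebesgue-type averaging statement for the $1$-periodic bounded function $\{x\}$, whose mean is $\tfrac12$. I would prove it exactly as in Lemma~\ref{lem:sign-mean}: verify it directly for compactly supported step functions in place of $f'$, where each interval contributes $\tfrac12|I|+O(h)$, and then extend by density in $L^1(0,\infty)$, using $|\{x\}|\le1$ to control the approximation error uniformly in $h$.

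Combining the two steps gives
\[
\sum f(nh)=\frac1h\int_0^\infty f\,du-\frac{f(0+)}2+o(1).
\]
The only delicate point is the behaviour at $0$: $f$ need not be absolutely continuous up to $0$ (as for $f'=O(u^{-1/3})$ in the application). I would handle this by working on $[\varepsilon,\infty)$ and letting $\varepsilon\downarrow0$, or simply by noting that $f'\in L^1(0,1)$ already makes $f$ absolutely continuous on $[0,1]$ once we set $f(0)=f(0+)$. I do not expect a genuine obstacle; the density step is routine.
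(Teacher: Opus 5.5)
Your proposal is correct and follows the paper's proof. Your cell-by-cell identity is the first-order Euler--Maclaurin formula the paper invokes: $\{s/h\}=B_1(\{s/h\})+\tfrac12$ and $\int_0^\infty f'=-f(0+)$. Your step-function-plus-density argument for the oscillating kernel is also the same as the paper's; you just derive the formula and justify the summation explicitly rather than citing them.
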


\begin{proof}
The first Euler--Maclaurin formula gives
\[
\sum_{n=1}^{\infty}f(nh)
=
\frac1h\int_0^\infty f(u)\,du-\frac{f(0+)}2
+\int_0^\infty B_1(\{u/h\})f'(u)\,du,
\]
where $B_1(s)=s-1/2$ on $(0,1)$ and is extended periodically. The last
integral tends to zero. This is first immediate for step $L^1$ functions in
place of $f'$, since $B_1$ has mean zero, and then follows for $f'$ by density
and boundedness of~$B_1$.
\end{proof}

\begin{proof}[Proof of Lemma~\ref{lem:D0}]
From the definitions~\eqref{eq:D0} and \eqref{eq:f-def},
\[
\log D_d^0=\sum_{n=1}^{\infty}f(n/d).
\]
Apply Lemma~\ref{lem:Euler} with $h=1/d$, and then use
\eqref{eq:f-integral} and \eqref{eq:f0}:
\[
\log D_d^0
=d\log\frac4\pi-\log\frac\pi2+o(1)
=d\log\frac4\pi+\log\frac2\pi+o(1).
\]
Exponentiating, we obtain~\eqref{eq:D0-asymp}.
\end{proof}

\subsection{Uniform Bessel phase}
It remains to compare the exact Bessel nodes $\gamma_{d,n}$ with the quantile
nodes $\rho_{d,n}$. For $t\ge1$, put
\begin{equation}\label{eq:action-A}
A(t)=
\sqrt{t^2-1}-\arccos(1/t)
=
\pi M(t).
\end{equation}
For $\nu=d/2$, define the phase deviation
\begin{equation}\label{eq:rnu}
r_\nu(t)=
\theta_\nu(\nu t)+\frac\pi4-\nu A(t),
\quad t>1.
\end{equation}

\begin{lem}
There exists a constant $C>0$ such that for $\nu\ge1/2$:

\begin{enumerate}[\rm (i)]
\item for all $t>1$,
\begin{equation}\label{eq:r-global}
-\frac Ct\le r_\nu(t)<0;
\end{equation}

\item for any $1<t_0<t_1<\infty$,
\begin{equation}\label{eq:r-bulk}
\sup_{t_0\le t\le t_1}|r_\nu(t)|\to0,
\quad \nu\to\infty.
\end{equation}
\end{enumerate}
\end{lem}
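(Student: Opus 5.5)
The plan is to prove (i) by a monotonicity argument in $t$, and (ii) by the Debye (Olver) phase asymptotics.

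Put $z=\nu t$. Using $A'(t)=\sqrt{t^2-1}/t$ and $\theta_\nu'(z)=2/(\pi zM_\nu(z)^2)$, we get
\[
r_\nu'(t)=\nu\theta_\nu'(\nu t)-\nu A'(t)
=\frac{\sqrt{z^2-\nu^2}}{t}\Bigl(\frac1{g_\nu(z)}-1\Bigr),
\qquad g_\nu(z)=\frac\pi2\,M_\nu(z)^2\sqrt{z^2-\nu^2}.
\]
So the sign of $r_\nu'$ is governed by whether $g_\nu<1$. I would invoke the classical inequality going back to Nicholson's formula
\[
M_\nu(z)^2=\frac{8}{\pi^2}\int_0^\infty K_0(2z\sinh s)\cosh(2\nu s)\,ds
\]
(Watson \S13.74, Lorch--Muldoon). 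It states that for $\nu>1/2$ the function $g_\nu(z)$ increases on $(\nu,\infty)$ to the limit $1$. The case $\nu=1/2$ is trivial, since there $M^2=2/(\pi z)$ and everything is explicit. Hence $r_\nu'(t)>0$ on $(1,\infty)$.

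Next I would compute the limit at infinity. The large-$z$ expansions are
\[
\theta_\nu(z)=z-\Bigl(\frac\nu2+\frac14\Bigr)\pi+\frac{4\nu^2-1}{8z}+O_\nu(z^{-3}),
\qquad
\nu A(z/\nu)=\sqrt{z^2-\nu^2}-\nu\arccos(\nu/z)=z-\frac{\nu\pi}2+\frac{\nu^2}{2z}+O_\nu(z^{-3}).
\]
Subtracting gives $r_\nu(t)=-1/(8\nu t)+O_\nu(t^{-3})\to0$. Since $r_\nu$ is strictly increasing, $r_\nu<0$ for all $t>1$, which is the right inequality in (i). Monotonicity also gives a uniform bound from below:
\[
r_\nu(t)>r_\nu(1+)=\theta_\nu(\nu)+\frac\pi4>-\frac\pi4,
\]
because $\theta_\nu(\nu)\in(-\pi/2,0)$, as noted before the definition of $\gamma_{d,n}$. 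This already handles $1<t\le2$ with $C=\pi/2$.

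For $t\ge2$ I need $|r_\nu(t)|\le C/t$ with $C$ independent of $\nu$, and this is the main obstacle. I would write
\[
r_\nu(t)=-\int_t^\infty r_\nu'(s)\,ds
\]
and bound $1/g_\nu-1\le C/(z^2-\nu^2)$ uniformly in $\nu\ge1/2$ for $z\ge2\nu$. First I would try to get this from Nicholson's integral: expand $\cosh(2\nu s)$ against $K_0$ and use the explicit moments $\int_0^\infty K_0(2z\sinh s)\,s^{2k}\cdots ds$. Alternatively, I would use Olver's error bounds for the Liouville--Green approximation of the normal-form Bessel equation with its potential $1-(\nu^2-1/4)/z^2$; these give the amplitude relation $g_\nu=1+O\bigl(1/(z^2-\nu^2)\bigr)$ in the region $z\ge2\nu$, uniformly in $\nu$. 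Either way,
\[
0<r_\nu'(s)\le \frac{C\sqrt{z^2-\nu^2}}{s\,(z^2-\nu^2)}\le\frac{C}{\nu s^2},
\]
and integrating over $(t,\infty)$ gives $|r_\nu(t)|\le C/(\nu t)\le 2C/t$. Together with the bound on $(1,2]$ this proves (i).

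For (ii), fix $1<t_0\le t\le t_1$ and write $t=\sec\beta$ with $\beta$ in a compact subset of $(0,\pi/2)$. The Debye expansions (DLMF 10.19.6, or 10.20 uniformly) give
\[
J_\nu+iY_\nu\sim\sqrt{\frac{2}{\pi\nu\tan\beta}}\,e^{i(\nu(\tan\beta-\beta)-\pi/4)}\bigl(1+O(\nu^{-1})\bigr)
\]
uniformly there. Hence $\theta_\nu(\nu t)=\nu(\tan\beta-\beta)-\pi/4+O(\nu^{-1})$ modulo $2\pi$. Since $\nu A(t)=\nu(\tan\beta-\beta)$, this says $r_\nu(t)=O(\nu^{-1})$ modulo $2\pi$. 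Continuity in $t$ together with the bound (i) fixes the branch for large $\nu$, which yields (ii).
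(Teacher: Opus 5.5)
\textbf{Verdict.} Your proposal is correct, and it takes a genuinely different route from the paper.

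\textbf{The paper's route.} The paper imports one sharp two-sided result, the Filonov--Levitin--Polterovich--Sher enclosure, together with $\theta_\nu>-\pi/2$. After $x=\nu t$ this reads $0<-r_\nu(t)<\min\{\frac{3t^2+2}{24\nu(t^2-1)^{3/2}},\,\nu A(t)+\frac\pi4\}$. In one stroke it gives the sign, the $C/t$ bound for $t\ge2$, and (ii) with an explicit $O_{t_0,t_1}(\nu^{-1})$ rate. Near $t=1$ only the boundedness of $(3q^2+5)A/q^3$ is needed.

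\textbf{Your route.} You get the sign from $r_\nu'>0$ and $r_\nu(+\infty)=0$. As a bonus this gives monotonicity of $r_\nu$ and the global bound $-\pi/4<r_\nu<0$, with no analysis at the turning point. You then get (ii) separately from Debye, fixing the branch with that bound. The cost is that the tail $t\ge2$ needs an extra quantitative amplitude estimate, and that is the one step you only sketch.

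\textbf{Remarks on the key steps.} You never need $g_\nu$ to be increasing, only $g_\nu<1$, and that follows immediately from Nicholson's formula for every $\nu\ge0$. Since $\sinh s>s$ and $K_0$ decreases,
\[
M_\nu^2(z)<\frac8{\pi^2}\int_0^\infty K_0(2zs)\cosh(2\nu s)\,ds=\frac2{\pi\sqrt{z^2-\nu^2}},
\]
using $\int_0^\infty K_0(u)\cosh(au)\,du=\frac{\pi}{2\sqrt{1-a^2}}$.

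The same comparison completes the tail. Use $K_0(a)-K_0(b)\le(b-a)K_1(a)$ and $\sinh s-s\le\frac{s^3}{6}\cosh s$. Then the deficit $\frac2{\pi\sqrt{z^2-\nu^2}}-M_\nu^2(z)$ is $O(z^{-3})$ uniformly whenever $2\nu+1\le\frac32z$. This gives $1-g_\nu\le C/(z^2-\nu^2)$ for $z\ge2\nu$ outside the compact set $\{\frac12\le\nu\le1,\ 2\nu\le z\le\nu+1\}$, where continuity suffices. To pass from $1-g_\nu$ to $1/g_\nu-1$, use $g_\nu\ge\sqrt{1-\nu^2/z^2}\ge\frac{\sqrt3}2$, which follows from $zM_\nu^2(z)\ge\frac2\pi$.

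By contrast, plain Liouville--Green error bounds give only $1-g_\nu=O(\nu^2z^{-3}+z^{-2})$ at $z\asymp\nu$, not the estimate you state. That weaker bound still integrates to $O((\nu t)^{-1}+t^{-2})=O(t^{-1})$, so your conclusion holds either way.
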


\begin{proof}
Put
\[
\widetilde\theta_\nu(x)=
\sqrt{x^2-\nu^2}
-\nu\arccos\frac{\nu}{x}
-\frac\pi4.
\]
The uniform enclosure for the Bessel phase \cite[Theorem~1.4]{Fi24} gives, for
$x>\nu\ge0$,
\[
\max\,\Bigl\{
\widetilde\theta_\nu(x)
-\frac{3x^2+2\nu^2}{24(x^2-\nu^2)^{3/2}},
-\frac\pi2
\Bigr\}
<
\theta_\nu(x)
<
\widetilde\theta_\nu(x).
\]
Substituting $x=\nu t$ and using~\eqref{eq:action-A}, we get
\begin{equation}\label{eq:r-enclosure}
0<-r_\nu(t)<
\min\,\Bigl\{
\frac{3t^2+2}{24\nu(t^2-1)^{3/2}},
\ \nu A(t)+\frac\pi4
\Bigr\}.
\end{equation}

To obtain~\eqref{eq:r-global}, put $q=\sqrt{t^2-1}$. Then
$A(t)=q-\arctan q$, and the function
\[
\frac{(3q^2+5)A(t)}{q^3}
\]
is bounded for $q>0$. Therefore, with $y=\nu A(t)$, the first term in the
minimum in~\eqref{eq:r-enclosure} is bounded by $C/y$, and
\[
0<-r_\nu(t)
\le
\min\Bigl\{y+\frac\pi4,\frac Cy\Bigr\}
\le C.
\]
For $1<t<2$, after increasing $C$, this gives
\[
0<-r_\nu(t)\le\frac Ct.
\]
For $t\ge2$, the first quantity under the minimum in
\eqref{eq:r-enclosure} does not exceed $C/(\nu t)\le C/t$. Together with the
estimate for $1<t<2$, this proves~\eqref{eq:r-global}.

On each interval $[t_{0},t_{1}]$, the first term in
\eqref{eq:r-enclosure} is $O_{t_0,t_1}(\nu^{-1})$, so
\[
\sup_{t_{0}\le t\le t_{1}}|r_\nu(t)|\to0,
\]
which gives~\eqref{eq:r-bulk}.
\end{proof}

Define
\begin{equation}\label{eq:epsilon}
\varepsilon_{d,n}
=dM\Bigl(\frac{\gamma_{d,n}}d\Bigr)-n.
\end{equation}
Since $\nu=d/2$ and, by~\eqref{eq:beta-def},
$P_d(\gamma_{d,n})=\pi n/2$, formula~\eqref{eq:rnu} gives the exact equality
\begin{equation}\label{eq:epsilon-r}
\varepsilon_{d,n}
=-\frac2\pi\,r_{d/2}\Bigl(\frac{\gamma_{d,n}}d\Bigr).
\end{equation}
In particular, by~\eqref{eq:r-global},
\begin{equation}\label{eq:epsilon-uniform}
0\le\varepsilon_{d,n}\le C
\end{equation}
uniformly for $d,n\ge1$.

\begin{lem}\label{lem:epsilon-l2}
As $d\to\infty$,
\[
\sum_{n=1}^{\infty}\varepsilon_{d,n}^2=o(d).
\]
\end{lem}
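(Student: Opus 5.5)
The plan is to split the sum according to the size of $t_{d,n}=\gamma_{d,n}/d$. Throughout we use the exact identity \eqref{eq:epsilon-r}, the two bounds \eqref{eq:r-global} and \eqref{eq:r-bulk}, and the uniform bound \eqref{eq:epsilon-uniform}.

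Fix $1<t_0<t_1<\infty$ and split the indices $n$ into three groups:
\[
\mathcal A=\{n\colon \gamma_{d,n}\le dt_0\},\quad
\mathcal B=\{n\colon dt_0<\gamma_{d,n}\le dt_1\},\quad
\mathcal C=\{n\colon \gamma_{d,n}>dt_1\}.
\]

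The sizes of the groups come from the counting function. By \eqref{eq:epsilon} and \eqref{eq:epsilon-uniform}, the number of $n$ with $\gamma_{d,n}\le dt$ is $dM(t)+O(1)$. This is so because $n=dM(\gamma_{d,n}/d)-\varepsilon_{d,n}$, $M$ is increasing, and $\varepsilon_{d,n}\in[0,C]$. Hence $|\mathcal A|\le dM(t_0)+O(1)$ and $|\mathcal B|\le dM(t_1)+O(1)$. Also, since $\gamma_{d,1}>d$, all $t_{d,n}$ exceed $1$, so \eqref{eq:r-global} applies to every $n$.

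On $\mathcal A$ we use only $\varepsilon_{d,n}\le C$. This gives
\[
\sum_{n\in\mathcal A}\varepsilon_{d,n}^2\le C^2\bigl(dM(t_0)+O(1)\bigr),
\]
and $M(t_0)\to0$ as $t_0\downarrow1$. On $\mathcal B$ we use \eqref{eq:r-bulk}. It gives $\max_{n\in\mathcal B}\varepsilon_{d,n}=o(1)$ as $d\to\infty$, so this part of the sum is $o(1)\cdot O_{t_1}(d)=o(d)$.

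The tail $\mathcal C$ is the main obstacle, since it contains infinitely many terms. Here \eqref{eq:r-global} gives
\[
\varepsilon_{d,n}\le \frac{2C}{\pi}\,\frac{d}{\gamma_{d,n}}.
\]
For $n\in\mathcal C$, the relation $n=dM(t_{d,n})-\varepsilon_{d,n}$ together with $M(t)\le t/\pi$ gives $\gamma_{d,n}\ge \pi(n-C)$. The $\gamma_{d,n}$ are also strictly increasing and exceed $dt_1$. Comparing with an integral, or using the counting estimate $\#\{n\colon\gamma_{d,n}\le x\}\le x/\pi+O(1)$, we get
\[
\sum_{n\in\mathcal C}\frac{d^2}{\gamma_{d,n}^2}\le C' d^2\int_{dt_1}^\infty\frac{dx}{x^2}+O(1)=\frac{C'd}{t_1}+O(1).
\]
The $O(1)$ accounts for the bounded overlap.

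Combining the three groups,
\[
\limsup_{d\to\infty}\frac1d\sum_{n\ge1}\varepsilon_{d,n}^2\le C^2M(t_0)+\frac{C'}{t_1}.
\]
Letting $t_0\downarrow1$ and $t_1\to\infty$ gives the claim. The only delicate point is the tail estimate. It needs \eqref{eq:r-global} in its full $1/t$ form; mere boundedness of $\varepsilon_{d,n}$ would not suffice there.
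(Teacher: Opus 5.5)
Your proof is correct and follows the paper's argument: you split into three regimes. Near the edge you use only the uniform bound on $\varepsilon_{d,n}$, in the bulk you use \eqref{eq:r-bulk}, and in the tail you use the $1/t$ decay in \eqref{eq:r-global}. The only difference is cosmetic: you split by the location $\gamma_{d,n}/d$ rather than by the index $n$ (the paper uses $n<\delta d$, $\delta d\le n\le Ad$, $n>Ad$), so the paper's compactness argument is replaced by your counting estimate $\#\{n\colon\gamma_{d,n}\le dt\}\le dM(t)$.
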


\begin{proof}
Fix $0<\delta<1<A$. If $\delta d\le n\le Ad$, then by
\eqref{eq:epsilon-uniform} and the definition of $\varepsilon_{d,n}$, the
quantities $M(\gamma_{d,n}/d)$ remain in a fixed compact subset of
$(0,\infty)$; hence $\gamma_{d,n}/d$ remain in a compact subset of
$(1,\infty)$. By~\eqref{eq:r-bulk},
\[
\max_{\delta d\le n\le Ad}\varepsilon_{d,n}=o(1).
\]

For $n\ge Ad$ and sufficiently large $A$, from
\[
n=dM(t)+O(1),
\quad
t=\frac{\gamma_{d,n}}d,
\]
and the asymptotics $M(t)\asymp t$ for $t\ge2$, it follows that
$t\asymp n/d$. Hence, by~\eqref{eq:r-global} and \eqref{eq:epsilon-r},
\[
0\le\varepsilon_{d,n}\le C\,\frac d n,
\quad n\ge Ad.
\]
Now
\[
\frac1d\sum_{n\ge1}\varepsilon_{d,n}^2
\le C\delta
+\frac1d\sum_{\delta d\le n\le Ad}o(1)
+Cd\sum_{n\ge Ad}\frac1{n^2}
\le C\delta+o(1)+\frac CA.
\]
First let $d\to\infty$, then $\delta\downarrow0$ and $A\to\infty$.
\end{proof}

\subsection{Asymptotics of the product $\widetilde D_{d}$}
From~\eqref{eq:rho-quantization} and \eqref{eq:epsilon},
\[
\rho_{d,n}=dT(n/d),
\quad
\gamma_{d,n}=dT\Bigl(\frac{n+\varepsilon_{d,n}}d\Bigr).
\]

For $\xi>0$, put
\[
L_d(\xi)=\log\bigl(dT(\xi/d)\bigr).
\]
Differentiating $M(T(u))=u$ and using
$M'(t)=\pi^{-1}\sqrt{1-t^{-2}}$, we obtain the exact formula
\[
L_d'(\xi)
=
\frac{\pi}{d\sqrt{T(\xi/d)^2-1}}.
\]

Put
\[
w_{d,n}=
\begin{cases}
d^{-2/3}n^{-1/3}, & 1\le n\le d,\\
n^{-1}, & n>d.
\end{cases}
\]
Then
\begin{equation}\label{eq:weight-bound}
0\le
\log\frac{\gamma_{d,n}}{\rho_{d,n}}
\le
C\varepsilon_{d,n}w_{d,n}.
\end{equation}

Indeed, since $\varepsilon_{d,n}\ge0$, the mean value theorem gives
\[
\log\frac{\gamma_{d,n}}{\rho_{d,n}}
=
\varepsilon_{d,n}L_d'(\xi_{d,n}),
\quad
n\le\xi_{d,n}\le n+\varepsilon_{d,n}.
\]
By~\eqref{eq:T-edge} and continuity of $T$,
\[
T(u)^2-1\ge c u^{2/3},
\quad 0<u\le2.
\]
Therefore, for $1\le n\le d$ and all sufficiently large $d$,
\[
L_d'(\xi_{d,n})
\le
C d^{-2/3}\xi_{d,n}^{-1/3}
\le
C d^{-2/3}n^{-1/3}.
\]
For $n>d$, the function
\[
u\mapsto \frac{\pi u}{\sqrt{T(u)^2-1}}
\]
is bounded on $[1,\infty)$, and hence
\[
L_d'(\xi_{d,n})
\le
\frac C{\xi_{d,n}}
\le
\frac Cn.
\]
This proves~\eqref{eq:weight-bound}.

Moreover,
\begin{equation}\label{eq:weights-l2}
\sum_{n=1}^{\infty}w_{d,n}^2
\le
d^{-4/3}\sum_{n\le d}n^{-2/3}
+
\sum_{n>d}n^{-2}
=
O(d^{-1}).
\end{equation}

\begin{lem}\label{lem:DB-asymp}
As $d\to\infty$,
\[
\widetilde D_d=
\Bigl(\frac2\pi+o(1)\Bigr)
\Bigl(\frac4\pi\Bigr)^d.
\]
\end{lem}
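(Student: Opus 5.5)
We compare $\widetilde D_d$ with the model product $D_d^0$ of Lemma~\ref{lem:D0}. By \eqref{eq:DB} and \eqref{eq:D0},
\[
\log\widetilde D_d=\log D_d^0-2\sum_{n=1}^{\infty}\log\frac{\gamma_{d,n}}{\rho_{d,n}},
\]
and both series converge absolutely. The first series converges by \eqref{eq:beta-tail}. The second converges because $\rho_{d,n}=\pi(n+d/2)+O_d(n^{-1})$, which follows from \eqref{eq:T-infty}. It therefore suffices to show that
\[
S_d=\sum_{n=1}^{\infty}\log\frac{\gamma_{d,n}}{\rho_{d,n}}\to0,\quad d\to\infty.
\]
Once this is established, Lemma~\ref{lem:D0} gives $\widetilde D_d=(2/\pi+o(1))(4/\pi)^d$ directly.

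To prove $S_d\to0$, we use the termwise bound \eqref{eq:weight-bound}: each term is nonnegative and at most $C\varepsilon_{d,n}w_{d,n}$. Summing and applying Cauchy--Schwarz gives
\[
0\le S_d\le C\Bigl(\sum_{n\ge1}\varepsilon_{d,n}^2\Bigr)^{1/2}\Bigl(\sum_{n\ge1}w_{d,n}^2\Bigr)^{1/2}.
\]
Lemma~\ref{lem:epsilon-l2} bounds the first factor by $o(d)^{1/2}$, and \eqref{eq:weights-l2} bounds the second by $O(d^{-1/2})$. Hence $S_d=o(1)$.

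The main work has already been done in the preceding lemmas: the phase enclosure for $r_\nu$, which yields $\varepsilon_{d,n}\ge0$ and its uniform bound, and the weight estimate \eqref{eq:weight-bound} near the turning point, where $L_d'$ blows up like $u^{-1/3}$. One point needs checking: \eqref{eq:weight-bound} was derived for sufficiently large $d$, with the mean-value point satisfying $\xi_{d,n}\le n+C\le 2d$ in the range $n\le d$. This is legitimate here, since only large $d$ matter for the asymptotics.
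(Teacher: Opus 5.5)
Your proposal is correct and matches the paper's proof essentially step for step. It uses the identity $-\log(\widetilde D_d/D_d^0)=2\sum_{n}\log(\gamma_{d,n}/\rho_{d,n})$, the termwise bound \eqref{eq:weight-bound}, Cauchy--Schwarz with Lemma~\ref{lem:epsilon-l2} and \eqref{eq:weights-l2}, and then Lemma~\ref{lem:D0}; your side remarks on the absolute convergence of both products and on the large-$d$ restriction in \eqref{eq:weight-bound} are accurate.
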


\begin{proof}
From~\eqref{eq:DB} and \eqref{eq:D0},
\[
-\log\frac{\widetilde D_d}{D_d^0}
=
2\sum_{n=1}^{\infty}
\log\frac{\gamma_{d,n}}{\rho_{d,n}}.
\]
By~\eqref{eq:weight-bound}, the Cauchy--Schwarz inequality,
Lemma~\ref{lem:epsilon-l2}, and~\eqref{eq:weights-l2},
\[
0\le
-\log\frac{\widetilde D_d}{D_d^0}
\le
C
\biggl(\,\sum_{n\ge1}\varepsilon_{d,n}^2\biggr)^{1/2}
\biggl(\,\sum_{n\ge1}w_{d,n}^2\biggr)^{1/2}
=o(1).
\]
Hence
\[
\widetilde D_d=D_d^0(1+o(1)),
\]
and the statement follows from Lemma~\ref{lem:D0}.
\end{proof}

\subsection{Completion of the upper estimate}
From~\eqref{eq:Dcompare}, \eqref{eq:L-via-D}, and
Lemma~\ref{lem:DB-asymp}, we obtain
\[
2^d\Ld(d)
=
\frac{(4/\pi)^d}{D_d}
\le
\frac{(4/\pi)^d}{\widetilde D_d}
=
\frac\pi2+o(1).
\]
Thus
\[
\limsup_{d\to\infty}2^d\Ld(d)\le\frac\pi2.
\]

Together with the lower estimate~\eqref{eq:lower-second}, this gives
\[
2^d\Ld(d)\to\frac\pi2,
\]
which completes the proof of Theorem~\ref{thm:main}.

\subsection*{Acknowledgments}
The author is grateful to the artificial intelligence model used in preparing
this paper for its assistance.


\begin{thebibliography}{99}

\bibitem{Da21}
F.~Dai, D.~Gorbachev, S.~Tikhonov,
\textit{Estimates of the asymptotic Nikolskii constants for spherical
polynomials}, J.~Complexity \textbf{65} (2021), 101553.

\bibitem{Fi24}
N.~Filonov, M.~Levitin, I.~Polterovich, D.~A.~Sher, \textit{Uniform enclosures
for the phase and zeros of Bessel functions and their derivatives}, SIAM J.
Math. Anal. \textbf{56} (2024), no.~6, 7644--7682.

\bibitem{Gon26}
F.~Gon\c{c}alves, D.~Radchenko, A.~Pedro Ramos, \textit{The
H\"ormander--Bernhardsson function in higher dimensions}, arXiv:2608.22198,
2026.

\bibitem{Go26a}
D.~V.~Gorbachev, \textit{The Nikolskii constant in odd dimensions},
arXiv:2608.15674, 2026.

\bibitem{Go26}
D.~V.~Gorbachev, \textit{The Nikolskii Constant in Arbitrary Dimension},
arXiv:2608.22578, 2026.

\bibitem{Gr08}
L.~Grafakos, \textit{Classical Fourier Analysis}, 2nd ed. Graduate Texts in
Mathematics, vol.~249. Springer, New York, 2008.

\bibitem{Ha02}
P.~Hartman, \emph{Ordinary Differential Equations}, 2nd ed., Classics in
Applied Mathematics, vol.~38, SIAM, Philadelphia, PA, 2002.

\bibitem{DLMF}
NIST Digital Library of Mathematical Functions, https://dlmf.nist.gov

\bibitem{Ol97}
F.~W.~J.~Olver, \textit{Asymptotics and Special Functions}, A K Peters,
Wellesley, MA, 1997.

\bibitem{RS02}
Q.~I.~Rahman and G.~Schmeisser, \textit{Analytic Theory of Polynomials}, London
Mathematical Society Monographs, New Series, vol.~26, Clarendon Press, Oxford,
2002.

\bibitem{Wa66}
G.~N.~Watson, \textit{A Treatise on the Theory of Bessel Functions}, 2nd ed.,
Cambridge University Press, Cambridge, 1966.

\end{thebibliography}
\end{document}